\documentclass[11pt]{amsart}

\usepackage[margin=1in]{geometry}
\usepackage{amsmath,amssymb,amsthm,mathtools}
\numberwithin{equation}{section}
\usepackage{booktabs}
\usepackage{microtype}
\usepackage[hidelinks]{hyperref}
\newtheorem{theorem}{Theorem}[section]
\newtheorem{lemma}[theorem]{Lemma}
\newtheorem{proposition}[theorem]{Proposition}
\newtheorem{corollary}[theorem]{Corollary}
\newtheorem{remark}[theorem]{Remark}

\newcommand{\T}{\mathbb T}
\newcommand{\R}{\mathbb R}
\newcommand{\Z}{\mathbb Z}
\newcommand{\Q}{\mathbb Q}
\newcommand{\pr}{\operatorname{pr}}
\newcommand{\rot}{\rho}

\title[Minimal sets and irrational circle factors]
{Minimal sets for torus homeomorphisms with an irrational circle factor}
\author{Xiao-Chuan Liu}
\address[Liu]{Departamento de Matem\'atica,
Universidade Federal de Pernambuco,
Avenida Jornalista An\'ibal Fernandes - Cidade Universit\'aria, Recife, Brazil}
\email{xiaochuan.liu@ufpe.br}
\subjclass[2020]{37E30, 37E45}
\keywords{torus homeomorphism, minimal set, circle factor, semiconjugacy, Dehn twist}
\date{}

\begin{document}

\begin{abstract}
We study minimal sets of torus homeomorphisms admitting an irrational circle factor whose fibres are thin
essential annular continua. For totally irrational pseudo-rotations and homeomorphisms in a nontrivial
Dehn-twist class, we prove uniqueness of the minimal set when the fibres are Jordan curves on a residual set of
base parameters. The same conclusion holds if the fibre cores are Jordan curves, or if the fibres are locally
connected, on a nonmeagre set of parameters. The residual hypothesis cannot be replaced by a full-measure
hypothesis, even under area preservation and topological transitivity. For every totally irrational rotation
vector, we construct such a map with Jordan-curve fibres almost everywhere and uncountably many pairwise disjoint
uniquely ergodic minimal Cantor sets. We also construct examples in every nontrivial Dehn-twist class, with
prescribed irrational vertical rotation number and bounded deviations. In both families, the Jordan-curve
parameters form a meagre set of full Lebesgue measure.
\end{abstract}

\maketitle

\section{Introduction}

An orientation-preserving circle homeomorphism with irrational rotation number has a unique minimal set.
For a torus homeomorphism with an irrational circle factor, we ask how the topology of the fibres affects
uniqueness. We consider totally irrational pseudo-rotations and homeomorphisms in nontrivial Dehn-twist classes,
with every fibre a thin essential annular continuum. In both classes, Jordan-curve fibres on a residual set of
parameters force a unique minimal set. We also construct area-preserving, topologically transitive examples with
Jordan-curve fibres almost everywhere and uncountably many minimal Cantor sets. Together, these results give a
sharp distinction between measure and category.

Write $\T^d=\R^d/\Z^d$, and let $\pr_i$ denote the coordinate projections. A vector $(\alpha,\beta)$ is
\textbf{totally irrational} if $1,\alpha,\beta$ are linearly independent over $\Q$. A \textbf{totally irrational
pseudo-rotation} is a homeomorphism $f\in\operatorname{Homeo}_0(\T^2)$, isotopic to the identity, whose rotation set
is a single totally irrational vector. More precisely, there are a lift $F:\R^2\to\R^2$ of $f$ and a totally
irrational vector $(\alpha,\beta)$ such that
\begin{equation}
 \lim_{n\to\infty}\sup_{z\in\R^2}
 \left\|\frac{F^n(z)-z}{n}-(\alpha,\beta)\right\|=0.
\end{equation}

Rotation vectors describe average displacements of lifted orbits. We give the precise definitions in
Section~\ref{sec:preliminaries}. Potrie proved that every nonwandering map in this class is topologically
transitive \cite[Corollary~1.1]{Potrie2012}.
J{\"a}ger, Kwakkel and Passeggi classified the possible proper minimal sets of nonwandering torus homeomorphisms.
In the totally irrational nonwandering case, every proper minimal set is an extension of a Cantor set
\cite[Corollary~5(a)]{JagerKwakkelPasseggi2013}: a torus semiconjugacy homotopic to the identity maps it onto a
Cantor minimal set. The question whether the minimal set must be unique was raised by
Kwakkel \cite[Question~1]{Kwakkel2011} and discussed by Potrie \cite[Introduction, p.~3974]{Potrie2012}.
We study coexistence of minimal sets and conditions on the fibres which rule it out.

We consider maps admitting an \textbf{irrational circle factor}: a continuous surjection $h:\T^2\to\T^1$ satisfying,
for some $\alpha\in\R\setminus\Q$,
\begin{equation}\label{eq:semiconjugacy}
 h\circ f=R_\alpha\circ h,\qquad R_\alpha(\theta)=\theta+\alpha.
\end{equation}
In the identity class we take $h$ homotopic to $\pr_1$. A fibre is \textbf{thin} if it has empty interior. An
\textbf{essential annular continuum} is a compact connected subset of the torus whose complement is an open annulus.

An unpublished example of Avila \cite{AvilaPrivateCommunication}, which has circulated among specialists, gives a
negative answer to the general uniqueness question. For any prescribed totally irrational rotation vector, it has uncountably many pairwise disjoint uniquely
ergodic minimal Cantor sets. Over two rotation orbits, the fibres of its circle factor are closed annuli with
nonempty interior; every other fibre is a Jordan curve. We give the construction and its proof in
Proposition~\ref{prop:denjoy-model}.

The interiors of these annuli are wandering, so Avila's model does not preserve area. Conservative examples with
singular thin fibres were constructed by B{\'e}guin, Crovisier and J{\"a}ger \cite{BeguinCrovisierJager2017}:
their real-analytic area-preserving pseudo-rotation is minimal, and every fibre of its circle factor is a
pseudo-circle. Our first theorem gives uncountably many minimal Cantor sets in an area-preserving map while
retaining Jordan fibres almost everywhere. The rotation vector can be any prescribed totally irrational vector.

\begin{theorem}\label{thm:thin-counterexample}
Let $(\alpha,\beta)\in\R^2$ be totally irrational. There exist an area-preserving, topologically transitive
homeomorphism $f\in\operatorname{Homeo}_0(\T^2)$, a lift $F$, and a continuous surjection
$h:\T^2\to\T^1$ homotopic to $\pr_1$ such that
\begin{equation}\label{eq:main-counterexample}
 \rot(F)=\{(\alpha,\beta)\},\qquad h\circ f=R_\alpha\circ h.
\end{equation}
Every fibre of $h$ is a thin essential annular continuum. The parameters whose fibres are Jordan curves have full
Lebesgue measure. Moreover, $f$ has uncountably many pairwise disjoint uniquely ergodic minimal Cantor sets.
\end{theorem}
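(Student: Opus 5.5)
We construct $f$ as a skew product over $R_\alpha$,
\[
 f(\theta,y)=(\theta+\alpha,\ \phi_\theta(y)),
\]
where $\phi_\theta$ is a continuous family of circle homeomorphisms. We then conjugate by a measure-preserving modification that makes the map area-preserving and transitive without destroying the factor $h=\pr_1$ (up to homotopy).

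\textbf{Step 1: a fibred Denjoy-type skew product.} Start from the model of Proposition~\ref{prop:denjoy-model}, but replace the blown-up annuli by thin objects. Fix a Cantor set $K\subset\T^1$ of positive measure. Take a fibre-preserving extension of the rigid rotation $(\theta,y)\mapsto(\theta+\alpha,y+\beta)$ in which each vertical circle over $\theta$ carries circle maps $\phi_\theta$. Choose them, following the standard construction of non-uniquely-ergodic or non-minimal skew products (Fayad's and Herman's examples), so that there is a Cantor family of invariant graphs $y=\gamma_c(\theta)$, $c\in C$, with $C$ a Cantor set of parameters. Here each $\gamma_c$ is continuous only off a meagre, full-measure-complement set. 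This is the key point: the closures $\overline{\operatorname{graph}\gamma_c}$ are the candidate minimal sets. They must be pairwise disjoint, which requires the graphs to be separated uniformly. Unique ergodicity of each follows because each carries a unique invariant measure projecting to Lebesgue, namely the graph measure, since $R_\alpha$ is uniquely ergodic and the minimal set projects injectively off a null set.

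\textbf{Step 2: fibres and the factor.} Since $f$ is a skew product, $h=\pr_1$ semiconjugates $f$ to $R_\alpha$. But then every fibre is a circle, and the minimal sets would be graphs of functions that are discontinuous on a residual set. To obtain Cantor minimal sets that are \emph{not} graphs, instead choose the semiconjugacy $h$ to be a collapse: begin with a genuinely fibred Anosov--Katok type limit $f=\lim H_n^{-1}R_{(\alpha,\beta)}H_n$, with $H_n$ area-preserving and each $H_n$ preserving the horizontal coordinate $\pr_1$ up to small error. Conjugating maps that commute with $\pr_1$ keep $h=\pr_1\circ H$ a factor, where $H=\lim H_n$ exists only as a measurable, or monotone, map. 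Area preservation and transitivity come for free in the AbC method (transitivity via a generic choice in the closure, as in Fathi--Herman). The rotation vector remains $(\alpha,\beta)$ if the $H_n$ have bounded displacement, which gives $\rot(F)=\{(\alpha,\beta)\}$.

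\textbf{Step 3: topology of fibres.} The fibres $h^{-1}(\theta)$ are decreasing intersections of thin annuli $H_n^{-1}(\{\theta\}\times\T^1)$-neighbourhoods. Design the $n$-th step so that it twists vertical circles only over a set $U_n$ of $\theta$ of measure $<2^{-n}$ which is open and dense in an increasingly fine sense. Off $\limsup U_n$, a null set by Borel--Cantelli, the fibre is a uniform limit of Jordan curves with convergent parametrisations, hence a Jordan curve. On the residual set $\bigcap_N\bigcup_{n>N}U_n$ the twisting makes fibres indecomposable-like, but still thin essential annular continua, since they are nested intersections of essential annuli of widths tending to zero in area.

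\textbf{Step 4: the Cantor minimal sets.} This is the main obstacle. Inside each $H_n$-picture, reserve a Cantor family of invariant "rotation tubes" that are preserved by the later conjugacies, and show that the limits are minimal Cantor sets that are disjoint and uniquely ergodic. I would try to realise each as $h^{-1}$-lifts of a Denjoy-type rotation set sitting inside the twisting region. Uniqueness of ergodic measure on each would follow by showing the restriction is an almost-1--1 extension of $R_\alpha$. Controlling convergence of $H_n$ so that these tubes shrink to Cantor sets while the global map stays transitive is the delicate step.
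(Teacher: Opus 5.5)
There is a genuine gap. Nothing in your proposal produces the uncountable family of disjoint minimal Cantor sets, and the frameworks you invoke work against it.

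\textbf{Step~1 is impossible, not merely abandoned.} With $h=\pr_1$ and circle fibres, $f$ is a skew product. Total irrationality excludes invariant strips, so the minimal set is unique (B\'eguin--Crovisier--J\"ager--Le Roux; this is also the all-Jordan case of Theorem~\ref{thm:generic-jordan-fibres}). Hence no Cantor family of separated invariant graphs exists.

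\textbf{Step~2 cannot supply the minimal sets.} Every $H_n^{-1}R_{(\alpha,\beta)}H_n$ is minimal, so there are no proper invariant ``rotation tubes'' to reserve at any finite stage. Worse, in the closure of conjugates of a minimal map, minimality is itself generic. Each condition ``every orbit enters $B_m$ within bounded time'' is open and holds at every conjugate. So a Fathi--Herman type generic choice gives a unique minimal set $\T^2$, and Step~4 then offers no mechanism.

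\textbf{Step~3 also has a hole.} A uniform limit of Jordan curves need not be a Jordan curve. Moreover, the limiting fibre may be strictly larger than the limit curve.

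\textbf{The missing idea} is to start from a non-minimal model that already contains the whole family, and to conjugate only relative to it. You mention Proposition~\ref{prop:denjoy-model} but discard its key feature. Since $u$ is integer-valued on $K$, every $K\times\{s\}$ is a uniquely ergodic minimal Cantor set of $g$. Two orbits are blown up precisely so that this holds together with $\int u\,d\lambda=\beta$.

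The paper works in the closure of triples $(HgH^{-1},h_0H^{-1},H_*\mu_0)$, with $H$ isotopic to the identity relative to $X=K\times S$. Every limit therefore satisfies $f|_X=g|_X$. The orbit of a point of $X$, together with Lemma~\ref{lem:rotation-singleton}, gives $\rot(F)=\{(\alpha,\beta)\}$.

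The real work is an invariant measure of full support, obtained from the tower perturbations of Lemma~\ref{lem:relative-tower} by a Baire argument. This single fact does most of the remaining work:
\begin{itemize}
\item full support together with $h_*\mu=\operatorname{Leb}_{\T^1}$ gives thin fibres;
\item Lemma~\ref{lem:oxtoby-ulam} then gives area preservation;
\item the map is nonwandering, and Lemma~\ref{lem:potrie-transitivity} upgrades this to transitivity.
\end{itemize}

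For the Jordan fibres, summable base supports make almost every source circle eventually fixed pointwise by all later perturbations. Its image therefore stabilises to a circloid inside the fibre (Lemma~\ref{lem:jordan-fibre-survival}). Proposition~\ref{prop:countable-spikes} then shows that the fibre equals this curve outside a countable set.
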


The same coexistence occurs in every nontrivial Dehn-twist class. Write $\operatorname{Homeo}_k(\T^2)$ for the
homeomorphisms isotopic to $(x,y)\mapsto(x+ky,y)$. Addas-Zanata, Tal and Garcia proved that a minimal homeomorphism
in such a class has a singleton irrational vertical rotation set \cite[Theorem~1]{AddasZanataTalGarcia2014}.
Our next theorem prescribes the irrational vertical rotation number and the nonzero twisting degree while
giving an area-preserving, transitive map with uncountably many minimal Cantor sets. The factor is vertical.
A lift has $\alpha$-bounded vertical deviations if its vertical displacement after $n$ iterates differs from
$n\alpha$ by a bound independent of the point and of $n\in\Z$. See Section~\ref{sec:preliminaries}.

\begin{theorem}\label{thm:dehn-counterexample}
Let $k\in\Z\setminus\{0\}$ and $\alpha\in\R\setminus\Q$. There exist an area-preserving, topologically transitive
homeomorphism $f\in\operatorname{Homeo}_k(\T^2)$, a lift $F$, and a continuous surjection $h:\T^2\to\T^1$ homotopic to
$\pr_2$ such that
\begin{equation}\label{eq:dehn-factor}
 h\circ f=R_\alpha\circ h.
\end{equation}
The lift $F$ has $\alpha$-bounded vertical deviations. Every fibre of $h$ is a thin essential annular continuum.
The parameters whose fibres are Jordan curves have full Lebesgue measure. Moreover, $f$ has uncountably many pairwise
disjoint uniquely ergodic minimal Cantor sets.
\end{theorem}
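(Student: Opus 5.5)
The plan is to build the Dehn-twist example as a skew product over the vertical rotation, reusing whatever construction proves Theorem~\ref{thm:thin-counterexample}, with only the base geometry changed. Write the target as $f(x,y)=(x+ky+\phi_y(x),\,y+\alpha)$ up to a conjugacy $H$, where for each $y\in\T^1$ the map $x\mapsto x+\phi_y(x)$ is an orientation-preserving circle homeomorphism. Then $\pr_2\circ f=R_\alpha\circ\pr_2$, the vertical displacement of the obvious lift is exactly $\alpha$, and $f\in\operatorname{Homeo}_k(\T^2)$. The factor will be $h=\pr_2\circ H^{-1}$, which is homotopic to $\pr_2$ when $H$ is homotopic to the identity. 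Bounded vertical deviations survive the conjugacy, since conjugating by a map homotopic to the identity changes vertical displacements only by a bounded amount.

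The fibres must be thin annular continua, not circles. So I would not take $H$ to be a homeomorphism. Instead I would construct $f$ as the limit of a sequence of area-preserving conjugates $f_n=H_n\circ f_{n-1}\circ H_n^{-1}$, in the spirit of Anosov--Katok, starting from the linear twist $(x,y)\mapsto(x+ky,y+\alpha)$. Each $H_n$ is supported near a thin horizontal strip around the orbit of a finite set of heights. It preserves each horizontal band and shears inside it, so that the horizontal circles $\{y=y_0\}$ are mapped to curves of small vertical oscillation. The composed maps $\Phi_n=H_1\circ\dots\circ H_n$ need not converge to a homeomorphism. The semiconjugacies $h_n=\pr_2\circ\Phi_n^{-1}$ should converge uniformly, however, provided we arrange $\|\pr_2\circ H_n-\pr_2\|_\infty\le\varepsilon_n$ with $\sum\varepsilon_n<\infty$. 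Taking the limit gives $h\circ f=R_\alpha\circ h$. Area preservation passes to the limit, and so does the Dehn class. We make the support heights of stage $n$ an open set $U_n$ of measure $<2^{-n}$ that is dense in the limit. A parameter $y_0$ outside $\limsup U_n$, which is a full-measure set, is touched only finitely often, so its fibre is a Jordan curve. The parameters in $\bigcap_N\bigcup_{n\ge N}U_n$ form a dense $G_\delta$, and there the fibres accumulate increasingly oscillating shears, giving indecomposable-type thin annular continua. Thinness holds because every $h_n$-fibre is a curve and the oscillation width tends to zero relative to area.

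Transitivity and the minimal Cantor sets come from the horizontal dynamics on fibres. We choose $\phi$ so that the fibre maps over the circle factor realize a Denjoy-type family on a Cantor set of horizontal positions. Concretely, first build the topological model of Proposition~\ref{prop:denjoy-model} with $ky$-twisting in place of the $\beta$-translation. This yields uncountably many disjoint invariant sets $\Lambda_t$, each a graph-like extension of the Cantor minimal set of a Denjoy circle map, parametrized by $t$ in a Cantor set. Each is uniquely ergodic, because it is an almost one-to-one extension of a uniquely ergodic Denjoy system over $R_\alpha$. The conjugations $H_n$ are then chosen to fix these sets' combinatorics while collapsing the wandering annuli into thin fibres. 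This makes the map area-preserving, which is the mechanism of Theorem~\ref{thm:thin-counterexample}. Transitivity is obtained at stage $n$ by making $f_n$ $\varepsilon_n$-transitive on a finite grid, which is a closed condition that the later small perturbations preserve.

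The main obstacle is keeping the invariant sets $\Lambda_t$ minimal and disjoint in the limit while the fibres degenerate and the twist $ky$ shears everything horizontally. The twist destroys the product structure used in the identity-class case. I would handle this first by conjugating by a fibrewise homeomorphism $(x,y)\mapsto(x+\psi(y),y)$, with $\psi$ a lift of degree zero, to absorb the twist locally over each support band. After that I would apply the identity-class estimates of Theorem~\ref{thm:thin-counterexample} band by band, choosing $\varepsilon_n$ summable fast enough that the Hausdorff limits of $\Lambda_t^{(n)}$ stay pairwise separated.
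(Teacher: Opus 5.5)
Reusing the identity-class construction is the right instinct, but the mechanisms you describe are incompatible: the one meant to give area preservation cannot coexist with the one that gives the minimal sets.

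\emph{Starting from the linear twist.} If you start from $(x,y)\mapsto(x+ky,y+\alpha)$ and conjugate by area-preserving maps, every $f_n$ is conjugate to a minimal map. More generally, your normal form $(x+ky+\phi_y(x),y+\alpha)$ is a circle skew product of twisting degree $k\neq0$, and such maps have a unique minimal set \cite{HammerlindlPotrie2014}. So the Cantor sets cannot come ``from the horizontal dynamics on fibres'' at any finite stage.

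\emph{Starting from the Denjoy model.} Its wandering annuli make every conjugate non-area-preserving, because an area-preserving torus map is nonwandering. So area preservation cannot pass to the limit. Nor does the limit arise by ``collapsing'' the annuli: the conjugacies need not converge, and in the identity class Corollary~\ref{cor:denjoy-no-thin-factor} shows the target cannot even be a factor of the model.

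\emph{The missing idea} is to carry an invariant measure instead of area. With $\mu_0=\operatorname{Leb}_{\T^1}\times\lambda$, work in the complete space of limits of triples $(HgH^{-1},h_0H^{-1},H_*\mu_0)$. Use tower perturbations inside the wandering annuli to push boundary points of those annuli, which lie in $\operatorname{supp}\mu_0$, into each ball of a countable basis. A nested-ball limit then yields a measure $\mu$ with full support and $h_*\mu=\operatorname{Leb}_{\T^1}$, and Oxtoby--Ulam makes $f$ area-preserving.
\begin{itemize}
\item This $\mu$ gives thinness: every fibre is $\mu$-null and so has empty interior. Your observation that each $h_n$-fibre is a curve says nothing about uniform limits, and in the Denjoy model the fibres over the blown-up orbit are annuli anyway.
\item It also gives transitivity, via full support, absence of periodic points, non-eventual-annularity and Kocsard's fully essential recurrence (Lemma~\ref{lem:dehn-transitivity}). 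Your ``$\varepsilon_n$-transitivity'' step calls an open condition closed and gives no density argument for it.
\end{itemize}

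\emph{Your treatment of the ``main obstacle'' would also fail.} Small later perturbations protect only pairs of sets whose distance exceeds the remaining total displacement. In an infinite family the mutual distances have no positive lower bound. Moreover, Hausdorff limits of minimal sets need not be minimal or uniquely ergodic. Two ideas remove the obstacle.
\begin{itemize}
\item Concentrate the whole degree-$k$ twist in one wandering interval: $g(x,y)=(x+\tau(y),f_0(y))$, with $\tau$ rising from $0$ to $k$ across $I$ and vanishing modulo $1$ elsewhere. Then $g(x,y)=(x,f_0(y))$ on $\T^1\times K$, so the twist never shears the sets $\{s\}\times K$ and there is nothing to absorb (Proposition~\ref{prop:dehn-denjoy-model}).
\item Use only conjugacies isotopic to the identity relative to $X=S\times K$, with $S$ a Cantor set. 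This is possible because the finger moving a boundary point into the target ball can be built in a vertical strip missing $S$ (Lemma~\ref{lem:dehn-relative-tower}).
\end{itemize}
Every limit then satisfies $f|_X=g|_X$ exactly. Hence the sets $\{s\}\times K$, $s\in S$, persist as pairwise disjoint uniquely ergodic minimal Cantor sets without any estimates.

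\emph{Jordan fibres.} A parameter touched by only finitely many perturbations gives an essential Jordan curve contained in the limiting fibre, not equal to it. To conclude, you need that this curve is the circloid core of the thin fibre and that only countably many fibres differ from their cores (Proposition~\ref{prop:countable-spikes}, as in Lemma~\ref{lem:jordan-fibre-survival}).
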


For circle skew products in the identity class over an irrational rotation, B{\'e}guin, Crovisier, J{\"a}ger and
Le Roux constructed transitive nonminimal examples with a unique minimal Cantor set
\cite[Proposition~1.4]{BeguinCrovisierJagerLeRoux2009}. They also proved uniqueness under transitivity or the
absence of invariant strips \cite[Proposition~4.2]{BeguinCrovisierJagerLeRoux2009}. For circle skew products with
nonzero twisting degree, Hammerlindl and Potrie proved uniqueness
\cite[Section~6.2, proof of Proposition~6.3]{HammerlindlPotrie2014}. The following theorem extends uniqueness
to factors with singular annular fibres. It suffices that the fibres are Jordan curves on a residual set of
parameters. A set is residual if it contains a dense $G_\delta$ subset.

\begin{theorem}\label{thm:generic-jordan-fibres}
Let $h:\T^2\to\T^1$ be a continuous surjection. Suppose that either
\begin{enumerate}
 \item $f$ is a totally irrational pseudo-rotation and $h\simeq\pr_1$ satisfies \eqref{eq:semiconjugacy}; or
 \item $f\in\operatorname{Homeo}_k(\T^2)$ for some $k\neq0$ and $h\simeq\pr_2$ satisfies \eqref{eq:dehn-factor}, with
 $\alpha$ irrational.
\end{enumerate}
Assume that every fibre of $h$ is a thin essential annular continuum. If the fibres are Jordan curves on a
residual set of base parameters, then $f$ has a unique minimal set.
\end{theorem}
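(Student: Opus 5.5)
We argue by contradiction: suppose $M_1\neq M_2$ are minimal sets of $f$, necessarily disjoint. Since $h$ semiconjugates $f$ to the minimal rotation $R_\alpha$, each $h(M_i)$ is a closed nonempty $R_\alpha$-invariant set, hence $h(M_i)=\T^1$. So every fibre $C_\theta=h^{-1}(\theta)$ meets both $M_1$ and $M_2$. Also $f(C_\theta)=C_{\theta+\alpha}$, so $f$ permutes the fibres along rotation orbits. In particular the set $J$ of parameters with Jordan-curve fibres is $R_\alpha$-invariant. It is residual by hypothesis, so it is dense.

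The plan is to show that on a Jordan fibre the two minimal sets interleave in a way that forces them to coincide. Fix $\theta\in J$ and write $\gamma=C_\theta$, an essential Jordan curve. Since $f^n(\gamma)=C_{\theta+n\alpha}$, the first-return structure is trivial. The key tool is an ordering of fibres. In case (1), $h\simeq\pr_1$, and the fibres $C_\theta$ are pairwise disjoint essential annular continua; they are linearly ordered in the lift by the circular order of $\T^1$. In case (2), the same holds with $\pr_2$. Choose $\theta_0\in J$ and points $p_i\in M_i\cap C_{\theta_0}$. The orbit $\theta_0+n\alpha$ returns arbitrarily close to $\theta_0$. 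Since $M_i\cap C_\theta$ is nonempty for every $\theta$, the compact sets $M_i$ project onto every fibre. The strategy is to use upper semicontinuity of $\theta\mapsto C_\theta$ together with the Jordan-curve structure at residual parameters to prove a continuity statement. For $\theta$ in a dense $G_\delta$ subset of $J$, the map $\theta\mapsto C_\theta$ is continuous in the Hausdorff topology; this holds at a residual set of continuity points of any upper semicontinuous compact-valued map. At such a point, nearby fibres Hausdorff-converge to the Jordan curve $C_\theta$.

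Next, take a parameter $\theta$ that is both a continuity point and in $J$. We want to show that $M_1\cap C_\theta$ and $M_2\cap C_\theta$ cannot be disjoint compact subsets of the curve $\gamma$ unless a contradiction arises with the dynamics in the fibre direction. Here the pseudo-rotation hypothesis, or the nonzero twist, enters. In case (1), total irrationality of the rotation vector means that the vertical rotation of the lift along the fibre circle is irrational relative to $\alpha$. Concretely, using $\rho(F)=\{(\alpha,\beta)\}$, the points of $M_1$ and $M_2$ lifted to $\R^2$ have displacements $n(\alpha,\beta)+o(n)$. Consider the return map from $\gamma$ to fibres $C_{\theta+n_j\alpha}$ with $n_j\alpha\to0$ mod $1$; these fibres converge to $\gamma$. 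The induced circle dynamics along $\gamma$ has rotation number approximately $n_j\beta$ mod $1$, which is dense. One then shows that the closure of the limiting fibre-to-fibre maps yields a group of circle maps on $\gamma$ that acts minimally. Minimality would follow from an irrational rotation number together with the absence of wandering intervals, and the latter comes from transitivity-type arguments or directly from the rotation density. This action preserves both traces $M_i\cap\gamma$, which are closed and nonempty. Hence each trace equals $\gamma$, so $M_1\cap M_2\neq\emptyset$, a contradiction. In case (2), the nonzero twist $k$ replaces $\beta$: the horizontal displacement along fibres grows like $k\cdot(\text{vertical displacement})$, and the argument of Hammerlindl--Potrie applies along limiting fibres.

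The main obstacle is making the limiting return maps well defined. The maps $f^{n_j}$ send $\gamma$ to different fibres $C_{\theta+n_j\alpha}$, which are only close to $\gamma$ in the Hausdorff sense and may not be Jordan curves. To handle this, I would avoid maps and work with the closed traces directly. I would use the circular order on the essential curve $\gamma$ and on lifts to the annulus cover. I would show that if the traces of $M_1$ and $M_2$ on $\gamma$ were disjoint, then some complementary arc of $\gamma$, thickened to a small disk, would be a region that the displacements $n_j\beta$ cannot cross without hitting $M_1\cup M_2$. This would contradict the uniform rotation vector via a Denjoy-type ordering argument. I expect this ordering and crossing step to be the hardest part.
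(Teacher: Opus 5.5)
Your proposal has a genuine gap, and it sits in the central step. You try to show that limits of the fibre-to-fibre maps $f^{n_j}$ give an action on a Jordan fibre $\gamma$ that is minimal, so that each trace $M_i\cap\gamma$ is all of $\gamma$. That step uses only one minimal set at a time. It would therefore show that every minimal set contains the fibres over a dense set of parameters, hence equals $\T^2$; in other words, $f$ would be minimal. This is false under the hypotheses of the theorem.

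Take $f(x,y)=(x+\alpha,g(y))$ with $g$ a Denjoy homeomorphism of rotation number $\beta$, where $1,\alpha,\beta$ are rationally independent, and take $h=\pr_1$. Every fibre is a vertical circle and $\rot(F)=\{(\alpha,\beta)\}$. The unique minimal set is $\T^1\times K_g$, whose trace on every fibre is a Cantor set.

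The faulty claim is that absence of wandering intervals follows from irrational or dense rotation; Denjoy's examples show it does not. More fundamentally, no argument that looks at one minimal set at a time can work, because a single minimal set with gaps in every fibre is consistent with all the hypotheses. Your last paragraph rightly starts from disjointness of the two traces. But the proposed contradiction, displacements unable to cross a thickened complementary arc, detects nothing that the single Cantor trace in the example above does not already exhibit. As stated it cannot yield a contradiction.

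The missing idea is to use the two colours.

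\begin{itemize}
\item On a good Jordan fibre, the disjoint compact traces of $M_0$ and $M_1$ have finitely many mixed gaps, and both types $01$ and $10$ occur in the cyclic order.
\item The dynamics preserves both ends and the fibre orientation. So the fibrewise-defined sets of points lying in gaps of type $01$, respectively $10$, are invariant and disjoint.
\item One shows their interiors are nonempty and meet the good fibre in finitely many components.
\item The dynamical hypothesis (total irrationality, or the nonzero twist) is then used, not to make a fibre action minimal, but to show that such an invariant open set is connected and fully essential (Lemma~\ref{lem:invariant-open-finite-fibre}). Two disjoint fully essential domains cannot exist.
\end{itemize}

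The hard technical step is the nonempty interior. Nearby fibres may be singular, so Hausdorff convergence of $h^{-1}(\theta)$ to the Jordan curve is not enough. The paper defines gap types on the prime-end circles of every fibre. At residual parameters it proves Hausdorff continuity of the traces $q^{-1}(M_i)\cap A_t$ and of the prime-end sets $P^\sigma_{M_i}(t)$. It also proves a uniform bound placing every impression of every nearby fibre close to the boundary extension at $t_0$ (Lemma~\ref{lem:prime-end-good-fibre}). Together these keep compact subarcs of mixed gaps inside gaps of the same type on nearby fibres.
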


Every thin annular continuum has a unique \textbf{core}, its inclusion-minimal essential annular subcontinuum.
Only countably many fibres differ from their cores (Proposition~\ref{prop:countable-spikes}). This gives another
criterion for uniqueness. Here nonmeagre means not a countable union of nowhere dense sets.

\begin{corollary}\label{cor:locally-connected}
Under either dynamical hypothesis of Theorem~\ref{thm:generic-jordan-fibres}, suppose that every fibre is a thin
essential annular continuum. If the fibre cores are Jordan curves on a nonmeagre set of base parameters, then
$f$ has a unique minimal set. The same conclusion holds if the fibres are locally connected on a nonmeagre set.
\end{corollary}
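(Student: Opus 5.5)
The plan is to reduce the corollary to Theorem~\ref{thm:generic-jordan-fibres} by showing that either hypothesis upgrades to ``fibres are Jordan curves on a residual set''. First I will show that the set $J=\{\theta\in\T^1 : h^{-1}(\theta)\text{ is a Jordan curve}\}$ is invariant under $R_\alpha$. This holds because $f$ maps $h^{-1}(\theta)$ homeomorphically onto $h^{-1}(\theta+\alpha)$. The same invariance holds for the set $J_c$ of parameters whose core is a Jordan curve: the core is defined intrinsically as the inclusion-minimal essential annular subcontinuum, so $f$ carries cores to cores. It also holds for the set $L$ of parameters with locally connected fibres.

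Next I use Proposition~\ref{prop:countable-spikes}. Outside a countable set $C$, each fibre equals its core, so $J\supseteq J_c\setminus C$. Since a countable set is meagre in $\T^1$, $J_c$ nonmeagre implies $J$ nonmeagre. For the locally connected alternative, I claim that a thin locally connected essential annular continuum has a Jordan-curve core. Its core is then likewise a thin essential annular continuum that is locally connected. I would check this via the fact that the core is a retract-like subcontinuum of a Peano continuum, or more simply by restricting to $\theta\notin C$, where fibre and core coincide. Such a set is a Peano continuum with empty interior whose complement in $\T^2$ is a single open annulus. Lifting to the infinite annulus, its two complementary components are unbounded with common boundary equal to the whole continuum, since the core is minimal. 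A locally connected continuum that is the common boundary of two complementary domains and separates the annulus, with no proper essential subcontinuum, is a simple closed curve by the classical Kuratowski/Torhorst characterisation. Thus $L\setminus C\subseteq J$, and again $J$ is nonmeagre.

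The key step is to pass from nonmeagre to residual, and this is where I expect the main work. I would first show that $J$ has the Baire property. One route is to write $J$ as a $G_\delta$, or at least as an analytic set. The set of $\theta$ for which the fibre is a Jordan curve can be expressed via upper semicontinuity of $\theta\mapsto h^{-1}(\theta)$ in the Hausdorff topology. Being a Jordan curve among thin essential annular continua should be equivalent to local connectedness. Local connectedness can then be phrased with countably many uniform-$\varepsilon$-$\delta$ conditions, which gives a Borel (indeed coanalytic or Borel) description. So $J$ has the Baire property. An $R_\alpha$-invariant set with the Baire property satisfies a topological zero-one law by minimality of $R_\alpha$: if it is nonmeagre, it is comeagre in some open interval $U$. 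The union $\bigcup_n R_\alpha^n U$ is then the whole circle by minimality and compactness, so $J$ is comeagre, that is, residual. Theorem~\ref{thm:generic-jordan-fibres} now yields a unique minimal set. If the measurability step proves delicate, the fallback is to prove directly that the set of $\theta$ with locally connected fibre is $G_\delta$, using the oscillation of connectivity moduli along the upper semicontinuous decomposition.
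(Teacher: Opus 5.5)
Your proposal is correct and is essentially the paper's argument (Lemma~\ref{lem:lc-parameters}). It discards the countable set of Proposition~\ref{prop:countable-spikes}, gets Borel measurability of the locally connected parameters from countably many uniform $\varepsilon$--$\delta$ conditions (the paper's closed hyperspace classes $\mathcal L_{m,n}$), uses the category zero-one law for $R_\alpha$-invariant sets with the Baire property (Lemma~\ref{lem:category-zero-one}) to make $J$ residual, and then invokes Theorem~\ref{thm:generic-jordan-fibres}.

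The differences are minor. The paper shows that every locally connected fibre has a Jordan-curve core by extracting a Jordan curve via Lemma~\ref{lem:locally-connected-separator}, whereas you apply the Torhorst--Kuratowski characterisation only where fibre and core coincide. Also, your claim that, among thin essential annular continua, being a Jordan curve is equivalent to local connectedness is false as stated: an essential Jordan curve with an arc attached is a counterexample. It does hold off the countable set, and that is all the Baire-property step needs, since $J$ and $L$ then differ by a countable set.
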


The unique minimal set may be a proper subset of the torus. Neither nonwandering dynamics nor a common
modulus of local connectedness is required. Theorems~\ref{thm:thin-counterexample} and~\ref{thm:dehn-counterexample}
show that the residual hypothesis in Theorem~\ref{thm:generic-jordan-fibres} cannot be replaced by a full-measure
hypothesis, even after adding area preservation and transitivity. In both examples, the parameters whose
fibres are Jordan curves form a meagre set of full Lebesgue measure. Equivalently, the parameters whose
fibres are not Jordan curves form a residual set of measure zero.

The constructions use approximation by conjugation of a fixed Denjoy model. The main issue is to obtain an
invariant measure of full support while preserving a prescribed family of minimal sets and Jordan fibres almost
everywhere. Each conjugacy fixes every point in the prescribed minimal sets. Successive perturbations give
positive mass to the sets of a countable basis and preserve the earlier positive-mass conditions. We also require
summable bounds on the measure of the base parameters affected by the perturbations. The limiting measure has
full support and projects to Lebesgue measure, so every fibre has measure zero and empty interior. For almost
every parameter, the image of the source circle stabilises to an essential Jordan curve in the limiting fibre.
Proposition~\ref{prop:countable-spikes} identifies this curve with the whole fibre outside a countable exceptional
set. The Oxtoby--Ulam theorem gives area preservation.

For uniqueness, the main issue is to extend cyclic-order arguments from circle skew products to singular annular
fibres. Lemma~\ref{lem:prime-end-good-fibre} gives the required prime-end control. At a residual set of parameters,
the fibre is a Jordan curve and every impression of every nearby fibre stays close to the corresponding point of
that curve. The estimate is uniform over all prime ends on both sides; the nearby fibres may be singular. For any
fixed compact set meeting every fibre, the lemma also gives continuity of the intersections with the fibres and of
the corresponding prime-end sets. It uses no dynamics and no common modulus of local connectedness. These estimates
show that compact subarcs of gaps between disjoint compact sets persist on nearby prime-end circles. This extends
the oriented-gap argument to the annular decompositions considered here.

After the preliminaries, Section~\ref{sec:counterexample} gives Avila's example and proves
Theorem~\ref{thm:thin-counterexample}. Section~\ref{sec:dehn} treats the Dehn-twist class.
Section~\ref{sec:thin-geometry} establishes the fibre structure and reduces Corollary~\ref{cor:locally-connected}
to Theorem~\ref{thm:generic-jordan-fibres}. Section~\ref{sec:uniqueness} proves that theorem.

\section{Notation and preliminaries}\label{sec:preliminaries}

Let $\pi:\R^2\to\T^2$ be the covering map. A lift $F:\R^2\to\R^2$ of a torus homeomorphism $f$ satisfies
$\pi F=f\pi$. If $f\in\operatorname{Homeo}_0(\T^2)$, then $F$ commutes with integer translations.
The displacement $F-\operatorname{id}$ is bounded and $\Z^2$-periodic. The \textbf{rotation set} of $F$ is
\begin{equation}\label{eq:rotation-set}
 \rot(F)=\bigcap_{N\geq1}
 \overline{\left\{\frac{F^n(z)-z}{n}:z\in\R^2,\ n\geq N\right\}}.
\end{equation}
Thus $\rot(F)$ consists of all limits of $(F^{n_j}(z_j)-z_j)/n_j$ with $n_j\to\infty$. The initial points $z_j$ may
vary. Changing $F$ by an integer translation translates $\rot(F)$ by the same vector. Thus being a totally
irrational pseudo-rotation does not depend on the choice of lift.

If $\rot(F)=\{\rho\}$, boundedness of the displacement and the definition of the rotation set give
\begin{equation}\label{eq:uniform-rotation}
 \frac{F^n(z)-z}{n}\longrightarrow\rho
 \quad\text{uniformly in }z\in\R^2.
\end{equation}
Indeed, failure of uniform convergence would give a sequence of average displacements bounded away from $\rho$.
A convergent subsequence would then yield another point of $\rot(F)$.

If $f\in\operatorname{Homeo}_k(\T^2)$, its action on first homology is given by
\begin{equation}\label{eq:dehn-matrix}
 A_k=\begin{pmatrix}1&k\\0&1\end{pmatrix}.
\end{equation}
A lift satisfies
$F(z+v)=F(z)+A_kv$ for $v\in\Z^2$. Its vertical displacement is therefore periodic, even when its full displacement
is not. The \textbf{vertical rotation set} is
\begin{equation}\label{eq:vertical-rotation-set}
 \rho_V(F)=\bigcap_{N\geq1}\overline{
 \left\{\frac{\pr_2(F^n(z)-z)}{n}:z\in\R^2,\ n\geq N\right\}}.
\end{equation}
The lift $F$ has \textbf{$\alpha$-bounded vertical deviations} if
\begin{equation}\label{eq:vertical-bounded-deviations}
 \sup_{z\in\R^2,\,n\in\Z}
 \left|\pr_2(F^n(z)-z)-n\alpha\right|<\infty.
\end{equation}
This implies $\rho_V(F)=\{\alpha\}$. A vertical circle factor gives such a bound by
Lemma~\ref{lem:factor-bounded-deviation}.

Area preservation means preservation of normalised Lebesgue measure on $\T^2$.
A minimal set is a nonempty compact invariant set with no proper nonempty compact invariant subset. It is uniquely
ergodic if the restricted dynamics carries exactly one invariant Borel probability measure. A homeomorphism is nonwandering
if every nonempty open set $U$ meets $f^n(U)$ for some $n\geq1$. We write $\Omega(f)$ for its nonwandering set.
It is topologically transitive if, for any nonempty open sets $U,V$, some $n\geq1$ satisfies $f^n(U)\cap V\neq\varnothing$.

We use the maximum product metric on $\T^2$ and the uniform metric $d_0$ on spaces of continuous maps. On torus
homeomorphisms put
\begin{equation}\label{eq:homeomorphism-metric}
 d_{\mathrm H}(f,g)=d_0(f,g)+d_0(f^{-1},g^{-1}).
\end{equation}
This metric is complete, and each homotopy class is closed.
Homotopy classes in $C(\T^2,\T^1)$ are also closed in the uniform metric.
We use the Hausdorff metric for nonempty compact sets.
Statements about the measure or category of fibre parameters refer to the base circle.

If the factor map is a circle bundle, bundle coordinates make the dynamics a circle skew product. In the totally
irrational identity case, an invariant strip would give a rational relation among
$1,\alpha,\beta$ \cite[Remark~3.8 and Lemma~3.9]{JagerKeller2006}. Its absence gives a unique minimal set
\cite[Proposition~4.2]{BeguinCrovisierJagerLeRoux2009}.

\subsection{Results used in the proofs}

\begin{lemma}\label{lem:measure-facts}
Let $Y$ and $Z$ be compact metric spaces.
\begin{enumerate}
\item\label{item:measure-limits} Suppose that continuous maps $T_n:Y\to Y$ and $p_n:Y\to Z$ converge uniformly to $T$ and $p$,
and that probability measures $\nu_n$ converge weakly to $\nu$. Then
$(T_n)_*\nu_n\to T_*\nu$ and $(p_n)_*\nu_n\to p_*\nu$ weakly. In particular, if
$(T_n)_*\nu_n=\nu_n$ and $(p_n)_*\nu_n=\lambda$, then $T_*\nu=\nu$ and $p_*\nu=\lambda$.
\item\label{item:uniform-ergodic} If $T:Y\to Y$ is continuous and has a unique invariant probability measure
$\nu$, then for every $v\in C(Y,\R)$,
\begin{equation}\label{eq:prelim-uniform-ergodic}
 \frac1n\sum_{j=0}^{n-1}v(T^j y)\longrightarrow\int_Y v\,d\nu
 \quad\text{uniformly in }y\in Y.
\end{equation}
\item\label{item:measure-recurrence} If a homeomorphism $T:Y\to Y$ preserves a probability measure $\nu$,
every measurable set $W$ of positive measure satisfies $T^n(W)\cap W\neq\varnothing$ for arbitrarily large
positive integers $n$. In particular, a full-support invariant probability measure makes $T$ nonwandering.
\item\label{item:summable-sets} If $(E_n)$ is a sequence of measurable sets in a probability space $(Y,\nu)$
and $\sum_n\nu(E_n)<\infty$, then almost every point belongs to only finitely many $E_n$.
\end{enumerate}
\end{lemma}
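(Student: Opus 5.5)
The four items are standard facts, and I will prove each directly from the definitions, in the order stated.

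For (1), fix $\varphi\in C(Z,\R)$ (respectively $C(Y,\R)$). Then
\[
\int\varphi\,d(p_n)_*\nu_n-\int\varphi\,dp_*\nu=\int(\varphi\circ p_n-\varphi\circ p)\,d\nu_n+\Bigl(\int\varphi\circ p\,d\nu_n-\int\varphi\circ p\,d\nu\Bigr).
\]
The first term is at most $\sup|\varphi\circ p_n-\varphi\circ p|$. This tends to $0$ because $\varphi$ is uniformly continuous on the compact space $Z$ and $p_n\to p$ uniformly. The second term tends to $0$ by weak convergence, since $\varphi\circ p$ is continuous. The same computation with $T_n,T$ gives $(T_n)_*\nu_n\to T_*\nu$. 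For the ``in particular'' statement, if $(T_n)_*\nu_n=\nu_n$ then the left side converges both to $T_*\nu$ and to $\nu$, so $T_*\nu=\nu$ by uniqueness of weak limits. Similarly, if $(p_n)_*\nu_n=\lambda$ for all $n$, then $p_*\nu=\lambda$.

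For (2), I use the classical Krylov--Bogolyubov argument by contradiction. Suppose uniform convergence fails. Then there are $\varepsilon>0$, $n_j\to\infty$ and $y_j\in Y$ with $\bigl|\frac1{n_j}\sum_{i<n_j}v(T^iy_j)-\int v\,d\nu\bigr|\ge\varepsilon$. Set $\mu_j=\frac1{n_j}\sum_{i<n_j}\delta_{T^iy_j}$. By weak-$*$ compactness of probability measures on compact $Y$, a subsequence converges to some $\mu$. Since $T_*\mu_j-\mu_j=\frac1{n_j}(\delta_{T^{n_j}y_j}-\delta_{y_j})$ has total variation at most $2/n_j$, and $T_*$ is weakly continuous by (1), the limit $\mu$ is $T$-invariant. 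Unique ergodicity then forces $\mu=\nu$. But $\bigl|\int v\,d\mu-\int v\,d\nu\bigr|\ge\varepsilon$, a contradiction.

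For (3), I use Poincaré recurrence. Suppose $T^n(W)\cap W=\varnothing$ for all $n\ge N$. Then the sets $T^{kN}(W)$, $k\ge0$, are pairwise disjoint, because $T^{kN}W\cap T^{lN}W=T^{kN}(W\cap T^{(l-k)N}W)$ and $T$ is a bijection. Each of these sets has measure $\nu(W)>0$, since $T$ preserves $\nu$ and $T^{-1}$ is measurable. Infinitely many disjoint sets of equal positive measure contradict $\nu(Y)=1$. If $\nu$ has full support, every nonempty open set $U$ has positive measure, so $f^n(U)\cap U\ne\varnothing$ for some $n\ge1$; hence $T$ is nonwandering.

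For (4), this is Borel--Cantelli. The set of points lying in infinitely many $E_n$ is $\bigcap_m\bigcup_{n\ge m}E_n$. Its measure is at most $\sum_{n\ge m}\nu(E_n)$ for every $m$, and this tail tends to $0$ because the series converges.

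No step is a real obstacle. The only points needing care are in (2): extracting the invariant limit measure, which needs compactness of $Y$, and the uniform-continuity estimate it relies on from (1).
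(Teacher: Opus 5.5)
Your proof is correct and matches the paper's argument item by item. Both use testing against continuous functions for (1), invariance of weak limits of empirical measures with varying initial points for (2), disjointness of the images $T^{kN}(W)$ for (3) (the paper bounds all return times by $N$ and uses step $N+1$), and the Borel--Cantelli tail bound for (4); the only slip is the stray $f$ in place of $T$ at the end of (3).
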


\begin{proof}
For the first assertion, test against a continuous function. Uniform convergence controls the change in the map,
and weak convergence controls the change in the measure. For the second, every weak limit of empirical measures
$n^{-1}\sum_{j=0}^{n-1}\delta_{T^j y_n}$ is invariant, even when the initial points $y_n$ vary. Uniqueness of
$\nu$ therefore gives the asserted uniform convergence.

For the third, suppose that all positive return times are at most $N$.
Then the sets $T^{j(N+1)}(W)$, $j\geq0$, are pairwise disjoint and have the same positive measure.
This contradicts finiteness of the measure. The last
assertion follows from
$\nu(\bigcup_{n\geq N}E_n)\leq\sum_{n\geq N}\nu(E_n)\to0$.
\end{proof}

The following is the Oxtoby--Ulam theorem in the isotopy form recalled in
\cite[Section~2, pp.~150--151]{LeRoux2014}.

\begin{lemma}\label{lem:oxtoby-ulam}
Let $\nu$ be a nonatomic Borel probability measure of full support on $\T^2$. There is a homeomorphism
$\Phi\in\operatorname{Homeo}_0(\T^2)$ such that $\Phi_*\nu=\operatorname{Leb}_{\T^2}$. Consequently, if
$f_*\nu=\nu$, then $\Phi f\Phi^{-1}$ preserves normalised Lebesgue measure.
\end{lemma}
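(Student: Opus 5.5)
This is a cited classical result, so the plan is to reduce the stated isotopy form to the Oxtoby--Ulam theorem on the square and then check the conjugation consequence.

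\emph{Step 1: reduce to the square.} Pick a closed curve system in $\T^2$ of $\nu$-measure zero that cuts the torus into an open disc. Because $\nu$ is nonatomic, for all but countably many $c$ the vertical circle $\{x=c\}$ is $\nu$-null: uncountably many pairwise disjoint circles cannot all carry positive mass. The same holds for the horizontal circles $\{y=d\}$. Fix such $c$ and $d$.

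\emph{Step 2: straighten the boundary measures.} Before applying Oxtoby--Ulam on the square, first apply a homeomorphism isotopic to the identity of product form $(x,y)\mapsto(\phi(x),\psi(y))$, with $\phi$ and $\psi$ circle homeomorphisms. Choose them so that the projected marginals of $\nu$ become Lebesgue measure on the circle. These marginals are nonatomic and have full support, so their distribution functions are circle homeomorphisms. Translate so that the chosen null circles are $x=0$ and $y=0$.

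\emph{Step 3: apply Oxtoby--Ulam on the square.} The pushed measure restricted to the open square $(0,1)^2$ is nonatomic, has full support, has total mass $1$, and gives zero mass to the boundary. Oxtoby--Ulam on the closed square gives a homeomorphism $\Psi$ of $[0,1]^2$ that fixes the boundary pointwise and carries this measure to Lebesgue measure. Since $\Psi$ is the identity on the boundary, it descends to $\T^2$. By the Alexander trick on the square, it is isotopic to the identity rel boundary. The composite $\Phi$ therefore lies in $\operatorname{Homeo}_0(\T^2)$ and satisfies $\Phi_*\nu=\operatorname{Leb}$.

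\emph{Step 4: the conjugation consequence.} Compute
\[
(\Phi f\Phi^{-1})_*\operatorname{Leb}=\Phi_*f_*\nu=\Phi_*\nu=\operatorname{Leb}.
\]

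\emph{Main obstacle.} The only delicate point is that Oxtoby--Ulam needs the boundary to be null and the measure to have full support. Both are handled in Steps 1 and 2, after which we simply cite the Oxtoby--Ulam theorem as recalled in \cite{LeRoux2014}.
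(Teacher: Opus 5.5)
\textbf{A false claim in Step 2.} Your reduction works once Step 2 is removed. That step asserts that the marginals $(\pr_1)_*\nu$ and $(\pr_2)_*\nu$ are nonatomic, and this is false. A nonatomic measure can still charge an entire circle. For example,
\[
\nu=\tfrac12\operatorname{Leb}_{\T^2}+\tfrac12\bigl(\delta_c\times\operatorname{Leb}_{\T^1}\bigr)
\]
is nonatomic with full support, yet its first marginal has an atom at $c$. Its distribution function therefore jumps and is not a circle homeomorphism, so the product map you describe does not exist in general.

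\textbf{Step 2 is unnecessary.} The classical Oxtoby--Ulam theorem on $[0,1]^2$ needs only three things: the measure is nonatomic, it is positive on nonempty open sets, and it vanishes on the boundary. A translation of $\T^2$ moves the two $\nu$-null circles from Step~1 to $x=0$ and $y=0$, and translations lie in $\operatorname{Homeo}_0(\T^2)$. Step~3 then applies directly: the boundary-fixing homeomorphism descends to the torus, and the Alexander trick rel boundary puts it in $\operatorname{Homeo}_0(\T^2)$. Step~4 is correct as written. A minor point: Step~1 uses only finiteness of $\nu$, not nonatomicity.

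\textbf{Comparison with the paper.} The paper gives no argument for this lemma; it quotes the isotopy form directly from Le Roux. With Step 2 deleted, you instead derive that form from the classical square version by cutting along null circles and using the Alexander trick. This makes explicit why $\Phi$ can be taken isotopic to the identity, which the paper needs when it uses $\Phi$ to preserve rotation vectors and homotopy classes. Your route is self-contained modulo the classical theorem, at the cost of a few extra lines; the citation is shorter.
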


For rotation sets and recurrence, we use the following two results. The first is
\cite[Theorem~1]{KoropeckiPasseggiSambarino2021}. It requires no recurrence assumption.

\begin{lemma}\label{lem:rotation-singleton}
Let $f\in\operatorname{Homeo}_0(\T^2)$. If a continuous surjection $h:\T^2\to\T^1$ satisfies
$hf=R_\alpha h$ with $\alpha$ irrational, then $\rot(F)$ is a singleton for every lift $F$ of $f$.
\end{lemma}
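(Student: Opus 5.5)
Since the paper simply quotes this lemma as \cite[Theorem~1]{KoropeckiPasseggiSambarino2021}, the proof there amounts to a citation. The outline below is how I would organise an argument, splitting it into an elementary reduction to a line and a deep step ruling out segments.

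\textbf{Reduction to a line.} First lift the factor map. Since $h:\T^2\to\T^1$ is continuous, it has a lift $H:\R^2\to\R$ and an integer vector $w\in\Z^2$ such that $H(z+v)=H(z)+\langle w,v\rangle$ for $v\in\Z^2$. The map $H-\langle w,\cdot\rangle$ is then $\Z^2$-periodic, hence bounded. The relation $hf=R_\alpha h$ lifts to $H\circ F=H+\alpha+m$ for some $m\in\Z$, which can be absorbed into $\alpha$. Iterating gives $H(F^nz)=H(z)+n(\alpha+m)$, so
\[
 \langle w,F^n(z)-z\rangle=n(\alpha+m)+O(1)
\]
uniformly in $z$ and $n$. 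Note that $w\neq0$: otherwise $H$ is bounded, which contradicts $n(\alpha+m)\to\infty$. Dividing by $n$ and passing to limits, every point of $\rot(F)$ lies on the line
\[
 L=\{v:\langle w,v\rangle=\alpha+m\}.
\]
This line has rational slope and contains no rational point, because $\langle w,v\rangle\in\Q$ whenever $v\in\Q^2$.

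\textbf{Ruling out a nondegenerate segment.} The rotation set of a map in $\operatorname{Homeo}_0(\T^2)$ is nonempty, compact and convex. So $\rot(F)$ is either a point or a nondegenerate segment contained in $L$. It remains to exclude a segment of rational slope that contains no rational points. I would invoke the structure theory of segment rotation sets, due to Le Calvez--Tal and Dávalos via forcing theory. A nondegenerate rotation segment of rational slope must lie on a line through rational points; equivalently, after a change of coordinates in $\operatorname{GL}_2(\Z)$ sending $w$ to $(1,0)$, a segment inside a vertical line $\{x=c\}$ forces $c\in\Q$. Applying this with $c=\alpha+m$ irrational gives a contradiction, so $\rot(F)$ is a singleton.

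\textbf{Main obstacle.} The hard part is this second step. It is a genuinely two-dimensional statement and no longer uses the factor: one must show that a vertical rotation segment at irrational horizontal position is impossible with no recurrence hypothesis. That is exactly the content of the cited theorem. If I had to argue it directly, I would work in the annular cover $\R^2/(\{0\}\times\Z)$ in the new coordinates. There the horizontal rotation is the irrational number $c$, while points realising distinct vertical speeds would, via Le Calvez--Tal forcing, produce a topological horseshoe. A horseshoe of this kind yields periodic orbits with rational rotation vectors close to the segment, and these would lie on $\{x=c\}$, which contains no rational points — contradicting the irrationality of $c$.
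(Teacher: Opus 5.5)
\textbf{The gap is in your second step.} Your first step is correct, and it is the easy half. Lifting $h$ gives $\langle w,F^n(z)-z\rangle=n(\alpha+m)+O(1)$ with $w\in\Z^2\setminus\{0\}$, so $\rot(F)$ lies on a rational-slope line that contains no rational point.

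In the second step you invoke a general statement: a nondegenerate rotation segment of rational slope must lie on a line through rational points. That is not an available theorem of Le Calvez--Tal or D\'avalos. It is the rational-slope case of the Franks--Misiurewicz conjecture. The forcing-theory results you allude to do not exclude a segment lying on a line with no rational points. The theorem the paper cites, \cite[Theorem~1]{KoropeckiPasseggiSambarino2021}, establishes exactly this exclusion for \emph{extensions of irrational rotations}, as the title of that paper says. So your claims that this step ``no longer uses the factor'' and is ``exactly the content of the cited theorem'' are wrong. After the reduction you discard the one hypothesis under which the conclusion is known. What remains is an unproved assertion stronger than the lemma, not a citation.

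\textbf{Your direct sketch does not close the gap.} Distinct vertical speeds do not by themselves force a topological horseshoe. The shear $(x,y)\mapsto(x,y+\phi(x))$ with $\phi$ nonconstant is a counterexample: its rotation set is $\{0\}\times[\min\phi,\max\phi]$ and it has zero entropy. In your sketch the irrationality of the horizontal position $c$ enters only in the final contradiction. It would have to be used \emph{before} any horseshoe or periodic orbit can be produced. The natural inputs are exactly what the factor provides:
\begin{itemize}
\item bounded deviations in the direction $w$;
\item no periodic points, since $h(p)=h(f^np)=h(p)+n\alpha$ is impossible;
\item every $f$-invariant measure projects under $h$ to Lebesgue measure, so orbits with different vertical speeds keep returning to the same fibres.
\end{itemize}
Your proposal gives no mechanism that turns this information into a contradiction. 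Supplying that mechanism is the actual content of the lemma.
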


The next statement combines \cite[Theorem~A and Corollary~1.1]{Potrie2012}.

\begin{lemma}\label{lem:potrie-transitivity}
Let $f$ be a totally irrational pseudo-rotation. If open sets $U,V\subset\T^2$ both meet $\Omega(f)$, then
$f^n(U)\cap V\neq\varnothing$ for some $n\geq1$. In particular, if $f$ is nonwandering, then it is topologically
transitive.
\end{lemma}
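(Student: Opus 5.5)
The plan is to read the first assertion off Potrie's main theorem and to obtain the second as a formal consequence; essentially no new argument is needed.

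\emph{First assertion.} I would match the hypotheses of \cite[Theorem~A]{Potrie2012} with ours. That theorem concerns homeomorphisms in $\operatorname{Homeo}_0(\T^2)$ whose rotation set is a single totally irrational vector, which is exactly our definition of a totally irrational pseudo-rotation. Here I am relying on \eqref{eq:uniform-rotation}: it makes the singleton rotation set coincide with the uniform-convergence formulation used in the introduction. The conclusion I expect to extract is the following. For any two open sets $U,V$ meeting $\Omega(f)$, there is $n\geq1$ with $f^n(U)\cap V\neq\varnothing$. In other words, the dynamics is ``transitive relative to the nonwandering set''. If Theorem~A is phrased differently, I would still reduce to this formulation. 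For example, it might say that $\Omega(f)$ is contained in a single chain-transitive class and that there are no wandering strips or disks separating points of $\Omega(f)$. I would recall the mechanism behind it. If some $U$ meeting $\Omega(f)$ never reached $V$, one could build an $f$-invariant open set, the union of the forward images of $U$. This set is disjoint from $V$ and still meets $\Omega(f)$. Potrie's Brouwer-theoretic analysis shows that its filled closure is an essential annulus or a bounded disk. An annulus would force a rational relation among $1,\alpha,\beta$. A bounded disk would give a periodic or wandering region incompatible with the irrational rotation. This step is the only one with content, and its main obstacle lies entirely inside the cited theorem. The difficulty is excluding invariant essential annuli. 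Total irrationality is essential there, since an invariant annulus would project to a rational direction in homology.

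\emph{Second assertion.} This part is immediate and corresponds to \cite[Corollary~1.1]{Potrie2012}. If $f$ is nonwandering, then $\Omega(f)=\T^2$. Hence every nonempty open set meets $\Omega(f)$. The first assertion then gives, for arbitrary nonempty open $U,V$, some $n\geq1$ with $f^n(U)\cap V\neq\varnothing$. This is the definition of topological transitivity from Section~\ref{sec:preliminaries}.
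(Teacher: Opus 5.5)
Your proposal is correct and matches the paper. The paper gives no independent argument: it combines Potrie's Theorem~A, which is the relative transitivity statement for open sets meeting $\Omega(f)$ under a singleton totally irrational rotation set, with Corollary~1.1, whose content is exactly your one-line deduction from $\Omega(f)=\T^2$. Your sketch of the mechanism inside Theorem~A is not needed and is only heuristic: for instance, $\bigcup_{n\geq1}f^n(U)$ is merely forward-invariant, and your annulus/disc dichotomy omits the fully essential case. It should therefore not be presented as part of the proof.
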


Recall that an essential annular continuum in the torus has an open annulus as its complement. In the
open annulus $\mathbb A=\R\times\T^1$, an essential annular continuum is a compact continuum whose complement consists of exactly two
components, one containing each end. A \textbf{circloid} is an inclusion-minimal essential annular continuum.
We use the following consequence of \cite[Theorem~1 and Corollary~4.3]{JagerPasseggi2015}.

\begin{lemma}\label{prop:canonical-fibres}
Let $f$ be a totally irrational pseudo-rotation and let $h:\T^2\to\T^1$ be a continuous surjection homotopic to a
coordinate projection. If $hf=R_\alpha h$, where $\alpha$ is irrational, then every fibre of $h$ is an essential
annular continuum.
\end{lemma}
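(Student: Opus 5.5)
The plan is to reduce the statement to a fibrewise statement in the lift and then apply the cited result of Jäger and Passeggi. First, since $f$ is a totally irrational pseudo-rotation, fix a lift $F$ with $\rot(F)=\{(\alpha',\beta')\}$. Let $H:\R^2\to\R$ be a lift of $h$. Because $h\simeq\pr_1$, the difference $H-\pr_1$ is bounded and $\Z^2$-periodic; the other coordinate projection is handled symmetrically. The semiconjugacy lifts to $H\circ F=H+\alpha+m$ for some integer $m$. Replacing $F$ by $F-(m,0)$, we may assume $H\circ F=H+\alpha$. Iterating gives $\pr_1(F^n(z)-z)=n\alpha+O(1)$, so the horizontal rotation number equals $\alpha$ and horizontal deviations are bounded. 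This is the analogue of the bounded-deviation lemma referred to in the preliminaries. In particular, $f$ has bounded mean motion in the horizontal direction.

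Next, I would show that each fibre $h^{-1}(\theta)$ is compact and connected and that its complement is an open annulus. Compactness is immediate. For essentiality and connectedness, I would invoke \cite[Theorem~1 and Corollary~4.3]{JagerPasseggi2015}. As I recall, these results state that a totally irrational pseudo-rotation with bounded deviations in a rational direction has an invariant foliation-like decomposition. Concretely, the semiconjugacy to the irrational rotation obtained from bounded deviations has fibres that are essential annular continua, namely circloids together with their bounded complementary pieces. The task is then to check that our $h$ coincides with, or refines compatibly, the canonical semiconjugacy they build.

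For that step, I would use uniqueness of the factor. Any two continuous maps $h,h'\simeq\pr_1$ semiconjugating $f$ to $R_\alpha$ differ by a constant. Indeed, $h-h'$ lifts to a bounded function $\phi$ with $\phi\circ f=\phi$. Total irrationality, via Lemma~\ref{lem:potrie-transitivity} applied on $\Omega(f)$ together with the fact that $h$ is determined by values on a minimal set, should force $\phi$ to be constant. Equivalently, the horizontal lift can be written as $H(z)=\lim_n \frac1n\sum_{j<n}\bigl(\pr_1 F^j(z)-j\alpha\bigr)$ up to an additive constant. This formula identifies $h$ with the Jäger–Passeggi map, so its fibres inherit essential annularity.

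The main obstacle is this identification step, and in particular the connectedness of fibres, since a priori a fibre of an arbitrary factor map could be disconnected. If the citation does not directly cover arbitrary $h$, I would argue directly in the annulus $\R^2/\langle(0,1)\rangle$. Fix a lifted fibre $\tilde C=H^{-1}(t)$. It separates the sets $\{H<t-\epsilon\}$ and $\{H>t+\epsilon\}$, which contain left and right half-planes. Minimal rotation then gives that the components of $\{H<t\}$ and $\{H>t\}$ reaching the ends are each unique. Any bounded component of $\{H\neq t\}$ would have its iterates under $F^n\circ T$ ($T$ a suitable integer translation) realising all values in a dense set, contradicting boundedness of $H$ on compacta. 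From this, the complement of $\tilde C$ has exactly two components, and $\tilde C$ is a connected essential annular continuum.
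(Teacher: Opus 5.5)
Your main line is the paper's. \cite[Section~3]{JagerPasseggi2015} supplies a semiconjugacy $h_0\simeq\pr_1$ to $R_\alpha$ with essential annular fibres, and $h=R_c h_0$ follows because the lifted difference $H-H_0$ is a continuous $f$-invariant function, constant on $\Omega(f)$ by Lemma~\ref{lem:potrie-transitivity} and hence everywhere, since $\omega$-limit sets lie in $\Omega(f)$. The paper gets this uniqueness by citing \cite[Corollary~4.3]{JagerPasseggi2015}, whose hypothesis Lemma~\ref{lem:potrie-transitivity} verifies, so your direct argument simply unpacks that citation.

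Drop the Birkhoff-average formula, which fails once $f$ has several ergodic measures, because the integrals of the periodic function $H-\pr_1$ can differ between them. Also drop the fallback paragraph, whose bounded-component argument gives no contradiction. Neither is needed, since uniqueness already covers an arbitrary $h$.
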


\begin{proof}
The map $h$ is homotopic to a coordinate projection. Thus the construction in
\cite[Section~3, Lemmas~3.1--3.3]{JagerPasseggi2015} gives a semiconjugacy $h_0$ to $R_\alpha$
with essential annular fibres. Lemma~\ref{lem:potrie-transitivity} verifies the external-transitivity
hypothesis on the nonwandering set in \cite[Corollary~4.3]{JagerPasseggi2015}. That corollary gives
$h=R_c h_0$ for some $c\in\T^1$.
The assertion follows.
\end{proof}

For the Dehn-twist case, we use the terminology of \cite[Section~2.4.2]{Kocsard2021}. An open subset of the torus
is \textbf{inessential} if every loop in it is contractible in the torus. A subset is inessential if it has an
inessential open neighbourhood. A set is \textbf{fully essential} if its complement is inessential.
For a domain $U\subset\T^2$, this means that $\T^2\setminus U$ is contained in a topological disc.
A torus homeomorphism is \textbf{eventually annular} if some iterate $f^q$, $q\geq1$, has a lift $G$ and a nonzero vector $v\in\Z^2$ such
that $|\langle G^n(z)-z,v\rangle|$ is bounded uniformly in $z\in\R^2$ and $n\in\Z$.
The following statement is \cite[Proposition~2.11]{Kocsard2021}.

\begin{lemma}\label{lem:fully-essential-recurrence}
Let $f$ be a torus homeomorphism without periodic points. Assume that $f$ is not eventually annular, and let
$x\in\Omega(f)$. For every sufficiently small open disc $B$ containing $x$, the component of
$\bigcup_{n\in\Z}f^n(B)$ containing $B$ is fully essential.
\end{lemma}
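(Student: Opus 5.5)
Since the paper takes this statement from \cite[Proposition~2.11]{Kocsard2021}, my plan is to reconstruct the standard argument. It rests on a trichotomy for the periodic open set generated by a small disc around a nonwandering point. Fix $x\in\Omega(f)$ and an open disc $B\ni x$ so small that $B$ is inessential, and let $U$ be the component of $\bigcup_{n\in\Z}f^n(B)$ containing $B$. Because $x$ is nonwandering, $f^p(B)\cap B\neq\varnothing$ for some $p\geq1$. The components of the orbit union are permuted by $f$, so $f^p(U)=U$. Every invariant connected open subset of $\T^2$ is of exactly one of three kinds: inessential, essential but not fully essential (annular), or fully essential. I will rule out the first two.

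\emph{Inessential case.} Let $\tilde U$ be a component of $\pi^{-1}(U)$. Since $U$ is inessential, $\pi$ restricted to $\tilde U$ is injective, and the translates $\tilde U+v$, $v\in\Z^2\setminus\{0\}$, are pairwise disjoint. The relation $f^p(U)=U$ gives a lift $G$ of $f^p$, with the integer translation absorbed into $G$, such that $G(\tilde U)=\tilde U$. Let $\widehat U$ be the filling of $\tilde U$, namely $\tilde U$ together with the bounded components of its complement. Then $\widehat U$ is a simply connected open subset of $\R^2$ invariant under $G$, hence homeomorphic to the plane. A lift $\tilde x\in\tilde U$ of $x$ is nonwandering for $G|_{\widehat U}$: a return $f^{mp}(B')\cap B'\neq\varnothing$ of a small disc $B'\ni x$ lifts to a return inside $\tilde U$, because the lifted disc and its images lie in $\tilde U$ and $\pi$ is injective there. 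Brouwer's lemma for the orientation-preserving homeomorphism $G|_{\widehat U}$ of the plane then forces a fixed point, since a fixed-point-free map has no nonwandering points. This fixed point projects to a periodic point of $f$, contradicting the hypothesis.

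\emph{Annular case.} Here $U$ contains an essential simple closed curve of some primitive class $v$, while its complement contains an essential closed curve as well, necessarily of the same class $v$. Any lift $\tilde U$ is therefore invariant under translation by $v$ and is contained in a strip $\{z:|\langle z,v^\perp\rangle-c|\le K\}$ between two lifts of that complementary curve. The relation $f^p(U)=U$ gives a lift $G$ of $f^p$ with $G(\tilde U)=\tilde U$, so $\langle G^n(z)-z,v^\perp\rangle$ is bounded for $z\in\tilde U$. To reach all $z\in\R^2$, I would use that $G$ maps each lift of the complementary curve to within bounded distance of another such lift, and preserves the strips between consecutive ones. The $\Z^2$-periodicity of $G$ up to the linear part $A$ then gives a uniform bound. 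This says that $f$ is eventually annular, with vector $v^\perp\in\Z^2$ up to scaling, again a contradiction. Consequently $U$ must be fully essential.

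The main obstacle is the annular case. There one must check that the linear part of $f^p$ fixes the class $v$, which follows from $f^p(U)=U$. One must also pass from boundedness on $\tilde U$ to a bound uniform over all $z$. This requires controlling the complementary essential continua, not just $U$, and handling classes $k\neq0$ where $A$ is not the identity.
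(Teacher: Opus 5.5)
The paper gives no argument for this lemma; it simply quotes \cite[Proposition~2.11]{Kocsard2021}. Your route is the standard one: the trichotomy for the periodic domain $U$, Brouwer's lemma in the inessential case, and bounded deviations in the annular case. The inessential case is sound once two details are fixed. First, take $p$ to be the least period of $U$. Then every return time of a disc $B'\subset B$ is automatically a multiple of $p$, which you use when you write $f^{mp}(B')\cap B'\neq\varnothing$. Second, Brouwer's lemma needs $G$ to preserve orientation. This holds in the paper's application to $\operatorname{Homeo}_k(\T^2)$.

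The annular case has a genuine gap. You assert that $\T^2\setminus U$ contains an essential closed curve, and this is false in general. Not being fully essential only means that the complement is not contained in a disc. The complement can be an essential continuum with no closed curve at all, such as a pseudo-circle. Your strip and your uniform bound are both built on lifts of this curve, and the passage from $z\in\tilde U$ to all $z\in\R^2$ is only sketched.

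The missing observation is that $U$ provides its own barriers, so you never need to control the complementary continua. Let $\gamma\subset U$ be an essential simple closed curve of class $v$, choose $w$ with $\det(v,w)=1$, and let $\tilde\gamma\subset\tilde U$ be a lift. The stabiliser of $\tilde U$ in $\Z^2$ is $\Z v$; otherwise $U$ would contain a loop independent of $v$ and would be fully essential. Hence the sets $\tilde U+nw$, $n\in\Z$, are pairwise disjoint, and $\tilde U$ lies between $\tilde\gamma-w$ and $\tilde\gamma+w$. This is the argument already used in the identity case of Lemma~\ref{lem:invariant-open-finite-fibre}.

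Let $A$ be the linear part of $G$. The relation $G(\tilde U+v)=\tilde U+Av$ forces $Av=\pm v$, and then $\det A=\pm1$ gives $Aw\equiv\pm w\pmod v$. So $G^2$ preserves every $\tilde U+nw$. Now fix $j\in\Z$ and look at the band between $\tilde\gamma+nw$ and $\tilde\gamma+(n+1)w$. Its image under $G^{2j}$ is bounded by curves lying in $\tilde U+nw$ and $\tilde U+(n+1)w$. If the image met $\tilde\gamma+(n+2)w$, it would contain that whole line. Applying $G^{-2j}$ would then place part of the connected set $\tilde U+(n+2)w$ below $\tilde\gamma+(n+1)w$, which is impossible. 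The same holds for $\tilde\gamma+(n-1)w$. So every $G^{2j}$ maps this band into the band between $\tilde\gamma+(n-1)w$ and $\tilde\gamma+(n+2)w$. This bounds $|\langle G^{2j}(z)-z,v^\perp\rangle|$ uniformly in $z\in\R^2$ and $j\in\Z$, so $f^{2p}$ is annular. The argument covers every linear part, including the Dehn-twist classes, without the separate treatment you anticipated.
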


We next recall the topology of thin annular continua. Let $A\subset\mathbb A$ be an essential annular continuum.
Write $U^-(A)$ and $U^+(A)$ for the components of $\mathbb A\setminus A$ containing the lower and upper ends,
respectively. Their boundaries are denoted by $A^-=\partial U^-(A)$ and $A^+=\partial U^+(A)$. The following facts are given in
\cite[Lemma~2.3, the discussion before Lemma~5.7, and Lemma~5.7]{JagerPasseggi2015}.

\begin{lemma}\label{lem:thin-annular-sides}
If $A\subset\mathbb A$ is a thin essential annular continuum, then $A=A^-\cup A^+$ and $A^-\cap A^+$ is the
unique circloid contained in $A$.

Let $p:\mathbb A\to\R$ be proper and continuous. Assume that $p$ tends to $-\infty$ at the lower end and to
$+\infty$ at the upper end. Suppose that every fibre $A_t=p^{-1}(t)$ is a thin essential annular continuum. Then
\begin{equation}\label{eq:prelim-sided-limits}
 \lim_{s\nearrow t}^{\mathcal H}A_s=A_t^- ,
 \qquad
 \lim_{s\searrow t}^{\mathcal H}A_s=A_t^+ .
\end{equation}
\end{lemma}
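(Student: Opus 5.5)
The statement is quoted from \cite{JagerPasseggi2015}, so the plan is to reconstruct its two parts from planar topology: first the decomposition $A=A^-\cup A^+$ with $A^-\cap A^+$ the circloid, then the one-sided Hausdorff limits.

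For the first part, let $A$ be thin. The complement of $A$ is $U^-(A)\cup U^+(A)$ and $A$ has empty interior. Hence every point of $A$ is a limit of points of the complement, so it lies in $\overline{U^-}\cup\overline{U^+}$. Since it is not in either open set, it lies in $A^-\cup A^+$. For the circloid, I would first show that $C=A^-\cap A^+$ is an essential annular continuum.
\begin{itemize}
\item[] Let $V^\pm$ be the components of $\mathbb A\setminus A^\pm$ containing the opposite end. Standard arguments show that $A^\pm$ are continua separating the ends; here one uses that boundaries of simply-connected-at-infinity ends in the annulus are continua.
\item[] Then $C$ is the intersection of the two closed sets $\mathbb A\setminus U^+$ and $\mathbb A\setminus U^-$, each of which is a closed "half-annulus" with connected boundary.
\item[] Use unicoherence of the sphere (after compactifying both ends) to get that $C$ is connected and separates the ends.
\end{itemize}
For minimality, suppose $C'\subset A$ is any essential annular continuum. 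Then $C'$ separates $U^-$ from $U^+$. Each point of $C$ is accessible, as a limit, from both $U^-$ and $U^+$. If $x\in C\setminus C'$, a small disc around $x$ avoids $C'$. That disc meets both $U^-$ and $U^+$, so one could join the two ends in the complement of $C'$, a contradiction. Hence $C\subset C'$. This shows that $C$ is the unique circloid in $A$ and is contained in every essential annular subcontinuum.

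For the limits I would fix $t$ and take $s\nearrow t$. Properness of $p$ keeps the $A_s$ in a fixed compact set, so Hausdorff subsequential limits $L$ exist. Each such $L$ is a continuum contained in $A_t$, by continuity of $p$.
\begin{itemize}
\item[] To get $L\subset A_t^-$, note that for $s<t$ the fibre $A_s$ lies in $\{p<t\}$. Each point of $A_t\setminus A_t^-$ has a neighbourhood contained in $\overline{U^+(A_t)}\cup\dots$. Any point of $A_t^+\setminus A_t^-$ has a neighbourhood disjoint from $\overline{U^-(A_t)}$. But $\{p<t\}\subset U^-(A_t)$: this set is connected, or at least its relevant part is, since it is a union of the fibres $A_s$ lying below and each $A_s$ separates. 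Since $\{p<t\}\subset U^-(A_t)$, points of $L$ must lie in $\overline{U^-(A_t)}\cap A_t=A_t^-$.
\item[] Conversely, for $x\in A_t^-$ take $y\in U^-(A_t)$ close to $x$. Then $p(y)<t$, and every fibre $A_s$ with $p(y)<s<t$ separates $y$ from $A_t$. Hence $A_s$ meets a small arc joining $y$ to $x$ inside $U^-\cup\{x\}$, so $A_s$ comes close to $x$.
\end{itemize}
This gives Hausdorff convergence to $A_t^-$. The case $s\searrow t$ is symmetric.

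The main obstacle is the identification $\{p<t\}=U^-(A_t)$, which uses that every fibre is a separating continuum. I would prove it by noting two things. First, $\{p<t\}$ contains the lower end. Second, every point with $p<t$ is joined to the lower end inside $\{p<t\}$; this uses the nested separating fibres and the fact that $\{p<t\}$ is open and has no bounded components, by properness.
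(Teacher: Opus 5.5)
The paper gives no proof of this lemma; it imports it from J{\"a}ger--Passeggi. Your self-contained reconstruction is largely sound. The decomposition $A=A^-\cup A^+$ is correct. So is the argument that $A^-\cap A^+$ lies in every essential annular subcontinuum of $A$, and so is the subsequential-limit proof of the one-sided Hausdorff limits.

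\textbf{The gap.} It lies in the step proving that $C=A^-\cap A^+$ is itself an essential annular continuum. The sets you feed to unicoherence, $\mathbb A\setminus U^+$ and $\mathbb A\setminus U^-$, intersect in $A$, not in $C$. Unicoherence applied to them only recovers connectedness of $A$. The step also never uses thinness, and it must. For the closed annulus $A=[0,1]\times\T^1$ one has $A^-\cap A^+=\varnothing$.

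\textbf{The fix.} In the two-point compactification $S^2$, use instead $X^\pm=\overline{U^\pm}\cup\{\pm\infty\}$. These sets are closed and connected, and they cover $S^2$ precisely because $A=A^-\cup A^+$. Their intersection is $A^-\cap A^+$, because $A^\pm\subset A$ is disjoint from $U^\mp$. Unicoherence of $S^2$ then makes $C$ a nonempty continuum.

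Separation needs one more observation. $S^2\setminus C$ is the disjoint union of the relatively closed sets $X^-\setminus C$ and $X^+\setminus C$. Each is connected, since it lies between $U^\pm\cup\{\pm\infty\}$ and its closure. Hence $\mathbb A\setminus C$ has exactly two components, one at each end, as the paper's definition of an essential annular continuum requires. Your minimality argument then shows that $C$ is the unique circloid in $A$.

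\textbf{Two smaller points in the second part.} First, the identification you call the main obstacle is immediate, but your justification is not right as stated. Properness does not exclude bounded components of $\{p<t\}$; the essential-annular hypothesis on $A_t$ does. Since $\mathbb A\setminus A_t=\{p\neq t\}$ has exactly two components and $p-t$ has constant sign on each, the behaviour of $p$ at the ends gives $U^-(A_t)=\{p<t\}$ and $U^+(A_t)=\{p>t\}$. This is how the paper argues at \eqref{eq:fibre-complement-sides}.

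Second, in the converse inclusion you ask for an arc from $y$ to $x$ inside $U^-(A_t)\cup\{x\}$. That presupposes that $x$ is accessible from $U^-(A_t)$, which fails for general thin fibres such as pseudo-circles. It is also unnecessary. A short segment from $y$ to $x$ in a chart meets $A_s$ for every $s\in[p(y),t]$, by the intermediate value theorem applied to $p$. Your subsequence argument then goes through unchanged.
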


The following elementary facts connect local connectedness with Jordan curves.

\begin{lemma}\label{lem:locally-connected-separator}
A continuous image of a circle in a metric space is locally connected. Every locally connected continuum which
separates the sphere contains a Jordan curve.
\end{lemma}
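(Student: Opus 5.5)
The statement to prove is Lemma~\ref{lem:locally-connected-separator}, which has two parts; I would treat them separately.

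\emph{First assertion.} Let $g:S^1\to X$ be continuous, with image $K=g(S^1)$. Since $S^1$ is compact, $g$ is a closed quotient map onto $K$. I would prove local connectedness directly. Fix $y\in K$ and an open neighbourhood $V$ of $y$ in $K$. The fibre $g^{-1}(y)$ is compact and lies in the open set $g^{-1}(V)$. Cover it by finitely many open arcs $I_1,\dots,I_m\subset g^{-1}(V)$, each meeting $g^{-1}(y)$, and let $W=I_1\cup\dots\cup I_m$. Then $C=g(W)$ is a finite union of connected sets, each containing $y$, so $C$ is connected and $y\in C\subset V$. Moreover $C$ is a neighbourhood of $y$ in $K$. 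Indeed, $K\setminus C\subset g(S^1\setminus W)$, which is compact and does not contain $y$, so its complement is an open set containing $y$ and contained in $C$. Thus $K$ is connected im kleinen at every point, hence locally connected. Alternatively, one can cite the Hahn--Mazurkiewicz theorem.

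\emph{Second assertion.} Let $K\subset S^2$ be a locally connected continuum with $S^2\setminus K$ disconnected. I would invoke the classical plane-topology fact (Kuratowski, Whyburn): if a locally connected continuum separates the sphere, then the boundary of each complementary component is a locally connected continuum containing a simple closed curve. To make the plan concrete, I would carry out the following steps.
\begin{enumerate}
\item Choose a complementary component $U$ of $S^2\setminus K$. Because $K$ separates, $S^2\setminus\overline U$ is nonempty.
\item Use that $K$ is locally connected to show $\partial U$ is locally connected (Torhorst's theorem).
\item Note that $U$ is simply connected, since each component of the complement of a continuum in $S^2$ is. Hence the Riemann map $\phi:\mathbb D\to U$ extends continuously to $\overline{\mathbb D}$ by Carathéodory's theorem, and $\phi(S^1)=\partial U$.
\item Show $\partial U$ is not a dendrite. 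A dendrite does not separate $S^2$, yet $\partial U$ separates $U$ from the nonempty open set $S^2\setminus\overline U$.
\item Conclude that $\partial U$, being a locally connected continuum that is not a dendrite, contains a simple closed curve. This uses the standard characterisation of dendrites as locally connected continua containing no simple closed curve.
\end{enumerate}
Since $\partial U\subset K$, the Jordan curve lies in $K$.

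\emph{Main obstacle.} The delicate step is the dendrite dichotomy in steps 4 and 5, namely that a locally connected continuum without simple closed curves cannot separate $S^2$. I would cite it from Whyburn's \emph{Analytic Topology} or Kuratowski's \emph{Topology II} rather than reprove it. The paper calls these facts elementary, so a short proof with citations should suffice.
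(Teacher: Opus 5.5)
Your proof is correct, and the first assertion is argued essentially as in the paper. The paper covers the image by the images of finitely many short arcs, using uniform continuity, and takes the union of those containing the point. You instead cover the fibre $g^{-1}(y)$ by arcs inside $g^{-1}(V)$. In both versions the leftover pieces have compact image missing $y$, so the chosen union is a connected neighbourhood.

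For the second assertion, your steps 4--5 rely on the same dichotomy as the paper: a locally connected continuum without a simple closed curve is a dendrite, and a dendrite does not separate $S^2$. That dichotomy applies directly to $K$, which is already a locally connected continuum separating the sphere. The paper does exactly this. Your steps 1--3, which pass to $\partial U$ via Torhorst's theorem and the Carath\'eodory extension, are therefore an unnecessary detour. Their only gain is that the Jordan curve lies in the boundary of a prescribed complementary component. That localisation is not needed where the lemma is used: in Lemma~\ref{lem:lc-parameters} the paper separately shows, using thinness, that any Jordan curve in the fibre is essential. Step 3 is redundant even within your route, since Torhorst's theorem already says that $\partial U$ is a locally connected continuum.

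Finally, you cite non-separation of dendrites from Whyburn or Kuratowski, whereas the paper derives it. Dendrites are contractible, so $\check H^1(K)=0$, and Alexander duality then shows that $S^2\setminus K$ is connected.
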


\begin{proof}
Uniform continuity gives a finite cover of the image of a circle by connected compact sets of arbitrarily small
diameter. Fix a point in the image and take the union of the sets containing it. This union is connected.
The remaining sets are compact and do not contain the point, so the union contains a relative neighbourhood.
We obtain connected neighbourhoods of arbitrarily small diameter. Thus the image is locally connected.

A locally connected continuum containing no Jordan curve is a dendrite
\cite[Section~6.1]{HrushovskiLoeserPoonen2014}. It is contractible by
\cite[Proposition~6.5]{HrushovskiLoeserPoonen2014}, so Alexander duality implies that its complement in the sphere
is connected. This proves the second assertion.
\end{proof}

For the topological theory of prime ends on surfaces, see \cite{Mather1982} and
\cite[Section~3]{KoropeckiLeCalvezNassiri2015}; for the conformal viewpoint, see
\cite[Chapter~2]{Pommerenke1992}.

The conformal facts below are used only after capping an end of the annulus. Write
$\mathbb D=\{z\in\mathbb C:|z|<1\}$. We use the prime-end description in \cite[Section~3]{Rempe2008} and the
Carath\'eodory boundary-extension theorem in \cite[Section~2.3]{Pommerenke1992}.

\begin{lemma}\label{lem:prime-end-facts}
Let $D\subset\mathbb C$ be a bounded simply connected domain and let $\varphi:\mathbb D\to D$ be a Riemann map.
It identifies $\overline{\mathbb D}$ with the prime-end compactification of $D$. The impression associated to
$\zeta\in\partial\mathbb D$ is the full cluster set
\begin{equation}\label{eq:prelim-impression}
 I(\zeta)=\{\lim_{n\to\infty}\varphi(z_n):z_n\in\mathbb D,\ z_n\to\zeta\}.
\end{equation}
The impressions cover $\partial D$. The set of pairs
$\{(\zeta,x):\zeta\in\partial\mathbb D,\ x\in I(\zeta)\}$ is a compact subset of
$\partial\mathbb D\times\partial D$. In particular, if $\zeta_n\to\zeta$ and $x_n\to x$ with
$x_n\in I(\zeta_n)$, then $x\in I(\zeta)$.
If $\partial D$ is a Jordan curve, then $\varphi$ extends to a homeomorphism
$\overline{\mathbb D}\to\overline D$ and $I(\zeta)=\{\varphi(\zeta)\}$.
\end{lemma}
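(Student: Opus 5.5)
This is Lemma~\ref{lem:prime-end-facts}; we obtain it by assembling standard conformal facts rather than by a new argument. We would proceed in four steps.

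First, the compactification. By Carath\'eodory's prime-end theorem \cite[Section~2.4]{Pommerenke1992}, the Riemann map $\varphi$ induces a bijection between $\partial\mathbb D$ and the prime ends of $D$. Under this bijection, crosscuts of $\mathbb D$ near $\zeta$ correspond to null-chains of crosscuts of $D$. This gives the identification of $\overline{\mathbb D}$ with the prime-end compactification, following the description in \cite[Section~3]{Rempe2008}.

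Second, the impression. The impression of a prime end is the intersection of the closures of the domains cut off by a defining null-chain. Pulling back by $\varphi$, these domains become a neighbourhood basis of $\zeta$ in $\mathbb D$. So the impression is $\bigcap_r\overline{\varphi(\mathbb D\cap B(\zeta,r))}$, and this is exactly the cluster set in \eqref{eq:prelim-impression}, by a diagonal choice of sequences. Since $D$ is bounded and $\varphi$ is proper onto $D$, every such limit lies in $\partial D$. Conversely, each $x\in\partial D$ is a limit $\varphi(z_n)$ with $|z_n|\to1$. A subsequence of $(z_n)$ converges to some $\zeta\in\partial\mathbb D$, so $x\in I(\zeta)$, and the impressions cover $\partial D$.

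Third, compactness of the set of pairs. This set lies in the compact space $\partial\mathbb D\times\partial D$, so it suffices to show it is closed. Take $\zeta_n\to\zeta$ and $x_n\to x$ with $x_n\in I(\zeta_n)$. For each $n$, choose $z_n\in\mathbb D$ with $|z_n-\zeta_n|<1/n$ and $|\varphi(z_n)-x_n|<1/n$. Then $z_n\to\zeta$ and $\varphi(z_n)\to x$, so $x\in I(\zeta)$. This diagonal step is the only point requiring care, and it is routine.

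Fourth, the Jordan case. If $\partial D$ is a Jordan curve, then $\partial D$ is locally connected and has no cut points. Carath\'eodory's extension theorem \cite[Section~2.3]{Pommerenke1992} then gives a homeomorphic extension $\overline{\mathbb D}\to\overline D$. Continuity of the extension at $\zeta$ forces the cluster set to be the single point $\varphi(\zeta)$.

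There is no real obstacle here. The main thing to check is that the cited sources use the same definition of impression, so that the null-chain description and the cluster-set description agree; the second step settles this.
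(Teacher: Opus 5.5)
Your proposal is correct and follows essentially the same route as the paper: you cite Rempe and Pommerenke for the compactification and the cluster-set description of impressions, prove closedness of the set of pairs by the same diagonal choice of $z_n$ with $|z_n-\zeta_n|<1/n$ and $|\varphi(z_n)-x_n|<1/n$, obtain the covering of $\partial D$ from a convergent subsequence of inverse images, and invoke Carath\'eodory's extension theorem for the Jordan case. Your extra paragraph reconciling the null-chain impression with the cluster set, and checking that impressions lie in $\partial D$, only makes explicit what the paper takes from the cited sources.
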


\begin{proof}
The compactification and the description of impressions are recalled in \cite[Section~3]{Rempe2008}.
The Jordan case follows from the boundary-extension theorem cited above. Since
$\partial\mathbb D\times\partial D$ is compact, it suffices to show that the set of pairs is closed.
Let $\zeta_n\to\zeta$ and $x_n\to x$ with $x_n\in I(\zeta_n)$. Choose $z_n\in\mathbb D$ such that
$|z_n-\zeta_n|<1/n$ and $|\varphi(z_n)-x_n|<1/n$. Then $z_n\to\zeta$ and $\varphi(z_n)\to x$, so
$x\in I(\zeta)$. Finally, approach any point of
$\partial D$ from inside $D$ and take a convergent subsequence of the inverse images. Its limit lies on
$\partial\mathbb D$ and gives an impression containing the chosen point.
\end{proof}

We use the following two forms of the Carath\'eodory kernel theorem
\cite[Theorem~1.8]{Pommerenke1992}.

\begin{lemma}\label{lem:riemann-map-convergence}
Let $D_n,D\subset\mathbb C$ be bounded simply connected domains containing $0$. Assume that they contain a common
disc centred at $0$ and lie in a common bounded disc. Let the Riemann maps $\varphi_n:\mathbb D\to D_n$ and
$\varphi:\mathbb D\to D$ be normalised by $\varphi_n(0)=\varphi(0)=0$, with their complex derivatives
$\varphi_n'(0)$ and $\varphi'(0)$ positive real numbers.
Assume also that one of the following conditions holds:
\begin{enumerate}
\item\label{item:increasing-domains} $D_n\subset D_{n+1}$ and $D=\bigcup_nD_n$;
\item\label{item:jordan-domain-limit} $\partial D$ is a Jordan curve and $\partial D_n\to\partial D$ in the
Hausdorff metric.
\end{enumerate}
Then $\varphi_n\to\varphi$ locally uniformly on $\mathbb D$.
\end{lemma}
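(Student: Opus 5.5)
The plan is to reduce both cases to the Carath\'eodory kernel theorem in the form of \cite[Theorem~1.8]{Pommerenke1992}. We recall that form first. Let the domains $D_n$ contain a common disc centred at $0$, and normalise their Riemann maps as in the statement. Then $\varphi_n\to\varphi$ locally uniformly on $\mathbb D$ if and only if $D_n\to D$ in the sense of kernels with respect to $0$. This means: every compact subset of $D$ lies in $D_n$ for all large $n$, and no subsequence $(D_{n_j})$ has a strictly larger kernel. Equivalently, whenever $G\ni 0$ is a domain all of whose compact subsets lie in $D_{n_j}$ for all large $j$, we have $G\subset D$. The common bounded disc guarantees that $D\neq\mathbb C$ and that the limit is a genuine univalent map onto $D$, so no degenerate case arises. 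So in each case I will verify these two kernel conditions.

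\emph{Case~\eqref{item:increasing-domains}.} Let $K\subset D$ be compact. The sets $D_n$ form an increasing open cover of $K$, so finitely many of them cover $K$, and by monotonicity a single $D_N$ does. Hence $K\subset D_n$ for all $n\ge N$. Now let $G$ be a domain containing $0$ whose compact subsets lie in $D_{n_j}$ for large $j$. Each point of $G$ is a compact set, so it lies in some $D_{n_j}\subset D$. Thus $G\subset D$.

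\emph{Case~\eqref{item:jordan-domain-limit}.} Let $K\subset D$ be compact. Enlarge $K$ to a compact connected set $K'\subset D$ containing $0$, for instance by adjoining finitely many paths in $D$ joining $0$ to a finite cover of $K$ by small closed discs. Put $\delta=\operatorname{dist}(K',\partial D)>0$. For large $n$, the Hausdorff convergence places $\partial D_n$ in the $\delta/2$-neighbourhood of $\partial D$, so $K'\cap\partial D_n=\varnothing$. Since $K'$ is connected and contains $0\in D_n$, we get $K'\subset D_n$.

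For the converse, let $G$ be a domain as above for a subsequence $(D_{n_j})$, and suppose $G\not\subset D$. Since $G$ is connected and contains $0\in D$, it meets $\partial D$ at some point $p$. Choose a closed disc $\overline B\subset G$ centred at $p$. By hypothesis $\overline B\subset D_{n_j}$ for all large $j$. On the other hand, Hausdorff convergence $\partial D_{n_j}\to\partial D$ gives points $q_j\in\partial D_{n_j}$ with $q_j\to p$. Then $q_j\in\overline B\subset D_{n_j}$ for large $j$, which is impossible because $D_{n_j}$ is open. Hence $G\subset D$, and $D$ is the kernel of every subsequence.

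The main point requiring care is the first half of case~\eqref{item:jordan-domain-limit}. Hausdorff closeness of the boundaries alone does not place $K$ inside $D_n$; $K$ could a priori sit in a different complementary component of $\partial D_n$. Connecting $K$ to the base point $0$ inside $D$ removes this ambiguity. The Jordan hypothesis is not essential to the argument above. It only ensures that $\partial D$ is the whole boundary of a domain of the stated type and matches the setting in which the lemma will be applied. With both kernel conditions verified, the kernel theorem yields $\varphi_n\to\varphi$ locally uniformly.
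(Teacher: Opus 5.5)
Your proof is correct. It follows the paper's overall plan: reduce to the kernel theorem, and handle the increasing case directly. In case~(2), you place a compact subset of $D$ inside a compact connected set containing $0$, so that Hausdorff closeness of the boundaries forces it into $D_n$.

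The difference lies in showing that no subsequence has a larger kernel.

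\emph{The paper's route.} It joins each point outside $\overline D$ to infinity by a path avoiding $\partial D$, so that point lies outside $D_n$ for large $n$. It then uses the Jordan hypothesis: a domain meeting the Jordan curve $\partial D$ must also meet the exterior. Hence a kernel containing $0$ cannot leave $D$.

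\emph{Your route.} You use the other half of the Hausdorff convergence: every $p\in\partial D$ is a limit of points $q_j\in\partial D_{n_j}$. This immediately rules out a kernel containing a point of $\partial D$. It is exactly the boundary condition in Pommerenke's formulation of kernel convergence, so your two verifications are precisely its two defining conditions.

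\emph{Comparison.} The paper uses only one half of the Hausdorff convergence, namely that $\partial D_n$ eventually lies in small neighbourhoods of $\partial D$, together with the Jordan curve theorem. You use both halves and never need the Jordan hypothesis. As you observe, your argument proves case~(2) for every bounded simply connected $D$. In the paper's application $\partial D$ is a Jordan curve anyway, so nothing new is gained there, but your step is shorter and more self-contained.
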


\begin{proof}
In the first case, the kernel of every subsequence is $D$. The same holds in the second case.
To see this, fix a compact subset of $D$. It lies in a compact connected subset of $D$ containing $0$.
Hausdorff convergence of the boundaries puts this larger set inside $D_n$ for all large $n$.
Similarly, each point outside $\overline D$ can be joined to infinity by a path avoiding $\partial D$.
The boundaries $\partial D_n$ also avoid this path for all large $n$, so the point lies outside $D_n$. The Jordan boundary separates these two regions. The kernel theorem
now gives the claimed convergence in both cases.
\end{proof}

We will need parameters whose fibres are Jordan curves and at which two kinds of maps are continuous:
the boundary maps and the maps giving intersections with fixed compact sets. The following category facts provide them.
The first assertion is the Baire continuity theorem \cite[Theorem~24.14]{Kechris1995}.
The second follows from the first.

\begin{lemma}\label{lem:baire-continuity}
Let $T$ be a Polish space.
\begin{enumerate}
\item\label{item:baire-functions} If $Y$ is a separable metric space and $F:T\to Y$ is a pointwise limit of
continuous maps, then $F$ has a dense $G_\delta$ set of continuity points.
\item\label{item:baire-sections} Let $Z$ be a compact metric space. Write $\mathcal K(Z)$ for the space of nonempty
compact subsets of $Z$, with the Hausdorff metric. If $A:T\to\mathcal K(Z)$ is upper semicontinuous,
then $A$ has a dense $G_\delta$ set of continuity points.
\end{enumerate}
\end{lemma}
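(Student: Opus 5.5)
Part (1) is the Baire continuity theorem, so I would simply cite \cite[Theorem~24.14]{Kechris1995}. For part (2) I would give a direct argument rather than route it through (1). The idea is that upper semicontinuity already supplies half of Hausdorff continuity. Continuity in $\mathcal K(Z)$ is equivalent to upper plus lower semicontinuity. So it suffices to find a dense $G_\delta$ set of points where $A$ is also lower semicontinuous. Lower semicontinuity at $t$ means: for every open $V$ meeting $A(t)$, the set $A(s)$ meets $V$ for all $s$ near $t$.

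Fix a countable basis $\mathcal V$ of open sets of $Z$. For $V\in\mathcal V$ put
\[
 B_V=\{t\in T: A(t)\cap\overline V\neq\varnothing\}.
\]
First step: $B_V$ is closed. Its complement is $\{t: A(t)\subset Z\setminus\overline V\}$, which is open by upper semicontinuity. Hence the boundary $\partial B_V=B_V\setminus\operatorname{int}B_V$ is closed and nowhere dense. The set
\[
 G=T\setminus\bigcup_{V\in\mathcal V}\partial B_V
\]
is then a countable intersection of dense open sets. Since $T$ is Polish, the Baire category theorem makes $G$ a dense $G_\delta$ set.

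Second step: $A$ is lower semicontinuous at each $t\in G$. Let $V$ be open with $A(t)\cap V\neq\varnothing$ and pick $x$ in this intersection. By regularity of the metric space $Z$, choose $W\in\mathcal V$ with $x\in W$ and $\overline W\subset V$. Then $t\in B_W$. Since $t\notin\partial B_W$, the point $t$ lies in $\operatorname{int}B_W$. Hence for all $s$ near $t$ the set $A(s)$ meets $\overline W\subset V$, as required.

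Third step: combine the two semicontinuities into Hausdorff convergence, using compactness of $Z$ in the standard way. Given $\varepsilon>0$, upper semicontinuity applied to the open $\varepsilon$-neighbourhood of $A(t)$ puts $A(s)$ inside it for $s$ near $t$. For the other inclusion, cover $A(t)$ by finitely many balls of radius $\varepsilon/2$ centred in $A(t)$. Lower semicontinuity makes $A(s)$ meet each of them for $s$ near $t$, so $A(t)$ lies in the $\varepsilon$-neighbourhood of $A(s)$.

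I expect no step to be a real obstacle. The only points needing care are the closedness of $B_V$ and the choice of $W$ with $\overline W\subset V$. One could alternatively deduce (2) from (1) by writing $t\mapsto A(t)$ as a pointwise limit of continuous maps into the separable space $\mathcal K(Z)$. However, the direct argument above avoids constructing such approximations.
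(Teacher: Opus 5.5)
Your argument is correct, but it takes a different route from the paper. The paper deduces (2) from (1). For each point $z$ of a countable dense set $Z_0\subset Z$, the real function $t\mapsto d(z,A(t))$ is lower semicontinuous, by upper semicontinuity of $A$. It is therefore Baire-one, and (1) gives a dense $G_\delta$ set of continuity points. The paper intersects these sets over $z\in Z_0$. It then uses that the functions are $1$-Lipschitz in $z$: a finite-net argument on the compact space $Z$ upgrades pointwise convergence on $Z_0$ to uniform convergence of $d(\cdot,A(s))$ to $d(\cdot,A(t))$, which is Hausdorff convergence.

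You instead work directly with the closed sets $B_V$ and the Baire category theorem. You recover lower semicontinuity of $A$ off the meagre union of the boundaries $\partial B_V$, and then assemble Hausdorff continuity by hand. Your version is self-contained: it needs neither the Baire-one theory behind (1) nor the fact that lower semicontinuous functions are Baire-one. It also uses compactness only of the values $A(t)$, so it works for any separable metric $Z$. The paper's version is shorter once (1) is available, and it justifies the remark that the second assertion follows from the first.

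Your closing remark misidentifies the alternative. The paper's reduction to (1) goes through scalar distance functions, not through approximating $t\mapsto A(t)$ by continuous set-valued maps, so neither route requires constructing such approximations.
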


\begin{proof}
The first assertion is the cited theorem. For the second, fix a countable dense subset $Z_0\subset Z$.
For each $z\in Z_0$, the function $t\mapsto d(z,A(t))$ is lower semicontinuous and hence Baire-one.
Intersect their dense $G_\delta$ sets of continuity points. At a parameter in this intersection, the distance
functions converge at every point of $Z_0$. They are all $1$-Lipschitz in $z$. Approximation by finite nets
therefore gives uniform convergence on $Z$. This is equivalent to Hausdorff convergence of the compact sets.
\end{proof}

\begin{lemma}\label{lem:category-zero-one}
Let $\alpha\in\R\setminus\Q$. An $R_\alpha$-invariant subset of $\T^1$ with the Baire property is either meagre or residual.
\end{lemma}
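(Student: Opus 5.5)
This is the topological zero--one law for minimal rotations. The plan is to combine the Banach category (localisation) theorem with minimality of $R_\alpha$. Let $E\subset\T^1$ be $R_\alpha$-invariant with the Baire property, and assume $E$ is not meagre. The aim is to show that $\T^1\setminus E$ is meagre, so that $E$ is residual.

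First, since $E$ has the Baire property, write $E=U\,\triangle\,M$ with $U$ open and $M$ meagre. Because $E$ is not meagre, $U$ is nonempty. Thus $E$ is comeagre in some nonempty open interval $I\subset U$, meaning $I\setminus E$ is meagre.

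Second, use invariance. Rotations are homeomorphisms, so they preserve meagre sets. Since $R_\alpha^n(E)=E$ for every $n\in\Z$, the set $R_\alpha^n(I)\setminus E=R_\alpha^n(I\setminus E)$ is meagre for each $n$. The orbit $\{n\alpha\}$ is dense because $\alpha$ is irrational, so the translates $R_\alpha^n(I)$ cover $\T^1$. By compactness, finitely many of them already cover $\T^1$. Hence $\T^1\setminus E\subset\bigcup_{n\in N}\bigl(R_\alpha^n(I)\setminus E\bigr)$ for a finite set $N$, which is a finite union of meagre sets and therefore meagre. So $E$ is comeagre.

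It remains to see that comeagre means residual in the paper's sense, that is, containing a dense $G_\delta$. A meagre set lies in a countable union of closed nowhere dense sets. The complement of that union is a $G_\delta$ contained in $E$, and it is dense by the Baire category theorem on the compact space $\T^1$.

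No step is a real obstacle. The only points needing care are using the Baire-property decomposition correctly, to obtain the open set on which $E$ is comeagre, and the final translation from comeagre to containing a dense $G_\delta$.
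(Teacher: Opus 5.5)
Your proposal is correct and follows essentially the same argument as the paper: use the Baire property to find an interval on which the set is residual, translate it by the rotation using invariance, and cover the circle using minimality. The differences are minor. Your compactness reduction to finitely many translates is unnecessary, since a countable union of meagre sets is already meagre. Your explicit passage from comeagre to containing a dense $G_\delta$ spells out a step the paper leaves implicit.
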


\begin{proof}
Let $D$ be such a set and suppose that it is nonmeagre. The Baire property gives a nonempty open interval $I$ on which
$D$ is residual. Invariance shows that $D$ is residual in every $R_{n\alpha}(I)$, $n\in\Z$. These intervals cover the
circle by minimality of $R_\alpha$. Hence
\begin{equation}
 \T^1\setminus D
 \subset \bigcup_{n\in\Z}\bigl(R_{n\alpha}(I)\setminus D\bigr)
\end{equation}
is meagre, and $D$ is residual.
\end{proof}

\section{Many minimal sets}\label{sec:counterexample}

We first give Avila's example. We then prove Theorem~\ref{thm:thin-counterexample} by taking limits of its
conjugates. The conjugating maps fix every point in a Cantor family of minimal sets. We control the dynamics,
the circle factors and the invariant measures throughout the construction.

The relative perturbations and the use of open dense conditions are related to \cite[Section~4]{Potrie2012}.
Potrie preserves a fixed minimal system and enlarges the nonwandering set. His example has a unique minimal set.
Here we retain an entire Cantor family of minimal systems.

\subsection{Avila's example}\label{sec:model}

We use the following formulation of Avila's unpublished construction \cite{AvilaPrivateCommunication}.
The construction blows up two distinct rotation orbits. The first blow-up permits a change in the
second-coordinate displacement across a wandering interval. The second permits the opposite change. These two changes give a continuous
function on the circle with the prescribed average.

\begin{proposition}[Avila \cite{AvilaPrivateCommunication}]\label{prop:denjoy-model}
Let $\rho=(\alpha,\beta)$ be totally irrational. There exist an orientation-preserving Denjoy homeomorphism
$f_0:\T^1\to\T^1$ with unique minimal Cantor set $K$, a monotone degree-one map $\psi:\T^1\to\T^1$, and a continuous
function $u:\T^1\to\R$ with the following properties. The maps
\begin{equation}\label{eq:source-map}
 g(x,y)=(f_0(x),y+u(x)\bmod1),\qquad h_0(x,y)=\psi(x)
\end{equation}
satisfy $h_0g=R_\alpha h_0$, and $g$ has a lift $G$ with $\rot(G)=\{\rho\}$. The closures of the wandering intervals
of $f_0$ form the two distinct orbits of two closed intervals $I^0,I^1$. The function $u$ is integer-valued and locally
constant on $\T^1\setminus(\operatorname{int}I^0\cup\operatorname{int}I^1)$. The minimal sets of $g$ are exactly
$K\times\{s\}$, $s\in\T^1$, and each is uniquely ergodic.
\end{proposition}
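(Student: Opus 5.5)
Start from the standard Denjoy construction. Fix the orbits $\{n\alpha\}$ and $\{n\alpha+\gamma\}$ for some $\gamma\notin\Z\alpha+\Z$, so that the two orbits are distinct. Blow each point $n\alpha$ up into an interval of length $\ell_n$, and each point $n\alpha+\gamma$ into an interval of length $\ell'_n$, with $\sum_n(\ell_n+\ell'_n)<\infty$. This gives a Denjoy homeomorphism $f_0$ with minimal Cantor set $K$. Take the collapsing map $\psi$, monotone of degree one, with $\psi f_0=R_\alpha\psi$; its nontrivial fibres are exactly the closures of the wandering intervals, and $I^0,I^1$ are the intervals over $0$ and $\gamma$. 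Set $h_0=\psi\circ\pr_1$. Then $h_0 g=R_\alpha h_0$ holds automatically, whatever $u$ is.

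Next choose $u$. Pick a continuous $\bar u$ on the circle that will be pushed through $\psi$, and set $u=c+\mathbf 1_{[\gamma,0)}\circ\psi$ off the two intervals $\operatorname{int}I^0,\operatorname{int}I^1$. Here $c\in\Z$, and the indicator is taken along the arc from $\gamma$ to $0$. This is integer valued and locally constant off the two open intervals, because it jumps only at the points $\psi^{-1}(0)=I^0$ and $\psi^{-1}(\gamma)=I^1$. Inside $I^0$ and $I^1$ interpolate continuously: go up by $1$ across $I^1$ and down by $1$ across $I^0$, so the total jumps cancel. Then $u$ is continuous. Its average against the unique $f_0$-invariant measure $\mu_K$ on $K$ is
\[
 c+\operatorname{Leb}\bigl(\psi([\gamma,0)\text{-arc})\bigr)=c+(1-\gamma)\bmod\text{appropriately}.
\]
Choose $\gamma\equiv-\beta$ so that this average equals $\beta$; this needs $\gamma\notin\Z\alpha+\Z$, which total irrationality supplies. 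To allow a general jump, replace the indicator by $m$ times it with $m=1$. That is enough here, since $\mu_K$ pushes forward to Lebesgue measure.

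The rotation set comes from unique ergodicity of $f_0$. Every $f_0$-invariant measure is supported on $K$ and equals $\mu_K$. By Lemma~\ref{lem:measure-facts}\eqref{item:uniform-ergodic}, the Birkhoff sums $\frac1n\sum_{j<n}u(f_0^jx)$ converge uniformly to $\int u\,d\mu_K=\beta$. The first coordinate of the lift of $f_0$ rotates uniformly at rate $\alpha$. So the lift $G(x,y)=(\tilde f_0(x),y+u(x))$ satisfies $\rot(G)=\{(\alpha,\beta)\}$ by \eqref{eq:uniform-rotation}.

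For the minimal sets, first note that every minimal set of $g$ projects onto the minimal set $K$ of $f_0$. Over $K$, $u$ is locally constant and integer valued, so $u\equiv0\bmod1$ on $K$. Hence $g|_{K\times\T^1}=f_0|_K\times\mathrm{id}$, and the minimal sets there are exactly $K\times\{s\}$. Each of these is uniquely ergodic, since it is conjugate to $(K,f_0)$.

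The main obstacle is the compatibility just used. The fibre shift must vanish mod $1$ on $K$, which forces $u$ to be integer valued there, yet the mean of $u$ must be the irrational number $\beta$. This is exactly why two blown-up orbits are needed. The integer value of $u$ on $K$ may change only across a wandering interval, and continuity in the torus is recovered only when the net jump is zero. So one jump of $+1$ at one orbit and one jump of $-1$ at a second, independent orbit produce a step function whose mean, the $\mu_K$-mass between the two gaps, can be tuned to $\beta$. I would check carefully that the $\mu_K$-measure of the arc between $I^1$ and $I^0$ equals the rotation-length between $\gamma$ and $0$. I would also check that total irrationality is exactly the condition making the two orbits distinct.
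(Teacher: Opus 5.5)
Your proposal is correct and follows the paper's construction: blow up two distinct $R_\alpha$-orbits (you use $0$ and $\gamma\equiv-\beta$, the paper uses $0$ and $b=\beta-k$), let $u$ be an integer step function on $K$ whose $\lambda$-mean is $\beta$ because $\psi_*\lambda=\operatorname{Leb}$, and interpolate across the two gaps; then $\rot(G)$ follows from unique ergodicity and the minimal sets from $u\in\Z$ on $K$. Two small fixes are needed: the endpoint values of $I^0,I^1$ must be the one-sided limits, as in the paper's choice $v(a_0)=0$, $v(b_0)=1$, $v(a_b)=1$, $v(b_b)=0$, because $\mathbf 1_{[\gamma,0)}\circ\psi$ taken literally gives both endpoints of each gap the same value and breaks continuity; and the constant must be $c=\lfloor\beta\rfloor$.
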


\begin{proof}
Write $\beta=k+b$, where $k\in\Z$ and $0<b<1$. Total irrationality implies that $0$ and $b$ lie on distinct
$R_\alpha$-orbits. Perform a Denjoy blow-up of these two orbits.  We obtain an orientation-preserving circle
homeomorphism $f_0$ and a monotone degree-one map $\psi:\T^1\to\T^1$ such that
\begin{equation}\label{eq:denjoy-factor}
 \psi\circ f_0=R_\alpha\circ\psi.
\end{equation}
The map $f_0$ has a unique minimal Cantor set $K$. It is also uniquely ergodic. Indeed, $\psi$ sends every $f_0$-invariant probability
measure to Lebesgue measure. Each of the countably many nontrivial fibres has measure zero. Outside their union,
$\psi$ is injective.  Denote the unique invariant measure by $\lambda$.  It is supported on $K$ and
satisfies
\begin{equation}\label{eq:push-measure}
 \psi_*\lambda=\operatorname{Leb}_{\T^1}.
\end{equation}

We construct a continuous function $v:\T^1\to[0,1]$. Give $\T^1$ its positive orientation. Set
$v=1$ on $K\cap\psi^{-1}((0,b))$ and $v=0$ on $K\cap\psi^{-1}((b,1))$. Write $\psi^{-1}(0)=[a_0,b_0]$ and
$\psi^{-1}(b)=[a_b,b_b]$ in the positive orientation.  Set $v(a_0)=0$, $v(b_0)=1$, $v(a_b)=1$ and $v(b_b)=0$.
Interpolate continuously across these two intervals. Extend $v$ constantly across every other complementary interval
of $K$.  The restriction to $K$ is continuous, since the possible changes occur across the two chosen intervals.  This
defines a continuous function with
\begin{equation}\label{eq:v-properties}
 v(K)\subset\{0,1\},
 \qquad
 \int_{\T^1}v\,d\lambda=b.
\end{equation}
The integral follows from \eqref{eq:push-measure}, since the two boundary fibres have zero $\lambda$-measure. Put
$u=k+v$. Its restriction to the complement of the interiors of $I^0=\psi^{-1}(0)$ and $I^1=\psi^{-1}(b)$ is
integer-valued and locally constant.

Choose a lift $F_0:\R\to\R$ of $f_0$ with rotation number $\alpha$, and let $\widetilde u:\R\to\R$ be the $1$-periodic
function induced by $u$.  Define
\begin{equation}\label{eq:denjoy-skew-product}
 G(x,y)=\bigl(F_0(x),y+\widetilde u(x)\bigr).
\end{equation}
This is a lift of the torus homeomorphism $g(x,y)=\bigl(f_0(x),y+u(x)\pmod 1\bigr)$.

For each $s\in\T^1$, the set $K\times\{s\}$ is $g$-invariant because $u$ is integer-valued on $K$.
The restriction of $g$ to this set is conjugate to $f_0|_K$. It is therefore uniquely ergodic and minimal.
These Cantor sets are pairwise disjoint. Conversely, the first-coordinate projection of any minimal set
for $g$ is $K$. On $K\times\T^1$, the second coordinate is invariant. It is therefore constant on every minimal
set. Hence the minimal sets of $g$ are exactly $K\times\{s\}$, $s\in\T^1$.

Lemma~\ref{lem:measure-facts}\eqref{item:uniform-ergodic} and \eqref{eq:v-properties} give
\begin{equation}\label{eq:birkhoff-u}
 \frac1n\sum_{j=0}^{n-1}u(f_0^j(x))
 \longrightarrow \int_{\T^1}u\,d\lambda=k+b=\beta
 \quad\text{uniformly in }x.
\end{equation}
The horizontal displacement of $F_0^n$ divided by $n$ converges uniformly to $\alpha$.  Applying these facts to
\eqref{eq:denjoy-skew-product} shows
\begin{equation}
 \frac{G^n(x,y)-(x,y)}{n}\longrightarrow(\alpha,\beta)
 \quad\text{uniformly in }(x,y),
\end{equation}
and therefore $\rot(G)=\{(\alpha,\beta)\}$.

The map $h_0(x,y)=\psi(x)$ semiconjugates $g$ to $R_\alpha$ and is homotopic to the first-coordinate projection. If
$t$ belongs to either blown-up orbit, then $h_0^{-1}(t)=\psi^{-1}(t)\times\T^1$ is a closed annulus with nonempty
interior. Every other fibre is a circle.
\end{proof}

\subsection{The thin-fibre construction and proof of Theorem~\ref{thm:thin-counterexample}}\label{sec:towers}

We first outline the proof. We use \textbf{approximation by conjugation}, in the spirit of the Anosov--Katok
method \cite[Section~2.1]{FayadKatok2004}. The initial torus map in Proposition~\ref{prop:denjoy-model}
remains fixed. Its conjugates, their inverses and their circle factors converge uniformly. The corresponding
invariant measures converge weakly. The conjugacies fix pointwise the union of a Cantor family of minimal sets.
At each stage, we give positive mass to the next member of a countable basis of open balls. We choose the later
perturbations small enough to preserve all earlier positivity conditions. The limiting measure therefore has full support.

The limiting factor sends this measure to Lebesgue measure on the circle. Each fibre therefore has measure zero. Full support then forces it to have empty interior. At every finite stage, some fibres are closed annuli with nonempty interior. All fibres become thin in the limit.
Summable bounds on the affected base parameters ensure that almost every limiting fibre is a Jordan curve.
Lemma~\ref{lem:oxtoby-ulam} then gives a final change of coordinates which makes the limiting map area-preserving.

Recall the construction in Proposition~\ref{prop:denjoy-model}. The Denjoy homeomorphism $f_0$ has a unique minimal
Cantor set $K$. Its unique invariant probability measure $\lambda$ is supported on $K$ and satisfies \eqref{eq:push-measure}.
The initial torus homeomorphism is $g(x,y)=(f_0(x),y+u(x)\bmod1)$. Its circle factor is $h_0(x,y)=\psi(x)$, so
$h_0g=R_\alpha h_0$. Since $u$ is integer-valued on $K$, we have $g(x,s)=(f_0(x),s)$ for $x\in K$.

Fix any Cantor set $S\subset\T^1$ for the rest of this construction, and put
\begin{equation}
 X=K\times S,\qquad \mu_0=\lambda\times\operatorname{Leb}_{\T^1}.
\end{equation}
Thus $X$ is the union of the pairwise disjoint minimal sets $K\times\{s\}$, $s\in S$. The conjugating maps used below
fix $X$ pointwise. They therefore preserve the dynamics on each of these sets. The measure $\mu_0$ is $g$-invariant and has
support $K\times\T^1$.

Fix one of the intervals $I^0,I^1$ and write it as $I=[a,b]$. Put $A=I\times\T^1$.
Choose a real circle coordinate around $I$. The next lemma moves a boundary point of $A$ into a prescribed open
subset of its interior. It controls the conjugated dynamics and the circle factor. It also gives two support estimates
for the limiting construction.

\begin{lemma}\label{lem:relative-tower}
For every $\varepsilon,\eta,\delta>0$ and every nonempty open set $B\subset\operatorname{int}A$, there are a
homeomorphism $P$, isotopic to the identity relative to $X$, and a point $z\in\partial A$ such that
\begin{align}
 d_0(PgP^{-1},g)&<\varepsilon,
 &d_0(Pg^{-1}P^{-1},g^{-1})&<\varepsilon,
 \label{eq:relative-local-dynamics}\\
 d_0(h_0P^{-1},h_0)&<\eta,
 &P(z)&\in B,
 \label{eq:relative-local-factor}\\
 \mu_0(\operatorname{supp}P)&<\delta,
 &\operatorname{Leb}_{\T^1}\bigl(h_0(\operatorname{supp}P)\bigr)&<\delta.
 \label{eq:relative-local-support}
\end{align}
\end{lemma}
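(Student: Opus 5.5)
The plan is to build $P$ as a finite ``tower'' of conjugated copies of one local push, placed along the orbit of $A$ and truncated at levels where the iterates of $I$ are very short. First I would record the structure of $g$ near the orbit of $A$. The intervals $f_0^j(I)$, $j\in\Z$, are pairwise disjoint and their lengths tend to $0$ as $j\to\pm\infty$. For $j\neq0$ each of them lies in the complement of $\operatorname{int}I^0\cup\operatorname{int}I^1$, so $u$ is constant on it. Since its endpoints lie in $K$, that constant is an integer. Hence, modulo $1$, the map $g^j|_A$ has the form $(x,y)\mapsto(f_0^j(x),y)$ for every $j$, except for the single step leaving $A$. More precisely, for $j\ge1$ we have $g^j(x,y)=(f_0^j x,\,y+u(x)+m_j)$ with $m_j\in\Z$, so only the shear $y\mapsto y+u(x)$ across $A$ itself is nontrivial. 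In particular $g$ preserves the second coordinate on $K\times\T^1$ and fixes $X=K\times S$ levelwise.

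Next I would choose the point and the local push. Since $S$ is nowhere dense and $B$ is open, pick $(x_1,y_1)\in B$ and a small $w>0$ such that the following hold: $[y_1-w,y_1+w]$ is disjoint from $S$, and $[x_1,x_1]\times[y_1-w,y_1+w]$ together with the horizontal segment $\sigma$ from $z=(a,y_1)$ to $(x_1,y_1)$ lies in $B\cup\{$a thin neighbourhood of $\sigma\}$. Let $P_0$ be a homeomorphism supported in a thin neighbourhood $V$ of $\sigma$, with $V\subset(a-\kappa,b)\times(y_1-w,y_1+w)$ for a small $\kappa>0$, such that $P_0(z)\in B$. We may take $P_0$ to be a finger push along $\sigma$ whose vertical displacement is at most $w$. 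Since $V$ lies in the band avoiding $S$, it misses $X$, and $P_0$ is isotopic to the identity relative to $X$. For the shear across $A$, I would make $V$ avoid the region where $u$ varies except inside $A$ and conjugate consistently (see below).

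Then comes the tower. Fix $N$ large, and define $P=g^jP_0g^{-j}$ on $g^j(V)$ for $-N\le j\le N$, with $P=\operatorname{id}$ elsewhere. For large $\kappa^{-1}$ these sets are pairwise disjoint: they lie near the disjoint intervals $f_0^j(I)$, and only finitely many levels are involved. By the structure above, each $g^j(V)$ lies in the band $\T^1\times(y_1-w,y_1+w)$ mod $1$ for $j\neq0$ up to the shear at level $0$, so every level avoids $X$; the levelwise isotopies give the isotopy relative to $X$. Telescoping shows that $PgP^{-1}=g$ except on $g^{N}(V)$ and on $g^{-N-1}(V)$. There the error is governed by the displacement of $g^{\pm N}P_0g^{\mp N}$. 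Its horizontal part is at most the length of $f_0^{\pm N}(I)$ plus $\kappa$-errors, and its vertical part is at most about $w$. Choosing $N$ large and $w,\kappa$ small therefore gives \eqref{eq:relative-local-dynamics} for both $g$ and $g^{-1}$.

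For \eqref{eq:relative-local-factor}, $h_0=\psi\circ\pr_1$ and $P$ moves points within the $x$-neighbourhoods of $f_0^j(I)$, on which $\psi$ is constant up to the $\kappa$-collars. Uniform continuity of $\psi$ therefore gives $d_0(h_0P^{-1},h_0)<\eta$ for $\kappa$ small. For \eqref{eq:relative-local-support}, the set $\operatorname{supp}P$ has $y$-width at most $2w$ on each of $2N+1$ levels, so $\mu_0(\operatorname{supp}P)\le 2w$. Moreover, $h_0(\operatorname{supp}P)$ is a union of $2N+1$ intervals $\psi(\text{$\kappa$-collar of }f_0^jI)$, each of length $<\delta/(2N+1)$ once $\kappa$ is small given $N$.

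The main obstacle is the bookkeeping at level $0$. The shear $y\mapsto y+u(x)$ across $A$ (where $u$ interpolates) tilts the images $g^j(V)$, so they are not literally horizontal bands. One must check that the truncation levels $g^{\pm N}(V)$ still have small vertical extent and avoid $S$. I would handle this by fixing $V$ first, then observing that the levels $j\ge1$ are the integer-shifted sheared image of $V$. This image is a fixed compact set of small diameter in $y$ times a short $x$-interval, and it misses $K\times S$ because $g$ preserves $X$ and $V\cap X=\varnothing$. The remaining choices are made in the order $w$, then $N$, then $\kappa$.
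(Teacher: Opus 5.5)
There is a genuine gap. You use the same push $P_0$ on every level of the tower, and this breaks the forward estimate \eqref{eq:relative-local-dynamics} at the top of the tower. Your telescoping is correct, so look at the top transition. Take $p=g^N(P_0z)$ with $P_0z=(x'',y'')\in B$. Since $g^{N+1}(V)$ lies outside the tower, $PgP^{-1}(p)=g^{N+1}(z)$, whereas $g(p)=g^{N+1}(P_0z)$. Near $A$ one has $g^j(x,y)=(f_0^jx,\,y+u(x)\bmod1)$ for $j\ge1$. Hence the second coordinates of these two points differ by $(y''-y_1)+u(x'')-u(a)$ modulo $1$.

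Across $I$ the function $u$ interpolates between consecutive integers. If $B$ is a small ball about a point where $u\equiv\tfrac12\pmod 1$, this error is about $\tfrac12$, however large $N$ is and however small $w$ and $\kappa$ are. Your claim that the vertical displacement of $g^{N}P_0g^{-N}$ is at most about $w$ overlooks the shear. The shear turns the horizontal displacement of $P_0$ across $I$ into a vertical displacement at every forward level. For the same reason, the forward levels do not have small $y$-diameter, contrary to your last paragraph.

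Tilting $P_0$ cannot repair this. The backward end needs the vertical displacement $\Delta y$ of $P_0$ to be small. The forward end needs $\Delta y+\Delta u$ to be small modulo $1$. At $z$ one has $\Delta u=u(x'')-u(a)\equiv u(x'')\pmod 1$, since $u(a)\in\Z$, and this is about $\tfrac12$ for such $B$.

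The missing idea is to interpolate along the tower instead of repeating one push. The paper takes an isotopy $(k_t)$ from the identity to the push, fixing $J_*\times S$ pointwise. It places $g^jk_{J,t_j}g^{-j}$ on the $j$th level, with $t_0=1$, $t_j=0$ for $|j|\ge N+1$, and $|t_{j+1}-t_j|<\gamma$. Each transition error is then the distance between $g^{j+1}k_{J,t_{j+1}}(w)$ and $g^{j+1}k_{J,t_j}(w)$; since $t_{\pm(N+1)}=0$, there is no jump at the ends.

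For forward levels, the vertical part of this error compares $u(\pr_1k_t(\cdot))$ at two adjacent parameters. Uniform continuity in $t$ bounds it with a $\gamma$ independent of $j$, $N$ and the collar. The horizontal part is small because $f_0^j(J)$ is short for $|j|>L$, and by uniform continuity for the finitely many $|j|\le L$.

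The order of choices is: first $\gamma$; then $N$ large enough for the parameters to descend from $1$ to $0$ in steps below $\gamma$; only then a narrow $J$. The compression $\chi_J$ fixes $I$, preserves $u$ and is $1$-Lipschitz on $J_*$, so $\gamma$ stays valid. It yields $(2N+1)\lambda(J)<\delta$ and $(2N+1)\operatorname{Leb}_{\T^1}(\psi(J))<\delta$. The rest of your outline (avoidance of $X$ via $g(X)=X$, the factor estimate, and the support bounds) is compatible with this fix.
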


\begin{proof}
Choose a fixed interval $J_*\supset I$ whose closure avoids the other interval among $I^0,I^1$. Take
$(x_0,y_0)\in B$ with $y_0\notin S$. Choose an interval $V\ni y_0$ whose closure misses $S$.
An increasing piecewise-linear homeomorphism of $J_*$, equal to the identity near its endpoints, can send $a$ to
$x_0$. Interpolate it linearly with the identity. Multiply the interpolation parameter by a cutoff supported in
$V$ and equal to one at $y_0$. This gives a horizontal isotopy $(k_t)_{0\leq t\leq1}$ supported in $J_*\times V$.
It fixes $J_*\times S$ pointwise and satisfies $k_1(a,y_0)=(x_0,y_0)$.

For each sufficiently narrow interval $J$ with $I\subset J\subset J_*$, choose an increasing circle homeomorphism $\chi_J$ which is the
identity on $I$, maps $J_*$ onto $J$, and is linear on the two collars. We also write $\chi_J$ for the torus map
$(x,y)\mapsto(\chi_J(x),y)$, and put
\begin{equation}
 k_{J,t}=\chi_J k_t\chi_J^{-1}.
\end{equation}
On $J_*$ the maps $\chi_J$ are Lipschitz with constant at most one. We also have $u\circ\chi_J=u$ there, because $\chi_J$ fixes
$I$ and $u$ is constant on each collar. Every compressed isotopy fixes $X$, and
$k_{J,1}(a,y_0)=(x_0,y_0)$ independently of the collar width.

We now vary the isotopy parameter slowly between successive iterates of the annulus. We first bound the allowed
change in this parameter independently of the number of iterates. We then choose the number of iterates and narrow $J$.

The intervals $f_0^j(I)$ are pairwise disjoint, so $|f_0^j(I)|\to0$ as $|j|\to\infty$.
Choose $L$ such that $|f_0^j(I)|<\varepsilon/4$ for $|j|>L$.

For any prescribed finite $N>L+2$, we can choose $J$ narrow enough that the intervals $f_0^j(J)$, $-N-2\leq
j\leq N+2$, have disjoint closures. Require also that $f_0^j(J)$ avoids $I^0$ and $I^1$ when $j\ne0$, and that
\begin{equation}
 |f_0^j(J)|<\varepsilon/2\quad (L<|j|\leq N+2),
 \qquad \operatorname{diam}\psi(J)<\eta.
\end{equation}
The avoidance condition gives the following formulas on $J\times\T^1$:
\begin{equation}\label{eq:tower-single-shear}
 g^j(x,y)=
 \begin{cases}
 (f_0^j(x),y+u(x)\bmod1),&1\leq j\leq N+2,\\
 (f_0^j(x),y),&-N-2\leq j\leq0.
 \end{cases}
\end{equation}
The omitted accumulated terms are integers and therefore vanish on the torus.

There is a number $\gamma>0$, independent of $N$ and of the subsequent collar compression, such that whenever
$|s-t|<\gamma$,
\begin{equation}\label{eq:tower-time-modulus}
 \sup_{z\in J\times\T^1}
 d\bigl(g^j k_{J,s}(z),g^j k_{J,t}(z)\bigr)<\varepsilon
 \qquad (-N-2\leq j\leq N+2).
\end{equation}
Indeed, for $|j|>L$ the horizontal coordinates lie in the interval $f_0^j(J)$, of length less than $\varepsilon/2$.  For
the remaining finitely many $j$, use uniform continuity of $f_0^j$ and the common Lipschitz bound for $\chi_J$.
This gives a bound on $|s-t|$ valid for all these horizontal coordinates. For the vertical estimate, write
$w=(x,y)\in J_*\times\T^1$ and $z=\chi_J(w)$. Since $k_t$ changes only the first coordinate,
\eqref{eq:tower-single-shear} and $u\circ\chi_J=u$ show that the second coordinate of $g^j k_{J,t}(z)$ is $y$ for
$j\leq0$, and $y+u(\pr_1(k_t(w)))\bmod1$ for $j\geq1$. Uniform continuity gives a bound on $|s-t|$
independent of $j$, $J$ and $N$. In the maximum product metric, these two coordinate estimates give
\eqref{eq:tower-time-modulus}.

Choose $N$ sufficiently large and numbers $t_j\in[0,1]$ satisfying
\begin{equation}
 t_0=1,\qquad t_j=0\ (|j|\geq N+1),\qquad
 |t_{j+1}-t_j|<\gamma.
\end{equation}
Only now choose $J$ with the preceding disjointness, avoidance and size properties and, in addition,
\begin{equation}\label{eq:relative-collar-smallness}
 (2N+1)\lambda(J)<\delta,
 \qquad (2N+1)\operatorname{Leb}_{\T^1}(\psi(J))<\delta.
\end{equation}
This is possible because $\lambda(I)=0$ and $\psi(I)$ is a point. Define $P$ on $f_0^j(J)\times\T^1$ by
\begin{equation}
 P=g^j k_{J,t_j}g^{-j}\qquad (-N\leq j\leq N),
\end{equation}
and define it to be the identity elsewhere.  The supports are disjoint, so this defines a homeomorphism. Replacing each
$t_j$ by $s t_j$, $0\leq s\leq1$, gives an isotopy from the identity to $P$. Every piece fixes $X$ because $g(X)=X$. Thus
the isotopy is relative to $X$.

For $z\in f_0^j(J)\times\T^1$, put $w=k_{J,t_j}^{-1}g^{-j}(z)$. To compare $PgP^{-1}(z)$ and $g(z)$,
it is enough to compare
\begin{equation}
  g^{j+1}k_{J,t_{j+1}}(w)\quad\hbox{and}\quad
  g^{j+1}k_{J,t_j}(w).
\end{equation}
Equation \eqref{eq:tower-time-modulus} gives the required forward estimate. The same argument with $j-1$ gives the
inverse estimate. At the first and last annuli, take the adjacent parameter to be zero. Outside the annuli
$f_0^j(J)\times\T^1$ with $-N-1\leq j\leq N+1$, the conjugated maps agree with $g$ and $g^{-1}$, respectively.

The map $P$ preserves each annulus $f_0^j(J)\times\T^1$ setwise. The oscillation of $h_0$ on this annulus is that of
$\psi$ on $J$, since $\psi\circ f_0^j=R_{j\alpha}\circ\psi$. This proves the factor estimate. On the central annulus,
$P=k_{J,1}$, so $z=(a,y_0)$ has $P(z)=(x_0,y_0)\in B$.
Finally, invariance of $\mu_0$ and the support inclusion give
\begin{equation}\label{eq:base-support-control}
 \mu_0(\operatorname{supp}P)\leq(2N+1)\lambda(J),\qquad
 \operatorname{Leb}_{\T^1}\bigl(h_0(\operatorname{supp}P)\bigr)
 \leq(2N+1)\operatorname{Leb}_{\T^1}(\psi(J)).
\end{equation}
Equation~\eqref{eq:relative-collar-smallness} gives both support bounds.
\end{proof}

The next two lemmas apply in both homotopy classes. The first proves that limiting fibres are annular continua.
The second shows that almost every circle fibre of $h_0$ survives the perturbations.

\begin{lemma}\label{lem:annular-fibre-limit}
Let $\psi:\T^1\to\T^1$ be a monotone degree-one map, and let $(H_n)$ be a sequence of torus homeomorphisms
isotopic to the identity.
\begin{enumerate}
\item Define $h_0(x,y)=\psi(x)$. If $h_0\circ H_n^{-1}$ converges uniformly to $h:\T^2\to\T^1$, then every fibre
$h^{-1}(t)$ is compact and connected, and its complement in $\T^2$ is an open annulus.
\item Define $h_0(x,y)=\psi(y)$. If $h_0\circ H_n^{-1}$ converges uniformly to $h:\T^2\to\T^1$, then every fibre
$h^{-1}(t)$ is compact and connected, and its complement in $\T^2$ is an open annulus.
\end{enumerate}
\end{lemma}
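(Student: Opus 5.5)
The plan is to pass to the limit of the approximating factors $h_n=h_0\circ H_n^{-1}$ and to recognise every fibre of $h$ as a decreasing intersection of closed annuli. Both parts are handled together; the second is the first with the coordinates exchanged, so we describe only part (1). Compactness of $h^{-1}(t)$ is immediate from continuity of $h$. For connectedness and the annular complement we work in the cyclic cover $\mathbb A=\R\times\T^1$ determined by the homotopy class of $h_0$, which is $\pr_1$ composed with a degree-one map. Since each $H_n$ is isotopic to the identity, $h_n$ is homotopic to $\pr_1$, and so is $h$ by closedness of homotopy classes. Lift $h$ and $h_n$ to maps $\tilde h,\tilde h_n:\mathbb A\to\R$ that are proper, tend to $\mp\infty$ at the two ends, and satisfy $\tilde h_n\to\tilde h$ uniformly; we normalise the lifts compatibly, using that $\tilde h_n-\pr_1$ is bounded and periodic. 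It is enough to prove that each fibre $\tilde h^{-1}(t)$ is a continuum whose complement in $\mathbb A$ has exactly two components, one containing each end. Because $\tilde h$ takes constant values on fibres and fibres over distinct $t$ are disjoint, the projection of $\tilde h^{-1}(t)$ to the torus is then $h^{-1}(t)$, with complement an open annulus.

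The key observation is that every fibre of $\tilde h_n$ is an essential annular continuum. It is the image under a lift of $H_n$ of either $\{x\}\times\T^1$ or a closed annulus $\psi^{-1}(t)\times\T^1$, and monotonicity of $\psi$ means every sublevel set $\{\tilde h_n\le s\}$ is the image of a closed half-annulus.

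To pass to the limit, set $\epsilon_n=\sup|\tilde h_n-\tilde h|\to0$ and consider
\[
A_n=\tilde h_n^{-1}\bigl([t-\epsilon_n',t+\epsilon_n']\bigr),\qquad \epsilon_n'=\epsilon_n+1/n .
\]
Each $A_n$ is the image of a closed annulus $\psi^{-1}([\cdot,\cdot])\times\T^1$, so it is an essential closed annulus-like continuum that separates the ends. The triangle inequality gives $\tilde h^{-1}(t)\subset A_n$. Conversely, we want $A_n\subset \tilde h^{-1}([t-2\epsilon_n',t+2\epsilon_n'])$, and the points of $A_n$ lie within a bounded strip that does not depend on $n$. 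These sets are not nested, so we pass to a subsequence with rapidly decreasing $\epsilon'_n$. Then $\tilde h_{m}^{-1}([t-\epsilon_m',t+\epsilon_m'])$ is contained in the interior of $\tilde h_n^{-1}([t-\epsilon_n',t+\epsilon_n'])$ for $m>n$, because its $\tilde h$-values lie within $\epsilon_m'+\epsilon_m<\epsilon_n'-\epsilon_n$ of $t$. Finally, $\bigcap_n A_n=\tilde h^{-1}(t)$, because a point outside the fibre has $|\tilde h-t|>0$, which exceeds $2\epsilon'_n$ for large $n$.

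A nested intersection of compact connected sets is connected, which gives connectedness. Each complement $\mathbb A\setminus A_n$ has exactly two components, $\{\tilde h_n<t-\epsilon'_n\}$ and $\{\tilde h_n>t+\epsilon_n'\}$, each connected as the image of an open half-annulus. The complement of the intersection is the increasing union of these complements, so its components are $\{\tilde h<t\}=\bigcup_n\{\tilde h_n<t-\epsilon'_n\}$ and $\{\tilde h>t\}$. Each is connected as an increasing union of connected sets, each contains one end, and they are disjoint and open. Hence the complement has exactly two components.

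I expect the main obstacle to be the careful identification of $\{\tilde h<t\}$ as an increasing union, together with the bookkeeping with lifts, meaning that the lifts $\tilde h_n$ converge uniformly and are proper. Once the lifts are fixed, the rest is the sandwich argument above.
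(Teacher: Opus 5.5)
Your argument in the cover is correct and is essentially the paper's. The paper also lifts to $\R\times\T^1$ and uses that preimages under $\tilde h_n$ of compact intervals and rays are images of closed or half-infinite annuli. It deduces that $\tilde h^{-1}(t)$ is a continuum and that $\{\tilde h<t\}$ and $\{\tilde h>t\}$ are connected. The paper passes to the limit through Hausdorff convergence of $\tilde h_n^{-1}([a-\epsilon_n,b+\epsilon_n])$ rather than through your nested subsequence, which makes no difference.

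The gap is the sentence claiming that, once $\tilde h^{-1}(t)$ is an essential annular continuum in $\mathbb A$, its projection automatically has an open annulus as complement in $\T^2$. Your ``it is enough'' reduction skips exactly the part of the lemma that needs work. Using $\tilde h\circ T=\tilde h+1$, the covering map restricts to a homeomorphism from $W=\tilde h^{-1}((t,t+1))$ onto $\T^2\setminus h^{-1}(t)$. So you must show that $W$, the region between the disjoint continua $\tilde h^{-1}(t)$ and $\tilde h^{-1}(t+1)$, is homeomorphic to $\R\times\T^1$. You do not show that $W$ is connected, since your increasing-union argument treats only rays. Even for a domain whose complement in the two-point compactification consists of two continua, being an open annulus is a genuine plane-topology theorem, not a formality. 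The paper proves it as follows. The sets $\{\tilde h\le t\}$ and $\{\tilde h\ge t+1\}$, each together with its end, are nonseparating continua. Riemann maps onto their simply connected complements give decreasing Jordan-disc neighbourhoods. Schoenflies is then applied to the resulting exhaustion of $W$ by closed annuli.

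Your own sandwich device repairs this fairly cheaply. Take $\epsilon'_n<1/2$ with your rapid decrease, and put $C_n=\tilde h_n^{-1}([t+\epsilon'_n,t+1-\epsilon'_n])$. Each $C_n$ is the image under a lift of $H_n$ of a set $[c,d]\times\T^1$. The same inequalities you used for your sets $A_n$ give $C_n\subset W$, $C_n\subset\operatorname{int}C_{n+1}$ and $\bigcup_nC_n=W$. This yields connectedness of $W$. The two-dimensional annulus theorem, a consequence of Schoenflies, shows that each $C_{n+1}\setminus\operatorname{int}C_n$ is a pair of closed annuli. Homeomorphisms can therefore be extended inductively to give $W\cong\R\times\T^1$. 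Alternatively, you may cite the classification of doubly connected planar domains. Either way this step has to appear; as written, the lemma's final claim is asserted rather than proved, and your stated ``main obstacle'' is misplaced.
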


\begin{proof}
We prove the statement for $h_0(x,y)=\psi(x)$. The other case follows by exchanging the coordinates.
On the cyclic cover $\R\times\T^1$, choose real lifts $p_n$ of $h_0H_n^{-1}$ and $p$ of
$h$ with $p_n\to p$ uniformly. Such a normalisation is possible because the circle-valued maps converge uniformly
and have the same homotopy class. The map $p$ is proper, since $p(x,y)-x$ is bounded and periodic.
Each $p_n$ is obtained by precomposing $\widetilde\psi\circ\pr_1$ with an annular homeomorphism, up to an integer
translation. Thus inverse images of compact intervals under $p_n$ are connected.

Given $a\leq b$, choose $\varepsilon_n\downarrow0$ with $\varepsilon_n>\|p_n-p\|_{C^0}$. The connected compact sets
\begin{equation}\label{eq:interval-preimage-limit}
 p_n^{-1}([a-\varepsilon_n,b+\varepsilon_n])
 \longrightarrow p^{-1}([a,b])
 \quad\text{in the Hausdorff metric}.
\end{equation}
Indeed, these sets contain $p^{-1}([a,b])$ and lie in a common compact strip. Every limit of points in them has its
$p$-value in $[a,b]$. Hence $p^{-1}([a,b])$ is connected. Exhaustion by compact intervals now shows that the inverse
image of every open interval or ray is connected. In particular, $p^{-1}(t)$ is a compact continuum. Its complement in the
cylinder has exactly two components, $\{p<t\}$ and $\{p>t\}$. These contain the two ends.

The identity $p(x+1,y)=p(x,y)+1$ shows that projection embeds $p^{-1}(t)$ as the full fibre
$h^{-1}(t\bmod1)$. Its complement in the torus identifies with $p^{-1}((t,t+1))$. Cap the two ends of the cylinder
to obtain a sphere. The complement of this last region consists of the disjoint continua
$\{p\leq t\}$ together with the lower end, and $\{p\geq t+1\}$ together with the upper end. Each is
nonseparating: its complement is the connected inverse image of a ray together with the opposite end.

Each of these continua has decreasing closed Jordan-disc neighbourhoods. Its complementary
domain in the sphere is simply connected. A Riemann map onto this domain sends concentric circles to Jordan
curves. The closed discs on the side containing the continuum decrease to it as the radii tend to one. Choose the
two families disjoint and strictly nested. The complements of their interiors exhaust $p^{-1}((t,t+1))$ by closed annuli.
Successive applications of the Schoenflies theorem between their boundary curves identify the exhaustion with that
of an open annulus. 
\end{proof}

\begin{lemma}\label{lem:jordan-fibre-survival}
Let $h_0:\T^2\to\T^1$ have fibres which are essential Jordan curves for every parameter outside a countable set $Z$. Suppose that
$H_0=\operatorname{id}$, $H_n=H_{n-1}Q_n\in\operatorname{Homeo}_0(\T^2)$, and $h_0H_n^{-1}\to h$ uniformly.
Assume that $h$ is an irrational circle factor, homotopic to a coordinate projection, whose fibres are thin
essential annular continua. If
\begin{equation}\label{eq:summable-source-supports}
 \sum_{n\geq1}\operatorname{Leb}_{\T^1}\bigl(h_0(\operatorname{supp}Q_n)\bigr)<\infty,
\end{equation}
then almost every fibre of $h$ is a Jordan curve.
\end{lemma}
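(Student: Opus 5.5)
Use Borel--Cantelli on the base circle and track where a source fibre is sent. Put $E_n=h_0(\operatorname{supp}Q_n)\subset\T^1$. By \eqref{eq:summable-source-supports} and Lemma~\ref{lem:measure-facts}\eqref{item:summable-sets}, almost every $t$ lies in only finitely many $E_n$. Discard also the countable set $Z$ and the null set of $t$ lying in infinitely many of the sets $E_n$. For a remaining $t$, fix $N$ with $t\notin E_n$ for $n>N$. Then $C_t=h_0^{-1}(t)$ is an essential Jordan curve disjoint from $\operatorname{supp}Q_n$ for all $n>N$. The recursion $H_n=H_{n-1}Q_n$ gives $H_n|_{C_t}=H_N|_{C_t}$ for every $n\geq N$. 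Hence $\Gamma_t=H_N(C_t)$ is an essential Jordan curve with $\Gamma_t\subset h_0^{-1}\bigl(t\bigr)\circ H_n^{-1}$ for all $n\geq N$. Equivalently, $h_0H_n^{-1}=t$ on $\Gamma_t$ for all such $n$.

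Passing to the uniform limit, $h=t$ on $\Gamma_t$. So the limiting fibre $h^{-1}(t)$ contains the essential Jordan curve $\Gamma_t$. It remains to show that this containment is an equality for almost every $t$. Since $h^{-1}(t)$ is a thin essential annular continuum, it has a unique core, and an essential Jordan curve is a circloid. If $\Gamma_t\subset h^{-1}(t)$, then $\Gamma_t$ is an essential annular subcontinuum, so it contains the core. Minimality of the Jordan curve among essential annular continua (a proper subcontinuum of a circle is an arc, which is inessential) then forces $\Gamma_t$ to equal the core. By Lemma~\ref{lem:thin-annular-sides}, the core is $A^-\cap A^+$ computed in the cyclic cover.

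Now invoke Proposition~\ref{prop:countable-spikes}: only countably many fibres differ from their cores. Since $h$ is an irrational circle factor homotopic to a coordinate projection with thin essential annular fibres, that proposition applies. Outside a countable set, $h^{-1}(t)$ equals its core, which is $\Gamma_t$. Countable sets are Lebesgue null, so almost every fibre of $h$ is a Jordan curve.

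The main obstacle is this last identification. The proof is written before Proposition~\ref{prop:countable-spikes}, so if that proposition may not be used here, I would prove the countability directly, as follows.

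\begin{itemize}
\item Lift to $\R\times\T^1$ with proper $p$. The sets $U^-(A_t)$ increase with $t$, and so do the $U^+$ complements.
\item If $A_t\neq A_t^-\cap A_t^+$, then $A_t^+\setminus A_t^-$ or $A_t^-\setminus A_t^+$ is nonempty. Say a point $x\in A_t^+\setminus A_t^-$. Then a small disc around $x$ avoids $A_t^-$ and $\overline{U^-(A_t)}$.
\item Thinness means this disc meets $U^+(A_t)$ and lies in $\{p\geq t\}$. Moreover, $x$ is approached only from above.
\item By the sided-limit formula \eqref{eq:prelim-sided-limits}, $A_t^-\neq A_t^+$ produces a jump of the upper-semicontinuous set-valued map $s\mapsto A_s$ at $t$. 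Such jumps, measured by Hausdorff distance along a countable dense set of test points, can occur at only countably many $t$, by monotonicity.
\end{itemize}

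This gives countably many $t$ with $A_t^-\neq A_t^+$, hence with fibre different from its core.
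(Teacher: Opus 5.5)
Your proposal is correct and follows the paper's proof step for step. Both arguments apply Borel--Cantelli to the sets $h_0(\operatorname{supp}Q_n)$ and freeze the source circle under all later $Q_n$. Uniform convergence then places $H_N(h_0^{-1}(t))$ in $h^{-1}(t)$ as its circloid core, and Proposition~\ref{prop:countable-spikes} identifies fibre and core off a countable set.

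The paper also uses that proposition as a forward reference, and its countability proof is purely topological, via local extremum values of $\widehat h$. Your fallback sketch is therefore unnecessary. If you keep it, replace the vague appeal to ``monotonicity'': for instance, use that each $s\mapsto d(z,A_s)$ has left and right limits $d(z,A_s^-)$ and $d(z,A_s^+)$ at every $s$, and hence has only countably many discontinuities.
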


\begin{proof}
Put $D_n=h_0(\operatorname{supp}Q_n)$. Lemma~\ref{lem:measure-facts}\eqref{item:summable-sets} gives
\begin{equation}\label{eq:null-support-limsup}
 D_\infty=\limsup_{n\to\infty}D_n,
 \qquad \operatorname{Leb}_{\T^1}(D_\infty)=0.
\end{equation}
Fix $\theta\notin Z\cup D_\infty$. For some $n_0$, every $Q_n$ with $n>n_0$ fixes the circle
$h_0^{-1}(\theta)$ pointwise. Since $H_n=H_{n-1}Q_n$, it follows that
\begin{equation}\label{eq:surviving-source-circle}
 H_n(h_0^{-1}(\theta))=H_{n_0}(h_0^{-1}(\theta))
 \qquad(n\geq n_0).
\end{equation}
The factors $h_0H_n^{-1}$ equal $\theta$ on this fixed essential Jordan curve. Uniform convergence places the curve
in $h^{-1}(\theta)$. The curve is a circloid. It therefore equals the unique circloid core of the thin fibre $h^{-1}(\theta)$. By
Proposition~\ref{prop:countable-spikes}, only countably many fibres differ from their cores. Apart from their parameters
and $Z\cup D_\infty$, every final fibre is exactly the surviving circle. The exceptional set is null.
\end{proof}

\begin{proof}[Proof of Theorem~\ref{thm:thin-counterexample}]
\textbf{The space of conjugates.}
Use the maps and sets fixed above for the prescribed vector $\rho=(\alpha,\beta)$.
The invariant measure $\mu_0$ has support $K\times\T^1$ and satisfies
$(h_0)_*\mu_0=\operatorname{Leb}_{\T^1}$. For every torus homeomorphism $H$ isotopic to the identity relative to $X$, put
\begin{equation}
 f_H=HgH^{-1},\qquad h_H=h_0H^{-1},\qquad \mu_H=H_*\mu_0.
\end{equation}
Let $\mathcal X$ be the closure of these triples in
\begin{equation}
 \operatorname{Homeo}_0(\T^2)\times
 C_{[\pr_1]}(\T^2,\T^1)\times\mathcal P(\T^2),
\end{equation}
where $\mathcal P(\T^2)$ is the compact metrisable space of Borel probability measures with the weak topology. The
first factor has the metric $d_{\mathrm H}$ from \eqref{eq:homeomorphism-metric}. Here
$C_{[\pr_1]}(\T^2,\T^1)$ denotes the closed homotopy class of $\pr_1$ in the uniform metric. Thus $\mathcal X$ is a
nonempty complete metric space. Every
$(f,h,\mu)\in\mathcal X$ satisfies
\begin{equation}\label{eq:closure-identities}
 hf=R_\alpha h,\qquad f|_X=g|_X,\qquad h|_X=h_0|_X.
\end{equation}
The map $h$ is onto because its homotopy class is nontrivial. Lemma~\ref{lem:measure-facts}\eqref{item:measure-limits} also gives
$f_*\mu=\mu$ and $h_*\mu=\operatorname{Leb}_{\T^1}$.

The fixed dynamics on $X$ determines the rotation vector of every map in this closure. Choose the lift $G$ from
Proposition~\ref{prop:denjoy-model} and a point $z_*\in\pi^{-1}(X)$. A relative isotopy defining $H$ lifts to an
isotopy fixing $\pi^{-1}(X)$ pointwise. Hence
\begin{equation}\label{eq:protected-conjugate-lift}
 \widetilde H G\widetilde H^{-1}=G
 \quad\text{on }\pi^{-1}(X).
\end{equation}
For every limiting triple, normalise the lift $F$ by $F(z_*)=G(z_*)$.
Normalised lifts depend continuously on $f$, so $F=G$ on $\pi^{-1}(X)$. This set is invariant under both lifts.
Hence $F^n(z_*)=G^n(z_*)$ for every $n$. The orbit of $z_*$ therefore has rotation vector $\rho$.
Lemma~\ref{lem:rotation-singleton}, applied to the factor identity in \eqref{eq:closure-identities}, makes $\rot(F)$
a singleton. The orbit of $z_*$ identifies it as $\{\rho\}$.

\textbf{The full-support condition.}
Let $(B_n)_{n\geq1}$ be a countable basis of nonempty connected open balls in the torus. Let $\mathcal W_n$ be the set of triples
$(f,h,\mu)\in\mathcal X$ for which $\mu(B_n)>0$. The mass of an open set is lower
semicontinuous in the weak topology. Thus this condition is open. We next prove density.

It suffices to perturb a conjugate triple $(f_H,h_H,\mu_H)$. Its measure has support $H(K\times\T^1)$.
If $B_n$ meets this set, it already has positive measure. Otherwise the connected set $H^{-1}(B_n)$ lies in the
interior of one wandering annulus $g^m(A)$, where $A$ is either $I^0\times\T^1$ or $I^1\times\T^1$. Apply
Lemma~\ref{lem:relative-tower} with target
\begin{equation}
 g^{-m}H^{-1}(B_n)\subset\operatorname{int}A,
\end{equation}
and put $Q=g^mPg^{-m}$ and $H'=HQ$. Since $g^m(X)=X$, both $Q$ and $H'$ are isotopic to the identity
relative to $X$.
Conjugating the forward and inverse estimates by the fixed map $Hg^m$ makes $d_{\mathrm H}(f_{H'},f_H)$ as small
as desired. The factor estimate is unchanged by rotation of the base, since $h_0g^m=R_{m\alpha}h_0$.

Invariance of $\mu_0$ gives $\mu_0(\operatorname{supp}Q)=\mu_0(\operatorname{supp}P)<\delta$. Consequently, for
any continuous test function $\varphi$,
\begin{equation}\label{eq:measure-perturbation}
 \left|\int\varphi\,d\mu_{H'}-\int\varphi\,d\mu_H\right|
 \leq 2\|\varphi\|_\infty\mu_0(\operatorname{supp}Q)
 \leq2\|\varphi\|_\infty\delta.
\end{equation}
Thus the whole triple can be kept arbitrarily close to the original one. The set of parameters $h_0(\operatorname{supp}Q)$ also satisfies
\begin{equation}\label{eq:translated-base-support}
 \operatorname{Leb}_{\T^1}\bigl(h_0(\operatorname{supp}Q)\bigr)
 =\operatorname{Leb}_{\T^1}\bigl(h_0(\operatorname{supp}P)\bigr)<\delta.
\end{equation}

The point $z\in\partial A$ supplied by the lemma belongs to $\operatorname{supp}\mu_0$. Hence
$g^m z\in\operatorname{supp}\mu_0$ and $H'(g^m z)=Hg^mPz\in B_n$. This puts a support point of $\mu_{H'}$ in
$B_n$, so $\mu_{H'}(B_n)>0$. Positivity is an open condition. Thus $\mathcal W_n$ is dense, and the perturbation
can be chosen with $\operatorname{Leb}_{\T^1}(h_0(\operatorname{supp}Q))$ arbitrarily small.

\textbf{The limiting map.}
To retain the summable support bounds, we choose nested balls explicitly. Starting with $H_0=\operatorname{id}$,
choose $H_n=H_{n-1}Q_n$ and a closed ball $\mathcal B_n$ centred at its conjugate triple such that
\begin{equation}\label{eq:nested-baire-balls}
 \mathcal B_n\subset\operatorname{int}\mathcal B_{n-1}
 \cap\bigcap_{j=1}^n\mathcal W_j,
 \quad \operatorname{diam}\mathcal B_n<2^{-n},
 \quad
 \operatorname{Leb}_{\T^1}\bigl(h_0(\operatorname{supp}Q_n)\bigr)<2^{-n}.
\end{equation}
For $n=1$, omit $\mathcal B_0$. If the preceding triple already belongs to $\mathcal W_n$, take
$Q_n=\operatorname{id}$. Otherwise use the density argument with the last bound. Keep the perturbed centre inside
the preceding ball and the earlier open conditions, then choose $\mathcal B_n$ small enough to preserve them.
Completeness gives a unique limit
\begin{equation}\label{eq:chosen-baire-limit}
 (f,h,\mu)\in\bigcap_{n\geq1}\mathcal B_n
 \subset\bigcap_{n\geq1}\mathcal W_n.
\end{equation}
The invariant measure $\mu$ has full support. Lemma~\ref{lem:measure-facts}\eqref{item:measure-recurrence}
therefore makes $f$ nonwandering. Since its rotation vector is $\rho$, Lemma~\ref{lem:potrie-transitivity} gives topological
transitivity. The equality $f|_X=g|_X$ preserves the pairwise disjoint
uniquely ergodic minimal Cantor sets $K\times\{s\}$, $s\in S$.

\textbf{The fibres and area preservation.}
Since $h_*\mu=\operatorname{Leb}_{\T^1}$, every fibre has $\mu$-measure zero. Full support of $\mu$ therefore
forces every fibre to have empty interior. Lemma~\ref{lem:annular-fibre-limit} gives its annular topology.
The fibres of $h_0$ are essential Jordan curves away from the two countable blown-up orbits. Equation
\eqref{eq:nested-baire-balls} and Lemma~\ref{lem:jordan-fibre-survival} now show that almost every fibre of $h$ is
a Jordan curve.

Finally, $\mu$ is nonatomic, since $\mu(\{z\})\leq\mu(h^{-1}(h(z)))=\operatorname{Leb}_{\T^1}(\{h(z)\})=0$. Lemma~\ref{lem:oxtoby-ulam} therefore gives a homeomorphism
$\Phi\in\operatorname{Homeo}_0(\T^2)$ with $\Phi_*\mu=\operatorname{Leb}_{\T^2}$. Replace $(f,h,\mu)$ by
$(\Phi f\Phi^{-1},h\Phi^{-1},\Phi_*\mu)$ and retain the notation $f,h,\mu$.
The map $f$ is now area-preserving and remains transitive. Its pairwise disjoint uniquely ergodic minimal Cantor
sets include $\Phi(K\times\{s\})$, $s\in S$. A lift of $\Phi$ differs from the identity by a bounded function. Thus this conjugacy preserves the rotation vector. It also preserves the homotopy class of the factor
and the fibre type at each base parameter. This proves the theorem.
\end{proof}

\begin{remark}\label{rem:conjugacy-limits}
Before the final change of coordinates, the limiting triple can be chosen arbitrarily close to $(g,h_0,\mu_0)$.
This includes uniform closeness of the maps and their inverses. The conjugacies and their inverses need not converge.
\end{remark}

\section{The Dehn-twist construction}\label{sec:dehn}

Throughout this section, fix $k\in\Z\setminus\{0\}$ and $\alpha\in\R\setminus\Q$ as in
Theorem~\ref{thm:dehn-counterexample}. We place the Denjoy dynamics in the second coordinate. We use a circle map of degree $k$ for the horizontal displacement. We write $g$ for the initial model, with lift $G$, and $f$ for the map obtained after taking
the limit and changing coordinates.

\begin{lemma}\label{lem:factor-bounded-deviation}
Let $f\in\operatorname{Homeo}_k(\T^2)$, and suppose that a continuous map $h:\T^2\to\T^1$, homotopic to $\pr_2$,
satisfies $h\circ f=R_\alpha\circ h$.  There is a lift $F$ of $f$ such that
\begin{equation}\label{eq:factor-lift-relation}
 \sup_{z\in\R^2,\,n\in\Z}
 \left|\pr_2(F^n(z)-z)-n\alpha\right|<\infty.
\end{equation}
In particular, the vertical rotation set of $F$ is $\{\alpha\}$.
\end{lemma}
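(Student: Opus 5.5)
The plan is to lift $h$ to a real-valued map and compare it with the vertical coordinate. Since $h\simeq\pr_2$, there is a continuous $\widetilde h:\R^2\to\R$ lifting $h\circ\pi$ with $\widetilde h(z+v)=\widetilde h(z)+\pr_2 v$ for $v\in\Z^2$. In particular $\widetilde h(z)-\pr_2(z)$ is $\Z^2$-periodic and hence bounded, say by $C$.

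Now fix any lift $F_1$ of $f$. The function $\widetilde h\circ F_1-\widetilde h-\alpha$ is continuous and takes integer values, because $h f=R_\alpha h$; so it is a constant integer $m$. We must check that it is defined on all of $\R^2$ and is $\Z^2$-invariant. Using $F_1(z+v)=F_1(z)+A_kv$ and $\pr_2(A_kv)=\pr_2 v$, we get $\widetilde h(F_1(z+v))-\widetilde h(z+v)=\widetilde h(F_1 z)-\widetilde h(z)$, so it descends to the torus. Connectedness of $\R^2$ then gives a constant $m\in\Z$.

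Next replace $F_1$ by $F=F_1-(0,m)$. This is again a lift of $f$, and since $\widetilde h(w-(0,m))=\widetilde h(w)-m$ we obtain
\begin{equation*}
\widetilde h\circ F=\widetilde h+\alpha
\qquad\text{on }\R^2.
\end{equation*}
Iterating gives $\widetilde h(F^n z)=\widetilde h(z)+n\alpha$ for all $n\in\Z$; for negative $n$, apply the identity to $F^{-1}z$. Hence
\begin{equation*}
\bigl|\pr_2(F^nz-z)-n\alpha\bigr|\le\bigl|\widetilde h(F^nz)-\widetilde h(z)-n\alpha\bigr|+2C=2C,
\end{equation*}
which is \eqref{eq:factor-lift-relation}. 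Dividing by $n$ and comparing with \eqref{eq:vertical-rotation-set} gives $\rho_V(F)=\{\alpha\}$.

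The only delicate point is the equivariance of the lift $\widetilde h$ under deck transformations in the Dehn-twist class. It works because $A_k$ fixes the second coordinate of integer vectors; this is what makes the vertical displacement well defined, and the horizontal twisting never enters.
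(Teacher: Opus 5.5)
Your proof is correct and is essentially the paper's argument: you normalise the lift of $f$ by an integer vertical translation so that $\widetilde h\circ F=\widetilde h+\alpha$, then bound the deviation by $2\|\widetilde h-\pr_2\|_\infty$ through the coboundary identity. One small point: the $\Z^2$-invariance check is superfluous, because an integer-valued continuous function on the connected space $\R^2$ is already constant, so the matrix $A_k$ plays no role in this lemma.
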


\begin{proof}
Choose a real lift $H:\R^2\to\R$ of $h$. Choose a lift $F$ of $f$ and adjust it by a deck transformation so that $H\circ
F=H+\alpha$.  Since $h$ is homotopic to $\pr_2$, there is a bounded $\Z^2$-periodic function $\varphi$ such that
$H(x,y)=y+\varphi(x,y)$.  Iteration gives
\begin{equation}\label{eq:factor-deviation-coboundary}
 \pr_2(F^n(z)-z)-n\alpha
 =\varphi(z)-\varphi(F^n(z))
 \qquad(n\in\Z).
\end{equation}
The absolute value is at most $2\|\varphi\|_\infty$.
\end{proof}

For rational vertical rotation, Addas-Zanata, Tal and Garcia proved that a singleton rotation set implies bounded
deviations \cite[Theorem~2]{AddasZanataTalGarcia2014}. For an irrational vertical rotation number in a nontrivial Dehn-twist
class, bounded deviations imply the existence of a circle factor \cite[Theorem~A]{Correa2026}.

\begin{proposition}\label{prop:dehn-denjoy-model}
There exist an orientation-preserving Denjoy homeomorphism $f_0:\T^1\to\T^1$ with unique minimal Cantor set $K$,
a monotone degree-one map $\psi:\T^1\to\T^1$, and a degree-$k$ map $\tau:\T^1\to\T^1$, together with a
closed wandering interval $I=[a,b]$ for $f_0$, such that the maps
\begin{equation}\label{eq:dehn-source-map}
 g(x,y)=(x+\tau(y),f_0(y)),\qquad h_0(x,y)=\psi(y),
\end{equation}
satisfy $h_0\circ g=R_\alpha\circ h_0$. The map $f_0$ is uniquely ergodic, with invariant probability measure
$\lambda$, and $\tau|_{\T^1\setminus\operatorname{int}I}=0$. The minimal sets of $g$ are exactly
\begin{equation}\label{eq:dehn-source-minimal-sets}
 \{s\}\times K,\qquad s\in\T^1.
\end{equation}
Each of these minimal sets is uniquely ergodic. Moreover, $g\in\operatorname{Homeo}_k(\T^2)$ and a lift of $g$ has
$\alpha$-bounded vertical deviations.
\end{proposition}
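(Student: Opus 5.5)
The construction follows Proposition~\ref{prop:denjoy-model}, with the coordinates exchanged and the horizontal displacement given by a degree-$k$ circle map. First I build the Denjoy homeomorphism. Blow up a single $R_\alpha$-orbit, say the orbit of $0$. This gives an orientation-preserving Denjoy homeomorphism $f_0$ and a monotone degree-one map $\psi$ with $\psi f_0=R_\alpha\psi$. The map $f_0$ has a unique minimal Cantor set $K$ and closed wandering interval $I=[a,b]=\psi^{-1}(0)$. As in Proposition~\ref{prop:denjoy-model}, $\psi$ pushes every invariant measure to Lebesgue measure and is injective off countably many null fibres, so $f_0$ is uniquely ergodic, with invariant measure $\lambda$ supported on $K$.

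Next I define $\tau$. Let $\tau:\T^1\to\T^1$ vanish outside $\operatorname{int}I$, and on $I$ let it wrap $k$ times around the circle. Concretely, its real lift on $I$ is $k(y-a)/(b-a)$, which takes integer values at both endpoints. Then $\tau$ is continuous of degree $k$ and $\tau|_{\T^1\setminus\operatorname{int}I}=0$.

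Put $g(x,y)=(x+\tau(y),f_0(y))$. It is a homeomorphism, with inverse $(x,y)\mapsto(x-\tau(f_0^{-1}y),f_0^{-1}y)$. It is isotopic to $(x,y)\mapsto(x+ky,y)$: deform $f_0$ to the identity through circle homeomorphisms, and deform $\tau$ to $y\mapsto ky$ through degree-$k$ maps. Each intermediate map $(x+\tau_t(y),f_t(y))$ is a homeomorphism, since it is a skew product over a circle homeomorphism by fibre rotations. Hence $g\in\operatorname{Homeo}_k(\T^2)$. The factor identity is immediate: $h_0g(x,y)=\psi(f_0y)=\psi(y)+\alpha$. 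Lemma~\ref{lem:factor-bounded-deviation} then supplies a lift with $\alpha$-bounded vertical deviations. One can also see this directly from the lift $(x+\widetilde\tau(y),F_0(y))$, since $F_0$ has bounded deviations from $y+n\alpha$ by the semiconjugacy.

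The minimal sets are handled as in Proposition~\ref{prop:denjoy-model}. Since $\tau=0$ on $K$, $g(s,y)=(s,f_0(y))$ for $y\in K$. Thus $\{s\}\times K$ is invariant, and $g$ restricted to it is conjugate to $f_0|_K$, so it is minimal and uniquely ergodic. Conversely, every minimal set $M$ projects under $\pr_2$ onto a closed $f_0$-invariant set, hence contains $K$ in its projection. Its projection is exactly $K$, because points over wandering intervals are not recurrent in the second coordinate. On $\T^1\times K$ the first coordinate is invariant, so it is constant on $M$ by minimality. Therefore $M=\{s\}\times K$.

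The only real care needed is the choice of $\tau$: it must be continuous, of degree exactly $k$, and vanish off $\operatorname{int}I$, and the isotopy class of $g$ must be checked. Both are settled by the explicit wrapping on the single wandering interval described above. Unlike in Avila's model, no second blown-up orbit is needed here, because no prescribed average has to be balanced.
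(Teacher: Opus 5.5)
Your proposal is correct and follows the paper's proof essentially step for step. The paper also blows up a single orbit, lets $\tau$ wrap $k$ times linearly across $I$, checks the factor identity directly, invokes Lemma~\ref{lem:factor-bounded-deviation}, and classifies the minimal sets using the projection to $K$ together with invariance of the first coordinate on $\T^1\times K$. The only variation is that you obtain $g\in\operatorname{Homeo}_k(\T^2)$ from an explicit isotopy through skew products, whereas the paper reads it off from the lift equivariance $G(z+(m,n))=G(z)+A_k(m,n)$; both arguments are valid.
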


\begin{proof}
Perform a Denjoy blow-up of one orbit of $R_\alpha$.  This gives $f_0$, $\psi$ and a closed wandering interval $I=[a,b]$
such that
\begin{equation}\label{eq:dehn-denjoy-factor}
 \psi\circ f_0=R_\alpha\circ\psi.
\end{equation}
The intervals $f_0^j(I)$ are the closures of the complementary intervals of $K$.  As in
Proposition~\ref{prop:denjoy-model}, the map $f_0$ is uniquely ergodic.  Write its invariant probability measure as
$\lambda$.

Choose a circle coordinate in which a lift of $I$ is contained in $(0,1)$. On $[0,1]$, set $\widetilde\tau=0$ to the left of $I$
and $\widetilde\tau=k$ to the right of $I$. Interpolate linearly from $0$ to $k$ on $I$. Extend it by
\begin{equation}\label{eq:degree-k-cocycle-lift}
 \widetilde\tau(t+n)=\widetilde\tau(t)+kn
 \qquad(t\in\R,\ n\in\Z).
\end{equation}
Its projection is a continuous degree-$k$ circle map $\tau$.  It vanishes on $K$ and on every complementary interval
other than $\operatorname{int}I$.

Let $F_0$ be the lift of $f_0$ with rotation number $\alpha$. Then
\begin{equation}\label{eq:dehn-source-lift}
 G(x,y)=(x+\widetilde\tau(y),F_0(y))
\end{equation}
satisfies
\begin{equation}\label{eq:dehn-equivariance}
 G(z+(m,n))=G(z)+A_k(m,n).
\end{equation}
Thus $g\in\operatorname{Homeo}_k(\T^2)$.  Equation \eqref{eq:dehn-denjoy-factor} gives the factor identity, and
Lemma~\ref{lem:factor-bounded-deviation} gives the deviation bound.

On $\T^1\times K$ we have $g(x,y)=(x,f_0(y))$. Hence every set in \eqref{eq:dehn-source-minimal-sets} is a uniquely
ergodic minimal Cantor set. Conversely, the second-coordinate projection of a minimal set is $K$. The first
coordinate is invariant on $\T^1\times K$. This gives all the minimal sets of $g$.
\end{proof}

\begin{lemma}\label{lem:dehn-relative-tower}
Fix the maps of Proposition~\ref{prop:dehn-denjoy-model} and a Cantor set $S\subset\T^1$. Put
\begin{equation}\label{eq:dehn-local-data}
 X=S\times K,\qquad \mu_0=\operatorname{Leb}_{\T^1}\times\lambda,
 \qquad A=\T^1\times I.
\end{equation}
For every $\varepsilon,\eta,\delta>0$ and every nonempty open set $B\subset\operatorname{int}A$, there are a
homeomorphism $P$, isotopic to the identity relative to $X$, and a point $z\in\partial A\cap(\T^1\times K)$ such that
\begin{align}
 d_0(PgP^{-1},g)&<\varepsilon,
 &d_0(Pg^{-1}P^{-1},g^{-1})&<\varepsilon,
 \label{eq:dehn-local-dynamics}\\
 d_0(h_0P^{-1},h_0)&<\eta,
 &P(z)&\in B,
 \label{eq:dehn-local-factor}\\
 \mu_0(\operatorname{supp}P)&<\delta,
 &\operatorname{Leb}_{\T^1}\bigl(h_0(\operatorname{supp}P)\bigr)&<\delta.
 \label{eq:dehn-local-support}
\end{align}
\end{lemma}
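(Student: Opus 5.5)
The plan is to mirror the proof of Lemma~\ref{lem:relative-tower} with the roles of the coordinates exchanged. The one new feature is the horizontal shear $x\mapsto x+\tau(y)$, which replaces the vertical shear by $u$.

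\textbf{Local isotopy.} First choose $(x_0,y_0)\in B$ with $x_0\notin S$, and an interval $V\ni x_0$ whose closure misses $S$. Fix a closed interval $J_*\supset I$ in the circle coordinate. Take an increasing piecewise-linear homeomorphism of $J_*$ that equals the identity near the endpoints and sends $a$ to $y_0$. Interpolate it linearly with the identity, and multiply the interpolation parameter by a cutoff supported in $V$ and equal to one at $x_0$. This gives a vertical isotopy $(k_t)$ supported in $V\times J_*$. It fixes $S\times J_*$ pointwise, hence fixes $X$, and satisfies $k_1(x_0,a)=(x_0,y_0)$. The point $z=(x_0,a)$ lies in $\partial A\cap(\T^1\times K)$, because the endpoints of a wandering interval belong to $K$.

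\textbf{Compression.} As before, compress by maps $\chi_J(x,y)=(x,\chi_J(y))$ that are the identity on $I$ and linear on the collars, and put $k_{J,t}=\chi_Jk_t\chi_J^{-1}$. We have $\tau\circ\chi_J=\tau$ on $J_*$, since $\tau$ is constant ($0$ mod $1$) on both collars. For $J$ narrow, and since $\tau$ vanishes outside $\operatorname{int}I$, the iterates on $\T^1\times J$ become simple:
\[
 g^j(x,y)=(x+\tau(y),f_0^j(y)) \ \ (1\le j\le N+2),\qquad g^j(x,y)=(x,f_0^j(y)) \ \ (-N-2\le j\le 0).
\]
This holds provided the intervals $f_0^j(J)$ with $j\ne0$ avoid $I$, which is possible since $f_0^j(I)$ is disjoint from $I$. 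These formulas are exactly parallel to \eqref{eq:tower-single-shear}.

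\textbf{Time modulus and the tower.} The analogue of \eqref{eq:tower-time-modulus} follows as before. For $|j|>L$, the vertical coordinate lies in the short interval $f_0^j(J)$. For $|j|\le L$, use uniform continuity of $f_0^j$ and the Lipschitz bound on $\chi_J$. The horizontal coordinate is $x+\tau(\pr_2 k_t(w))$, whose uniform continuity in $t$ is independent of $j$, $J$ and $N$. Then pick $N$ and a slowly varying sequence $t_j$ with $t_0=1$. Only after that, choose $J$ so that
\[
(2N+1)\lambda(J)<\delta,\qquad (2N+1)\operatorname{Leb}(\psi(J))<\delta,\qquad \operatorname{diam}\psi(J)<\eta.
\]
Define $P=g^jk_{J,t_j}g^{-j}$ on $\T^1\times f_0^j(J)$ for $|j|\le N$, and the identity elsewhere. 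The estimates on the dynamics, on the factor (using $\psi\circ f_0^j=R_{j\alpha}\psi$) and on the supports are then verbatim from the previous lemma. Invariance of $\mu_0$ is used for the support bounds. The isotopy $t_j\mapsto st_j$ is relative to $X$ because $g(X)=X$.

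\textbf{Expected obstacle.} The main point to check is the vertical (here horizontal) coordinate estimate. In the identity class, $u$ took integer values on the collars, so the accumulated shear vanished. Here the analogous fact is that $\tau\in\Z$ only in the lifted sense, with $\tau\equiv0$ mod $1$ off $\operatorname{int}I$. One must check that the conjugated displacement involves only the single term $\tau(\pr_2k_t(w))$ and no unbounded accumulation from the degree-$k$ lift. Projecting to the torus removes the integer parts, so this reduces to uniform continuity of $\tau$.
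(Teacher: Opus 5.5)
Your proposal is correct and follows essentially the same route as the paper's proof. Both use a vertical finger isotopy supported away from $S$, compressed into a collar of $I$ where $\tau\circ\chi_J=\tau$, and spread along the $2N+1$ tower levels $\T^1\times f_0^j(J)$ with slowly varying parameters $t_j$, choosing $J$ last to obtain the measure and factor bounds; you also make explicit, where the paper leaves it implicit, that $z=(x_0,a)$ lies in $\T^1\times K$ because the endpoints of the wandering interval belong to $K$.
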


\begin{proof}
Choose a slightly larger interval $J_*\supset I$. Since $S$ is nowhere dense, we may choose $(x_0,y_0)\in B$ with
$x_0\notin S$. Choose a vertical isotopy $(\kappa_t)_{0\leq t\leq1}$ supported in $\T^1\times J_*$. It can be chosen to fix $S\times J_*$
pointwise and satisfy
\begin{equation}\label{eq:dehn-boundary-finger}
 \kappa_1(x_0,a)=(x_0,y_0),
\end{equation}
where $a$ is one endpoint of $I$. To construct it, take a small horizontal interval about $x_0$ disjoint from $S$. Move $a$ to
$y_0$ there. Multiply the isotopy parameter by a horizontal cutoff.

For every sufficiently narrow interval $J$ with $I\subset J\subset J_*$, choose an increasing circle homeomorphism $\chi_J$ which is the
identity on $I$, maps $J_*$ onto $J$, and is linear on the two collars. We use the same symbol for
$(x,y)\mapsto(x,\chi_J(y))$ and put
\begin{equation}\label{eq:dehn-compressed-isotopy}
 \kappa_{J,t}=\chi_J\kappa_t\chi_J^{-1}.
\end{equation}
The maps $\chi_J$ are uniformly Lipschitz on $J_*$, with constant at most one. We also have $\tau\circ\chi_J=\tau$ on
$J_*$ as circle-valued maps, because $\chi_J$ fixes $I$ and $\tau$ is zero on both collars.

Choose $L$ so large that $|f_0^j(I)|<\varepsilon/4$ whenever $|j|>L$. For any prescribed $N>L+2$, we can choose $J$
narrow enough that the intervals $f_0^j(J)$, $|j|\leq N+2$, have pairwise disjoint closures. Require also that $f_0^j(J)$ avoids $I$
for $j\neq0$, and that $|f_0^j(J)|<\varepsilon/2$ for $L<|j|\leq N+2$. On $\T^1\times J$ we then have
\begin{equation}\label{eq:dehn-tower-single-shear}
 g^j(x,y)=
 \begin{cases}
  (x+\tau(y),f_0^j(y)),&j\geq1,\\
  (x,f_0^j(y)),&j\leq0,
 \end{cases}
\end{equation}
for $-N-2\leq j\leq N+2$.

There is $\gamma>0$, independent of $N$ and the collar compression, such that
$|s-t|<\gamma$ implies
\begin{equation}\label{eq:dehn-tower-time-modulus}
 \sup_{w\in\T^1\times J}
 d\bigl(g^j\kappa_{J,s}(w),g^j\kappa_{J,t}(w)\bigr)<\varepsilon
\end{equation}
for $-N-2\leq j\leq N+2$. To see this, write $u=(x,y)\in\T^1\times J_*$. Since $\kappa_t$ changes only the
second coordinate, \eqref{eq:dehn-tower-single-shear} and $\tau\circ\chi_J=\tau$ show that the first coordinate of
$g^j\kappa_{J,t}(\chi_J(u))$ is $x$ for $j\leq0$, and $x+\tau(\pr_2(\kappa_t(u)))$ for $j\geq1$.
Uniform continuity gives a bound on $|s-t|$ independent of $j$, $J$ and $N$. For $|j|>L$, the two second coordinates
lie in the same interval $f_0^j(J)$ of length less than $\varepsilon/2$. For the remaining finitely many indices,
use uniform continuity and the common Lipschitz bound for $\chi_J$.

Choose $N>L+2$ sufficiently large and numbers $t_j\in[0,1]$ such that
\begin{equation}
 t_0=1,\qquad t_j=0\quad(|j|\geq N+1),\qquad |t_{j+1}-t_j|<\gamma.
\end{equation}
Only now choose $J$ with the preceding disjointness, avoidance and size properties and, in addition,
\begin{equation}\label{eq:dehn-collar-smallness}
 \operatorname{diam}\psi(J)<\eta,
 \qquad (2N+1)\lambda(J)<\delta,
 \qquad (2N+1)\operatorname{Leb}_{\T^1}(\psi(J))<\delta.
\end{equation}
This is possible because $\lambda(I)=0$ and $\psi(I)$ is a point. Define
\begin{equation}\label{eq:dehn-tower-perturbation}
 P=g^j\kappa_{J,t_j}g^{-j}
 \quad\hbox{on }\T^1\times f_0^j(J),\quad -N\leq j\leq N,
\end{equation}
and let $P$ be the identity elsewhere. The annuli are disjoint, so $P$ is a homeomorphism. Scaling the parameters
$t_j$ gives an isotopy relative to $X$. The comparison of adjacent parameters in \eqref{eq:dehn-tower-time-modulus}
proves \eqref{eq:dehn-local-dynamics}. As in Lemma~\ref{lem:relative-tower}, take $t_j=0$ on the adjacent annuli
with $|j|=N+1$ when checking the forward and inverse estimates. The map $P$ preserves each annulus setwise. The
oscillation of $h_0$ on each annulus is $\operatorname{diam}\psi(J)$. This proves the first part of \eqref{eq:dehn-local-factor}.
Equations \eqref{eq:dehn-boundary-finger} and \eqref{eq:dehn-compressed-isotopy} give $P(x_0,a)=(x_0,y_0)\in B$.
Finally, the support lies in the $2N+1$ annuli $\T^1\times f_0^j(J)$, $-N\leq j\leq N$. Each has
$\mu_0$-measure $\lambda(J)$, and its image under
$h_0$ is a rotate of $\psi(J)$. The last two inequalities in \eqref{eq:dehn-collar-smallness} therefore give
\eqref{eq:dehn-local-support}.
\end{proof}

\begin{lemma}\label{lem:dehn-transitivity}
Let $k\in\Z\setminus\{0\}$ and $\alpha\in\R\setminus\Q$. Suppose that
$f\in\operatorname{Homeo}_k(\T^2)$ admits a continuous circle factor $h\simeq\pr_2$ satisfying
$h\circ f=R_\alpha\circ h$ and preserves a probability measure of full support. Then $f$ is topologically transitive.
\end{lemma}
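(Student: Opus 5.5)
The plan is to deduce transitivity from the recurrence lemma of Kocsard, Lemma~\ref{lem:fully-essential-recurrence}, applied to full-support points.

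\textbf{Hypotheses of Lemma~\ref{lem:fully-essential-recurrence}.} By Lemma~\ref{lem:measure-facts}\eqref{item:measure-recurrence}, the full-support invariant measure makes $f$ nonwandering, so $\Omega(f)=\T^2$.

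The map $f$ has no periodic points. Indeed, if $f^q(z)=z$ with $q\geq1$, then $h(z)=h(z)+q\alpha$, which is impossible since $\alpha$ is irrational.

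We also need $f$ not to be eventually annular. Suppose $f^q$ has a lift $G$ and a nonzero $v=(v_1,v_2)\in\Z^2$ with $\langle G^n(z)-z,v\rangle$ bounded. Since $f^q\in\operatorname{Homeo}_{qk}(\T^2)$ and $qk\neq0$, we have $G(z+w)=G(z)+A_{qk}w$ for $w\in\Z^2$. Hence $G^n(z+w)-(z+w)=G^n(z)-z+(A_{qk}^n-I)w$, and $(A_{qk}^n-I)w=(nqk\,w_2,0)$.
\begin{itemize}
\item If $v_1\neq0$, take $w=(0,1)$. Then $\langle G^n(z+w)-(z+w),v\rangle$ differs from $\langle G^n(z)-z,v\rangle$ by $nqk\,v_1$, which is unbounded in $n$. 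This contradicts the uniform bound.
\item If $v_1=0$, the bounded quantity is $v_2\,\pr_2(G^n(z)-z)$. By Lemma~\ref{lem:factor-bounded-deviation} and the equivariance of vertical displacement, any lift of $f^q$ has $\pr_2(G^n(z)-z)=n(q\alpha+m)+O(1)$ for some integer $m$. Since $q\alpha+m\neq0$, this is unbounded, again a contradiction.
\end{itemize}
So $f$ is not eventually annular.

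\textbf{Main step: from fully essential sets to transitivity.} Let $U,V$ be nonempty open sets. Choose a small disc $B\subset U$ about a point. By Lemma~\ref{lem:fully-essential-recurrence}, the component $W$ of $\bigcup_{n\in\Z}f^n(B)$ containing $B$ is fully essential. Do the same for a small disc $B'\subset V$ to get a fully essential component $W'$.

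Two fully essential open sets must intersect. Their complements lie in topological discs, and the union of two inessential closed sets cannot be the whole torus. Hence $f^n(B)\cap f^m(B')\neq\varnothing$ for some integers $n,m$, that is, $f^{n-m}(U)\cap V\neq\varnothing$.

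The remaining issue is the sign of $n-m$. The definition of transitivity asks for a positive iterate, so we must pass from an integer $j$ with $f^j(U)\cap V\neq\varnothing$ to a positive one. Nonwandering gives this. If $j\leq0$, put $O=U\cap f^{-j}(V)$, which is open and nonempty. Take the positive return times of $O$ supplied by measure recurrence, Lemma~\ref{lem:measure-facts}\eqref{item:measure-recurrence}. These can be made arbitrarily large, so choose $N>-j$ with $f^N(O)\cap O\neq\varnothing$. Then $f^{N+j}(U)\cap V\neq\varnothing$ and $N+j\geq1$.

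\textbf{Expected obstacle.} The step I expect to be most delicate is verifying the ``not eventually annular'' hypothesis. In particular, one must check that any lift of any iterate, not only the normalised one, has unbounded vertical drift. This uses $q\alpha\notin\Z$, which holds because $\alpha$ is irrational.
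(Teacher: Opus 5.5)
Your proof is correct and follows the paper's argument essentially step for step. You exclude periodic points via the factor and get nonwandering from measure recurrence; you rule out eventual annularity by forcing $v_1=0$ through the Dehn-twist equivariance and then using the drift $n(q\alpha+m)+O(1)$; you apply Kocsard's lemma to obtain fully essential orbit components that must meet; and you use measure recurrence again to make the iterate positive. The only difference is cosmetic: you justify that the two fully essential domains intersect because their inessential complements cannot cover the torus, whereas the paper pairs a horizontal loop in one domain with a vertical loop in the other and uses their algebraic intersection number.
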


\begin{proof}
The factor excludes periodic points. Lemma~\ref{lem:measure-facts}\eqref{item:measure-recurrence} makes every point
nonwandering. We claim that $f$ is not eventually annular. Suppose instead that an iterate $f^q$ were annular.
There would be a lift $G$ of $f^q$ and a nonzero vector $v=(v_1,v_2)\in\Z^2$ for which
$\langle G^n(z)-z,v\rangle$ is uniformly bounded.
Equivariance gives
\begin{equation}\label{eq:dehn-deck-shear}
 G^n(z+(0,1))-G^n(z)=(nkq,1).
\end{equation}
Applying the annular bound at $z$ and $z+(0,1)$ forces $v_1=0$.  Thus the vertical displacement of a lift of $f^q$ is
bounded. On the other hand, every lift of $f^q$ differs from the $q$th iterate of a factor-compatible lift by an
integer deck translation. Since $A_{kq}$ fixes the second coordinate, Lemma~\ref{lem:factor-bounded-deviation} shows
that its vertical rotation number is $q\alpha-p$ for some $p\in\Z$. This cannot vanish because $\alpha$ is irrational.

Lemma~\ref{lem:fully-essential-recurrence} now applies. Thus the full orbit unions of
any two nonempty open sets $U,V$ contain fully essential domains. Such domains intersect, by the algebraic
intersection form on the torus. Hence $f^r(U)\cap V\neq\varnothing$ for some $r\in\Z$.

If $r\leq0$, put $m=-r$. Then $U\cap f^m(V)$ contains a nonempty open set $W$. Lemma~\ref{lem:measure-facts}\eqref{item:measure-recurrence}, applied to $W$, gives arbitrarily large $q$ with
$f^q(W)\cap W\neq\varnothing$.  Taking $q>m$ yields
$f^{q-m}(U)\cap V\neq\varnothing$.  Thus the iterate can always be chosen positive.
\end{proof}

\begin{proof}[Proof of Theorem~\ref{thm:dehn-counterexample}]
With $k$ and $\alpha$ fixed as above, let $g,h_0,f_0,K,\lambda$ and $I$ be as in
Proposition~\ref{prop:dehn-denjoy-model}. Thus $g(x,y)=(x+\tau(y),f_0(y))$ and $h_0(x,y)=\psi(y)$.
Choose a Cantor set $S\subset\T^1$, and put
\begin{equation}\label{eq:dehn-protected-set-measure}
 X=S\times K,
 \qquad
 \mu_0=\operatorname{Leb}_{\T^1}\times\lambda.
\end{equation}
The set $X$ is $g$-invariant. The measure $\mu_0$ is an invariant nonatomic probability measure supported on $\T^1\times K$.
It satisfies $(h_0)_*\mu_0=\operatorname{Leb}_{\T^1}$.

For each torus homeomorphism $H$ isotopic to the identity relative to $X$, consider the triple
\begin{equation}\label{eq:dehn-conjugate-triple}
 (HgH^{-1},h_0H^{-1},H_*\mu_0).
\end{equation}
Take the closure of these triples in
\begin{equation}\label{eq:dehn-baire-space}
 \operatorname{Homeo}_k(\T^2)\times
 C_{[\pr_2]}(\T^2,\T^1)\times\mathcal P(\T^2),
\end{equation}
where $C_{[\pr_2]}(\T^2,\T^1)$ is the closed homotopy class of $\pr_2$. Using the metrics of Section~\ref{sec:preliminaries},
this is a complete space. Uniform convergence and Lemma~\ref{lem:measure-facts}\eqref{item:measure-limits} show that
every limiting triple $(f,h,\mu)$ satisfies
\begin{equation}\label{eq:dehn-closure-identities}
 h\circ f=R_\alpha\circ h,
 \qquad f|_X=g|_X,
 \qquad h|_X=h_0|_X,
 \qquad f_*\mu=\mu,
 \qquad h_*\mu=\operatorname{Leb}_{\T^1}.
\end{equation}
The map $h$ is onto because its homotopy class is nontrivial.

The full-support argument in the proof of Theorem~\ref{thm:thin-counterexample} applies with
Lemma~\ref{lem:dehn-relative-tower} in place of Lemma~\ref{lem:relative-tower}. Let $(B_n)$ be a countable basis of
nonempty connected open balls. If $B_n$ misses $H(\T^1\times K)$, then $H^{-1}(B_n)$ lies in one wandering
annulus $\T^1\times\operatorname{int}f_0^m(I)$. Apply Lemma~\ref{lem:dehn-relative-tower} with target
$g^{-m}H^{-1}(B_n)$, and put $Q=g^mPg^{-m}$ and $H'=HQ$. The map $H'$ sends a point of
$\operatorname{supp}\mu_0$ into $B_n$, so $(H')_*\mu_0(B_n)>0$.
The dynamical and factor estimates, together with \eqref{eq:measure-perturbation}, keep the conjugate triple
arbitrarily close to the original one. Invariance of $\mu_0$ and the identity $h_0g^m=R_{m\alpha}h_0$ give
the base-support bound \eqref{eq:translated-base-support}. Thus each condition $\mu(B_n)>0$ is open and dense,
and the perturbation can have arbitrarily small $\operatorname{Leb}_{\T^1}(h_0(\operatorname{supp}Q))$.

Choose nested balls as in \eqref{eq:nested-baire-balls}. At stage $n$, require the first $n$ full-support conditions.
Write the successive conjugacies as $H_n=H_{n-1}Q_n$. In addition, require
\begin{equation}\label{eq:dehn-summable-base-support}
 \operatorname{Leb}_{\T^1}
 \bigl(h_0(\operatorname{supp}Q_n)\bigr)<2^{-n}.
\end{equation}
The resulting limit $(f,h,\mu)$ satisfies \eqref{eq:dehn-closure-identities}, and $\mu$ has full support.
Since $h_*\mu=\operatorname{Leb}_{\T^1}$, every fibre has $\mu$-measure zero. Full support therefore forces
every fibre to have empty interior.

Since $h_0=\psi\circ\pr_2$, Lemma~\ref{lem:annular-fibre-limit} gives the essential annular topology of every
limiting fibre. The fibres of $h_0$ are Jordan curves away from the countable blown-up orbit. Equation
\eqref{eq:dehn-summable-base-support} and Lemma~\ref{lem:jordan-fibre-survival} therefore show that almost every
limiting fibre is a Jordan curve.

For every $s\in S$, the equality $f|_X=g|_X$ preserves the uniquely ergodic minimal Cantor set $\{s\}\times K$. 
These sets are pairwise disjoint. The zero measure of every fibre also shows that $\mu$ is nonatomic.
By Lemma~\ref{lem:oxtoby-ulam}, there is $\Phi\in\operatorname{Homeo}_0(\T^2)$ such that
$\Phi_*\mu=\operatorname{Leb}_{\T^2}$. Replace $(f,h,\mu)$ by
$(\Phi f\Phi^{-1},h\Phi^{-1},\Phi_*\mu)$ and retain the notation $f,h,\mu$.

The map $f$ now preserves area and remains in $\operatorname{Homeo}_k(\T^2)$. Its factor $h$ is homotopic to
$\pr_2$ and satisfies $h\circ f=R_\alpha\circ h$. Each fibre is the image under $\Phi$ of the corresponding
old fibre. Thus every fibre remains a thin annular continuum, and almost every fibre is still a Jordan curve. The sets
$\Phi(\{s\}\times K)$, $s\in S$, remain pairwise disjoint uniquely ergodic minimal Cantor sets.
Lemma~\ref{lem:factor-bounded-deviation}, applied after this change of coordinates, gives a lift $F$ with
$\alpha$-bounded vertical deviations. Lemma~\ref{lem:dehn-transitivity} gives topological transitivity.
\end{proof}

\section{Thin annular fibres}\label{sec:thin-geometry}

We describe the two sides and the unique circloid core of a thin annular fibre. Only countably many fibres are
larger than their cores. Outside this countable set, local connectedness, being a Jordan curve, and having a
Jordan-curve core are equivalent. This reduces Corollary~\ref{cor:locally-connected} to
Theorem~\ref{thm:generic-jordan-fibres}.

For a totally irrational pseudo-rotation, Lemma~\ref{prop:canonical-fibres} shows that every fibre is an essential annular continuum.

\subsection{The two sides of a thin fibre}

Throughout this section, assume that $h$ is homotopic to a coordinate projection and
$h\circ f=R_\alpha\circ h$, where $\alpha$ is irrational. Assume also that every fibre is a thin essential annular continuum.
The results below apply in both homotopy classes considered in the introduction.

We first choose the annular cover. Let $\Lambda_h=\ker(h_*:\pi_1(\T^2)\to\pi_1(\T^1))$.
Set $\mathbb A=\R^2/\Lambda_h$ and let $q:\mathbb A\to\T^2$ be the covering map. Thus $\mathbb A\simeq\R\times\T^1$.
The circle direction represents $\Lambda_h$. The real direction corresponds to the homotopy class of $h$.
These coordinates do not parametrise the individual fibres. The map $h$ has a lift $\widehat h:\mathbb A\to\R$ satisfying
\begin{equation}\label{eq:annular-lift-period}
 \widehat h(Tz)=\widehat h(z)+1
\end{equation}
for the remaining deck transformation $T$. The identity $h_*f_*=h_*$ shows that $f$ lifts to this cover.
We choose its lift $\widehat f$ so that $\widehat h\circ\widehat f=\widehat h+\alpha$.
The map $\widehat h$ differs from the real coordinate on $\mathbb A$ by a
bounded function. Hence it is proper. Put
\begin{equation}\label{eq:lifted-fibres}
 A_t=\widehat h^{-1}(t),\qquad t\in\R.
\end{equation}
The restriction $q|_{A_t}$ is a homeomorphism onto $h^{-1}(t\bmod1)$. Each $A_t$ is a compact essential annular
continuum in $\mathbb A$. Call the end where $\widehat h$ tends to $-\infty$ the lower end.
At the upper end, $\widehat h$ tends to $+\infty$. Since $A_t$ is an essential annular continuum, $\mathbb
A\setminus A_t$ has exactly two components. The continuous function $\widehat h-t$ does not vanish on either component.
It therefore has constant sign on each component. It is negative on the lower component and positive on the upper component. Hence
\begin{equation}\label{eq:fibre-complement-sides}
 U^-(A_t)=\{\widehat h<t\},
 \qquad U^+(A_t)=\{\widehat h>t\}.
\end{equation}
Define the boundaries of these two components and their common core by
\begin{equation}\label{eq:fibre-sides-core}
 A_t^-=\partial U^-(A_t),\qquad
 A_t^+=\partial U^+(A_t),\qquad
 C_t=A_t^-\cap A_t^+.
\end{equation}

Lemma~\ref{lem:thin-annular-sides}, applied to the proper lift $\widehat h$, gives
\begin{equation}\label{eq:thin-fibre-decomposition}
 A_t=A_t^-\cup A_t^+,
\end{equation}
and shows that $C_t=A_t^-\cap A_t^+$ is the unique circloid contained in $A_t$. The same lemma gives the one-sided
Hausdorff limits
\begin{align}
 \lim_{s\nearrow t}^{\mathcal H}A_s&=A_t^- ,
 \label{eq:left-Hausdorff}\\
 \lim_{s\searrow t}^{\mathcal H}A_s&=A_t^+ .
 \label{eq:right-Hausdorff}
\end{align}
The sets $A_t^-$ and $A_t^+$ are continua, since they are Hausdorff limits of the continua $A_s$.
The projections $q(A_t^\pm)$ and $q(C_t)$ depend only on $\theta=t\bmod 1$.
We denote these projections by $A_\theta^\pm$ and $C_\theta$.

The following proposition shows that all but countably many fibres coincide with their circloid cores.

\begin{proposition}\label{prop:countable-spikes}
Let
\begin{equation}\label{eq:exceptional-fibres}
 E=\{\theta\in\T^1:h^{-1}(\theta)\neq C_\theta\}.
\end{equation}
Then $E$ is countable and invariant under $R_\alpha$. Consequently, $h^{-1}(\T^1\setminus E)$ is an invariant
dense $G_\delta$ set of full measure for every $f$-invariant probability measure, and every fibre over
$\T^1\setminus E$ equals its core.
\end{proposition}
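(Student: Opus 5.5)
The plan is to work in the annular cover $\mathbb A$ with the proper lift $\widehat h$ and to show that the lifted fibres $A_t$, $t\in\R$, which differ from their cores are countable. Fix $t$ with $A_t\neq C_t$. By \eqref{eq:thin-fibre-decomposition}, $A_t=A_t^-\cup A_t^+$, so some point lies in $A_t^-\setminus A_t^+$ or in $A_t^+\setminus A_t^-$. Suppose $x\in A_t^+\setminus A_t^-$; the other case is symmetric. Since $x\notin A_t^-=\partial\{\widehat h<t\}$ and $x\in A_t$, there is a small open disc $W$ about $x$ with $W\cap\{\widehat h<t\}=\varnothing$. Hence $\widehat h\geq t$ on $W$. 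Because $x\in\partial\{\widehat h>t\}$, the set $W\cap\{\widehat h>t\}$ is nonempty. I would then record a rational open disc $W'\subset W$, taken from a fixed countable basis of $\mathbb A$, with $x\in W'$, and use it to encode $t$.

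The key claim is that the value $t=\min_{\overline{W'}}\widehat h$ is determined by $W'$, and the point is that $W'$ meets $A_t$. Indeed, $\widehat h\geq t$ on $W\supset \overline{W'}$ if $W'$ is chosen with closure in $W$, and $\widehat h(x)=t$ with $x\in W'$. So $t=\min_{\overline{W'}}\widehat h$. The assignment $t\mapsto W'$ is therefore injective on the set of exceptional $t$ of this type, and similarly with $\max$ for the other type. This gives an injection into two copies of a countable basis, so the set of $t\in\R$ with $A_t\neq C_t$ is countable. Projecting by $t\mapsto t\bmod 1$, and using that $q|_{A_t}$ is a homeomorphism onto $h^{-1}(\theta)$ carrying $C_t$ to $C_\theta$, shows that $E$ is countable. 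I expect the only delicate point to be making sure that the notion "equals its core" is the same upstairs and downstairs, but this is immediate from the definitions via $q$.

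For invariance I would use $\widehat f$. It is a homeomorphism with $\widehat h\circ\widehat f=\widehat h+\alpha$, so $\widehat f(A_t)=A_{t+\alpha}$ and $\widehat f(\{\widehat h<t\})=\{\widehat h<t+\alpha\}$. Hence it maps $A_t^\pm$ onto $A_{t+\alpha}^\pm$ and $C_t$ onto $C_{t+\alpha}$. It follows that $A_t=C_t$ if and only if $A_{t+\alpha}=C_{t+\alpha}$, so $R_\alpha(E)=E$.

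For the consequences, $h^{-1}(\T^1\setminus E)$ is $f$-invariant by the factor relation. It is the complement of the countable union of the closed sets $h^{-1}(\theta)$, $\theta\in E$. Each of these fibres has empty interior because it is thin, so each is nowhere dense and the complement is a dense $G_\delta$ by Baire. For any $f$-invariant probability $\nu$, the pushforward $h_*\nu$ is $R_\alpha$-invariant, hence equal to Lebesgue measure by unique ergodicity of the irrational rotation. Therefore $\nu(h^{-1}(E))=\operatorname{Leb}(E)=0$. The final assertion, that every fibre over $\T^1\setminus E$ equals its core, is the definition of $E$.
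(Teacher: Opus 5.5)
Your proposal is correct and follows the paper's proof essentially step for step. A point of $A_t^\pm\setminus C_t$ makes $t$ a local extremum value of $\widehat h$, and such values are countable because each is the extremum of $\widehat h$ over a basic open set. The invariance of $E$ and the category and measure consequences come, exactly as in the paper, from $\widehat f$, thinness of the fibres, and unique ergodicity of $R_\alpha$.
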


\begin{proof}
Suppose that $A_t\neq C_t$. By \eqref{eq:thin-fibre-decomposition}, there is a point in
$A_t^-\setminus C_t$ or $A_t^+\setminus C_t$. If $z\in A_t^-\setminus C_t$, then
$z\notin\overline{\{\widehat h>t\}}$. Hence $\widehat h\leq t$ near $z$, so $t$ is a local maximum value.
If $z\in A_t^+\setminus C_t$, the same argument gives $\widehat h\geq t$ near $z$, so $t$ is a local minimum value.

The local maximum values of a continuous real-valued function on a second-countable space form a countable set.
Indeed, fix a countable basis. Each local maximum value is the supremum of the function on some basic open set.
The same argument, with infima, applies to local minimum values. Thus only countably many real parameters
satisfy $A_t\neq C_t$. Passing modulo $\Z$ proves that $E$ is countable.

The lift $\widehat f$ maps $A_t^\pm$ to $A_{t+\alpha}^\pm$ and $C_t$ to $C_{t+\alpha}$.
Thus $E$ is invariant under $R_\alpha$. Each fibre is closed and has empty interior.
Since $E$ is countable, $h^{-1}(E)$ is meagre and $F_\sigma$. Its complement is therefore dense and $G_\delta$.
The complement is invariant, and every fibre over $\T^1\setminus E$ equals its core.
If $\mu$ is $f$-invariant, then $h_*\mu$ is invariant under the irrational rotation.
Hence $h_*\mu$ is Lebesgue measure. Since $E$ is countable, $\mu(h^{-1}(E))=0$.
\end{proof}

\subsection{Nonmeagre regular fibres}

\begin{lemma}\label{lem:lc-parameters}
Let $J$ be the set of parameters whose fibres are Jordan curves. Let $J_c$ be the set of parameters whose fibre cores are Jordan curves.
These two sets are Borel and invariant under $R_\alpha$. Moreover,
\begin{equation}\label{eq:regular-parameters-modulo-countable}
 J\subset J_c,
 \qquad J_c\setminus J\subset E,
\end{equation}
where $E$ is the countable set in Proposition~\ref{prop:countable-spikes}. Consequently, if $J_c$ is nonmeagre
or the fibres are locally connected on a nonmeagre set of parameters, then $J$ is residual.
\end{lemma}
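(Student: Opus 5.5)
We verify the three assertions in order: invariance and Borel measurability, the two inclusions, and then the category conclusion via Lemma~\ref{lem:category-zero-one}.

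\textbf{Invariance.} The lift $\widehat f$ is a homeomorphism of $\mathbb A$ mapping $A_t$ onto $A_{t+\alpha}$, $A_t^\pm$ onto $A_{t+\alpha}^\pm$, and $C_t$ onto $C_{t+\alpha}$. Being a Jordan curve is a topological property, so $J$ and $J_c$ are $R_\alpha$-invariant. The same holds for local connectedness of the fibre.

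\textbf{Inclusions.} If $h^{-1}(\theta)$ is a Jordan curve, it is an essential annular continuum, since every fibre is, hence an essential Jordan curve. It therefore contains no proper essential annular subcontinuum, because a proper subcontinuum of a circle is an arc or a point and does not separate. So it is its own circloid, equals $C_\theta$, and $\theta\in J_c$. If $\theta\in J_c\setminus J$, then the fibre differs from its core, so $\theta\in E$.

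\textbf{Borel measurability.} This is the step I expect to be the main technical point. I would use the Hausdorff-metric description. By Lemma~\ref{lem:thin-annular-sides} and \eqref{eq:left-Hausdorff}--\eqref{eq:right-Hausdorff}, the maps $t\mapsto A_t$ and $t\mapsto A_t^\pm$ are Borel: $t\mapsto A_t$ is upper semicontinuous, and $A_t^\pm$ are one-sided limits of it. The core $C_t=A_t^-\cap A_t^+$ is then Borel as well, since intersection is Borel on pairs of compact sets. It remains to note that the set of compact subsets of $\T^2$ that are Jordan curves is Borel in $\mathcal K(\T^2)$; this is a standard descriptive-set-theoretic fact. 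Alternatively, one can avoid it: since $J_c\setminus J\subset E$ with $E$ countable, it suffices to show that one of $J$, $J_c$ is Borel, which changes nothing modulo countable sets. For local connectedness I would use that the set of locally connected continua is Borel (indeed $F_{\sigma\delta}$) in the hyperspace, which is classical.

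\textbf{Category conclusion.} If $J_c$ is nonmeagre, then $J\supset J_c\setminus E$ is nonmeagre, because $E$ is countable and hence meagre, as points are nowhere dense in the circle. $J$ is invariant and Borel, so it has the Baire property, and Lemma~\ref{lem:category-zero-one} makes it residual.

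For the locally connected case, let $\theta\notin E$ be a parameter whose fibre is locally connected. By Lemma~\ref{lem:locally-connected-separator}, the fibre contains a Jordan curve, since it separates the annulus; capping both ends gives a sphere in which it separates. Such a curve is essential: an inessential curve in a thin continuum would bound a disc whose interior lies in the complement, but both complementary components contain ends, which is a contradiction. So the curve is an essential annular subcontinuum and contains the circloid. A Jordan curve has no proper essential subcontinuum, so it equals $C_\theta$, and $C_\theta$ equals the fibre since $\theta\notin E$. Hence $\theta\in J$. Thus $J$ contains the nonmeagre set of locally connected parameters minus the countable set $E$, and Lemma~\ref{lem:category-zero-one} again gives residuality.
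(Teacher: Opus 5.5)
Your argument is correct, and its skeleton matches the paper's. Both use invariance via $\widehat f$, the inclusions $J\subset J_c$ and $J_c\setminus J\subset E$, and the fact that a locally connected fibre off $E$ is a Jordan curve (separator lemma, essentiality, uniqueness of the circloid). Both finish with the zero-one law of Lemma~\ref{lem:category-zero-one}.

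The real difference is the Borel step. You pass through Borel measurability of $t\mapsto A_t^\pm$ and $t\mapsto C_t$. You then invoke the fact that Jordan curves form a Borel subset of $\mathcal K(\T^2)$. That fact is true, but it is an external descriptive-set-theoretic result, and you give it without proof or a precise reference. The paper needs only three things:
\begin{itemize}
\item the elementary fact that locally connected continua form the set $\bigcap_m\bigcup_n\mathcal L_{m,n}$, with each $\mathcal L_{m,n}$ shown closed directly;
\item Borel measurability of $\theta\mapsto h^{-1}(\theta)$;
\item the sandwich $J\subset L\subset J_c$ with countable differences, which transfers Borelness from $L$ to both $J$ and $J_c$.
\end{itemize}

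Your remark that one ``can avoid'' the Jordan-curve fact by showing that only one of $J,J_c$ is Borel does not avoid anything. Proving either set Borel still needs that fact. What does avoid it is exactly the sandwich, and you already have its ingredients: your last paragraph proves $L\setminus E\subset J\subset L$. Your route gives slightly more, namely Borel dependence of the core on the parameter, but at the cost of a citation the paper does not need.

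One small repair is needed in the essentiality step. The disc bounded by an inessential curve need not have its interior in the complement of the fibre. The correct argument has two parts. By thinness, the open disc meets the complement. The complementary component through such a point cannot cross the curve, so it lies in the disc and contains no end, which is the contradiction.
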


\begin{proof}
We first show that the core of a thin locally connected annular continuum $A$ is a Jordan curve. Add one point at
each end of the annulus to obtain a sphere. The set $A$ separates these points, so
Lemma~\ref{lem:locally-connected-separator} gives a Jordan curve $C\subset A$. This curve separates the two ends.
Indeed, if one of the discs bounded by $C$ contained neither end and met the complement of $A$, that complement
would have a component containing neither end. If the disc did not meet the complement, it would lie in $A$,
contrary to thinness. Thus $C$ is essential. It is a circloid and hence is the unique core of $A$.

Put
\begin{equation}\label{eq:lc-parameters}
 L=\{\theta\in\T^1:h^{-1}(\theta)\text{ is locally connected}\}.
\end{equation}
Work in the space of nonempty subcontinua of $\T^2$, with the Hausdorff metric. For positive
integers $m,n$, let $\mathcal L_{m,n}$ consist of the continua $C$ such that, whenever $x,y\in C$ and $d(x,y)<1/n$,
some subcontinuum $D\subset C$ contains $x,y$ and satisfies $\operatorname{diam}D\leq1/m$.

The set $\mathcal L_{m,n}$ is closed. Suppose that $C_j\in\mathcal L_{m,n}$ tends to $C$.
Take $x,y\in C$ with $d(x,y)<1/n$. Choose $x_j,y_j\in C_j$ tending to $x,y$.
For all large $j$, there is a subcontinuum $D_j\subset C_j$ containing $x_j,y_j$ with diameter at most $1/m$.
The space of subcontinua is compact, so a subsequence of $D_j$ converges in the Hausdorff metric.
Its limit is a subcontinuum of $C$. It contains $x,y$ and has diameter at most $1/m$.

By the uniform characterisation of local connectedness for compact continua, the locally connected continua form
the Borel set
\begin{equation}\label{eq:lc-hyperspace-class}
 \bigcap_{m\geq1}\bigcup_{n\geq1}\mathcal L_{m,n}.
\end{equation}

The fibre map $\theta\mapsto h^{-1}(\theta)$ is Borel. To see this, let $U\subset\T^2$ be open.
The set of parameters whose fibres meet $U$ is $h(U)$. Since $U$ is a countable union of compact sets, $h(U)$ is $F_\sigma$.
The parameters whose fibres lie in $U$ form the open set
\begin{equation}
 \T^1\setminus h(\T^2\setminus U).
\end{equation}
The sets of continua meeting an open set or lying in an open set generate the topology of the Hausdorff metric.
This proves that the fibre map is Borel. Equation~\eqref{eq:lc-hyperspace-class} then shows that $L$ is Borel.

A Jordan curve is locally connected, so $J\subset L$. The first paragraph gives $L\subset J_c$. If
$\theta\in J_c\setminus E$, then $h^{-1}(\theta)=C_\theta$ is a Jordan curve. This proves
\eqref{eq:regular-parameters-modulo-countable}. Both $J$ and $J_c$ differ from $L$ by countable sets.
They are therefore Borel. Finally,
\begin{equation}
 f(h^{-1}(\theta))=h^{-1}(\theta+\alpha),
 \qquad f(C_\theta)=C_{\theta+\alpha},
\end{equation}
so all three parameter sets are invariant under $R_\alpha$.
If either $L$ or $J_c$ is nonmeagre, then $J$ is nonmeagre because these sets differ only by countable sets.
Lemma~\ref{lem:category-zero-one} now shows that $J$ is residual.
\end{proof}

\begin{proof}[Proof of Corollary~\ref{cor:locally-connected}]
Lemma~\ref{lem:lc-parameters} makes the set of Jordan fibres residual. Theorem~\ref{thm:generic-jordan-fibres}
then gives a unique minimal set.
\end{proof}

In Theorems~\ref{thm:thin-counterexample} and~\ref{thm:dehn-counterexample}, the parameters whose fibres are
Jordan curves have full Lebesgue measure. They form a meagre set.
Otherwise Corollary~\ref{cor:locally-connected} would give a unique minimal set. Their complement is therefore residual and has measure zero.

\section{Uniqueness of the minimal set}\label{sec:uniqueness}

Throughout this section, $f$ and $h$ satisfy one of the two dynamical hypotheses of
Theorem~\ref{thm:generic-jordan-fibres}, and every fibre is a thin essential annular continuum.

We adapt the oriented-gap argument of \cite[Proposition~4.2]{BeguinCrovisierJagerLeRoux2009}. The proof has two steps.
First, an invariant open set meeting one fibre in finitely many components must be fully essential.
Second, two distinct minimal sets would give two disjoint invariant open sets with this property.
Two fully essential domains in the torus cannot be disjoint.

\subsection{Invariant open sets}

Recall that a domain $U\subset\T^2$ is fully essential if its complement is inessential.
For a domain, this means that $\T^2\setminus U$ is contained in a topological disc. Every closed curve in the torus is then homotopic to a
curve in $U$. Given two fully essential domains $U,V$, choose a loop in $U$ homotopic to a horizontal circle
and a loop in $V$ homotopic to a vertical circle. Their algebraic intersection number has absolute value $1$,
so $U\cap V\ne\varnothing$.

\begin{lemma}\label{lem:invariant-open-finite-fibre}
Let $f$ and $h$ satisfy one of the two dynamical hypotheses of Theorem~\ref{thm:generic-jordan-fibres}, and suppose that
every fibre has empty interior. Let $V$ be a nonempty invariant open set. If $V\cap h^{-1}(\theta)$ has finitely many
connected components for some $\theta$, then $V$ is connected and fully essential.
\end{lemma}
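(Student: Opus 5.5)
Since $V$ is invariant, $f^n(V\cap h^{-1}(\theta))=V\cap h^{-1}(\theta+n\alpha)$, so every fibre over the dense orbit $\theta+\Z\alpha$ meets $V$ in the same number $m$ of components. I will show that $V$ is connected and essential in the direction of the fibres, and then that it is fully essential.

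\textbf{Connectedness.} Let $V_1$ be a connected component of $V$. Because $f$ permutes the components of $V$, the component $f^n(V_1)$ contains $f^n$ of a component of $V\cap h^{-1}(\theta)$ whenever $V_1$ meets that fibre. The set $h(V_1)$ is open. It is nonempty and hence contains an interval, so some rotate $\theta+n\alpha$ lies in it. Consequently each component of $V$ meets one of the finitely many components of $V\cap h^{-1}(\theta+n\alpha)$ for suitable $n$. Pulling back to the fixed fibre $h^{-1}(\theta)$ by $f^{-n}$, I obtain a map from components of $V$ to the $m$ components of $V\cap h^{-1}(\theta)$, up to applying iterates. So $V$ has finitely many $f$-orbits of components, and each orbit is finite, since only $m$ pieces sit in a given fibre. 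A component $W$ with $f^p(W)=W$ is invariant under $f^p$. Now $h(W)$ is open and $R_{p\alpha}$-invariant, so it is all of $\T^1$. I claim $W$ must then contain an essential loop in the fibre direction, arguing as follows. Lift to $\mathbb A$. Since $\widehat h(W)$ is unbounded in both directions after lifting, $W$ contains a path crossing from $\{\widehat h<t\}$ to $\{\widehat h>t+1\}$. That gives a loop in $W$ homotopic to the class of $h$ (not in $\Lambda_h$). Two disjoint components each containing such a loop would be fine topologically, so this does not yet give connectedness. The real argument must use that $V\cap h^{-1}(\theta)$ has finitely many components together with thinness. A component $W$ crossing every fibre must meet $h^{-1}(\theta)$. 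If $p>1$, the components $W,f(W),\dots$ are distinct, each crosses every fibre, and they are cyclically ordered transversally, since they are disjoint open sets crossing the annulus. Irrational dynamics on the base, combined with a rotation-type argument in the $\Lambda_h$ direction, is what should force $p=1$ and a single orbit.

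\textbf{Full essentiality.} I need $V$ to also contain a loop in the $\Lambda_h$ class, that is, a loop along the fibres. Suppose not. Then in the cover $\mathbb A$, every component of the lift of $V$ is bounded in the circle direction. Hence $V\cap h^{-1}(\theta)$, which has finitely many components, lies in finitely many inessential pieces. Its complement in the fibre direction then yields an invariant essential "strip" complement, namely the closure of $\T^2\setminus V$ restricted near fibres. This complement contains an essential annular continuum transverse to no loop of $V$. In case (1) such an invariant structure produces a rotation relation incompatible with total irrationality, via Lemma~\ref{lem:potrie-transitivity} or a rotation-set argument in the spirit of \cite[Lemma~3.9]{JagerKeller2006}. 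In case (2) the Dehn twist shears fibre-direction loops, $A_k$ acting by $(0,1)\mapsto(k,1)$, so an invariant $V$ containing a crossing loop also contains its images, which have distinct homology classes. Two independent classes then give full essentiality directly.

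\textbf{Main obstacle.} The hardest step is case (1): excluding $V$ being an invariant essential annulus-type set, that is, a strip, without loops in $\Lambda_h$. There I would first try the bounded-deviation/strip lemma: such $V$ forces bounded deviation in a rational direction, contradicting the total irrationality of the rotation vector.
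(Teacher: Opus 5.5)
There is a genuine gap, and it sits at the step the lemma actually hinges on. Your periodicity step matches the paper: every component $W$ of $V$ has a finite orbit, so $f^p(W)=W$ for some $p\ge1$.

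The inference ``$\widehat h$ is unbounded on a lift of $W$, hence $W$ contains a loop with nonzero $h$-winding'' does not follow. Unboundedness itself is true for an $f^p$-invariant component, but it comes from the dynamics, not from $h(W)=\T^1$. Unboundedness also does not produce a loop. A tube about the graph of $y=\tfrac14\tanh x$ in $\R\times\T^1$, of width $\tfrac1{16}e^{-2|x|-2}$, is disjoint from all its nontrivial deck translates. It therefore projects to an open disc whose lift is unbounded in both directions, yet the disc contains no such loop.

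The hypothesis that $V\cap h^{-1}(\theta)$ has finitely many components must be used exactly here, and you never use it. The paper argues as follows.
\begin{enumerate}
\item Take a path $\eta\subset W$ from $x$ to $f^p(x)$. A real lift of $h\circ\eta$ has displacement $p\alpha+\ell\neq0$, so the concatenation of $\eta,f^p\circ\eta,\dots,f^{(n-1)p}\circ\eta$ has displacement $n(p\alpha+\ell)$.
\item Once this path crosses more levels $t+m$ than $V\cap h^{-1}(\theta)$ has components, two crossings at distinct levels lie in one component $C$.
\item Close the segment between them by a path inside the component of $W\cap h^{-1}(J)$ containing $C$, where $J$ is a small arc about $\theta$. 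This gives a loop in $W$ with nonzero $h$-winding.
\item Splitting that loop into simple loops yields a simple essential $\gamma\subset W$ with primitive class $v$.
\end{enumerate}

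With $\gamma$ in hand, connectedness needs no separate argument. Your plan to force $p=1$ by a cyclic-order argument is only stated as ``should'', and it is unnecessary. Once every component is shown to be fully essential, any two components meet: loops of classes $(1,0)$ and $(0,1)$ taken in them have intersection number $\pm1$. So $V$ has a single component.

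In case (2) your shearing idea works if applied inside one component. Both $\gamma$ and $f^p(\gamma)$ lie in $W$, with classes $(a,b)$ and $(a+kpb,b)$ where $b\neq0$. Their determinant is $-kpb^2\neq0$. Note that $A_k$ shears the crossing class and fixes the fibre class $(1,0)$, not the other way round.

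In case (1) the ``invariant strip complement'' is never constructed, and no rotation relation is actually derived. The missing argument is concrete:
\begin{enumerate}
\item Suppose $W$ is not fully essential. Pick $w$ with $\det(v,w)=1$ and a lift $\widehat W\supset\widehat\gamma$ in $\R^2/\Z v$.
\item The translates $\widehat W+nw$ are pairwise disjoint; otherwise $W$ contains a loop not parallel to $v$ and is fully essential.
\item Hence $\widehat W$ lies between $\widehat\gamma-w$ and $\widehat\gamma+w$.
\item A lift of $f^p$ maps $\widehat W$ onto $\widehat W+jw$, so $\det(v,F^{pn}(z)-z)=nj+O(1)$.
\item Dividing by $pn$ and using \eqref{eq:uniform-rotation} gives $\det(v,\rho)=j/p$, contradicting total irrationality.
\end{enumerate}
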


\begin{proof}
Let $W$ be a component of $V$. Every fibre has empty interior, so $h$ is not constant on $W$.
Since $W$ is connected, $h(W)$ contains an open arc $I$. The map $f$ permutes the components of $V$.
Suppose that the iterates $f^n(W)$ are all distinct. Irrationality gives $\theta-n\alpha\in I$
for infinitely many $n\geq0$. Thus infinitely many of these iterates meet $h^{-1}(\theta)$.
Each connected subset of $V\cap h^{-1}(\theta)$ lies in one component of $V$.
The distinct iterates therefore give infinitely many components of $V\cap h^{-1}(\theta)$.
This contradicts the hypothesis. Hence $f^p(W)=W$ for some
$p\geq1$.

Choose $x\in W$ and a path $\eta$ in $W$ from $x$ to $f^p(x)$. A real lift of $h\circ\eta$ has endpoint
difference $p\alpha+\ell$ for some $\ell\in\Z$. This difference is nonzero because $\alpha$ is irrational. Since
$h\circ f^{jp}\circ\eta=R_{jp\alpha}\circ h\circ\eta$, each iterated path contributes the same difference.
Thus the concatenated path
\begin{equation}
 \eta_n=\eta*(f^p\circ\eta)*\cdots*(f^{(n-1)p}\circ\eta)
\end{equation}
lies in $W$, and a real lift of $h\circ\eta_n$ has endpoint difference $n(p\alpha+\ell)$.

Fix a real representative $t$ of $\theta$. For large $n$, this lift crosses more distinct levels $t+m$, $m\in\Z$,
than there are connected components of $V\cap h^{-1}(\theta)$. Two crossing points at distinct levels $t+m$ and
$t+m'$ therefore lie in the same component $C$ of $V\cap h^{-1}(\theta)$. Since $C$ is connected and meets $W$,
it lies in $W$. The segment of $\eta_n$ between these crossings projects under $h$ to a loop with winding
$m'-m\ne0$.

Choose a small open arc $J$ containing $\theta$. The component of $W\cap h^{-1}(J)$ containing $C$ is open and
connected, hence path connected. Close the segment by a path in this component. The image of the closing path under $h$ stays in $J$ and has both endpoints at $\theta$.
It therefore has zero winding. The resulting loop in $W$ has nonzero winding under $h$.
Perturb it within $W$ to have finitely many transverse self-intersections and split it into simple loops.
Winding is additive, so at least one of these loops, denoted by $\gamma$, still has nonzero winding under $h$.
It is essential in the torus and has a primitive winding vector $v\in\Z^2$.

We now show that $W$ is fully essential. We use an elementary fact: if a torus domain contains two loops with
nonzero algebraic intersection, then it is fully essential. Indeed, a compact subsurface in the domain containing the loops has genus one.
It contains a torus with one disc removed.

In the Dehn-twist case, write $v=(a,b)$. Since $h\simeq\pr_2$, the winding of $h\circ\gamma$ is $b$, so $b\ne0$.
Both $\gamma$ and $f^p(\gamma)$ lie in $W$. Their winding vectors are $(a,b)$ and $(a+kp b,b)$, whose determinant
is $-kp b^2\ne0$. Thus $W$ is fully essential.

In the identity case, suppose that $W$ is not fully essential. Choose $w\in\Z^2$ with $\det(v,w)=1$ and pass to
the annular cover $\R^2/\Z v$. Let $\widehat W$ be a lift component of $W$ containing a lift
$\widehat\gamma$ of $\gamma$. The sets $\widehat W+n w$, $n\in\Z$, are pairwise disjoint.
Otherwise, $\widehat W$ contains a point and a translate of that point by a nonzero multiple of $w$.
Join them by a path in $\widehat W$. Its projection is a loop in $W$ whose winding vector is not parallel to $v$.
Together with $\gamma$, this loop would force $W$ to be fully essential.

The curves $\widehat\gamma+n w$ are therefore disjoint and ordered by $n$. Since $\widehat W$ contains
$\widehat\gamma$ and avoids $\widehat\gamma-w$ and $\widehat\gamma+w$, it lies between these two neighbouring
curves. Its transverse coordinate is bounded. Let $F$ be a lift of $f$, with rotation vector $\rho$.
The map induced by $F^p$ on the annular cover sends $\widehat W$ onto $\widehat W+jw$ for some $j\in\Z$.
Since $F$ commutes with deck translations, its $pn$-th iterate sends $\widehat W$ onto $\widehat W+n j w$.
Thus, for a point $z\in\R^2$ projecting into $\widehat W$,
\begin{equation}
 \det\bigl(v,F^{pn}(z)-z\bigr)=nj+O(1).
\end{equation}
Dividing by $pn$ and using \eqref{eq:uniform-rotation} gives $\det(v,\rho)=j/p$, contrary to total irrationality.
Hence $W$ is fully essential in this case as well.

Every component of $V$ is therefore fully essential. Any two such domains meet, by the intersection argument
above. Since distinct components are disjoint, $V$ is connected.
\end{proof}

\subsection{Prime ends and Jordan curves}

The next lemma concerns the geometry of the fibres. It does not use $f$.
We fix a compact set meeting every fibre. It need not be invariant.
At a suitable parameter, the fibre is a Jordan curve. The lemma controls every impression of every nearby fibre on both sides.
The nearby fibres may be singular. This control allows gaps between disjoint compact sets to persist on nearby prime-end circles.

Work over a bounded parameter interval in the cyclic cover $q:\mathbb A\to\T^2$. Recall that
$A_t=\widehat h^{-1}(t)$. The sets $U^-(A_t)$ and $U^+(A_t)$ are the components of $\mathbb A\setminus A_t$
containing the lower and upper ends, respectively. Their boundaries are $A_t^\pm=\partial U^\pm(A_t)$.
A compact set $K\subset\T^2$ lies in the torus. Its preimage $q^{-1}(K)$ lies in the annular cover.
We use $q^{-1}(K)$ when taking intersections with $A_t$, its side boundaries or prime-end impressions.
Write $(x,y)\in\R\times\T^1$ for annular coordinates, with $x$ increasing towards the upper end.
Cap the lower end of $U^-(A_t)$ and use the planar coordinate $z=\exp(2\pi(x+\mathrm i y))$.
The resulting domains $D_t^-$ are bounded and simply connected, and contain a common disc about $0$.
For $U^+(A_t)$, cap the upper end and use $z=\exp(-2\pi(x+\mathrm i y))$.
Denote the resulting domains by $D_t^+$.
For $\sigma\in\{-,+\}$, let
$\varphi_t^\sigma:\mathbb D\to D_t^\sigma$ be the Riemann maps normalised by $\varphi_t^\sigma(0)=0$ and
$(\varphi_t^\sigma)'(0)>0$. Here the complex derivative is a positive real number.
All boundary comparisons take place in a fixed compact annular region.

Apply Lemma~\ref{lem:prime-end-facts} to each $D_t^\sigma$. Denote the impression at
$\zeta\in\partial\mathbb D$ by $I_t^\sigma(\zeta)$, regarded as a subset of $A_t^\sigma$. It is the full
cluster set of $\varphi_t^\sigma$ at $\zeta$. These impressions cover $A_t^\sigma$. For each fixed $t$, the set
$\{(\zeta,x)\in\partial\mathbb D\times A_t^\sigma:x\in I_t^\sigma(\zeta)\}$ is compact.

Whenever $A_t$ is a Jordan curve, write $F_t^\sigma$ for the boundary extension of $\varphi_t^\sigma$. We regard its values as
points of $A_t$, using the fixed planar coordinate for uniform norms. For a compact set $K\subset\T^2$ with
$h(K)=\T^1$, put
\begin{equation}\label{eq:prime-end-colour-sets}
 P_K^\sigma(t)=\{\zeta\in\partial\mathbb D:I_t^\sigma(\zeta)\cap q^{-1}(K)\ne\varnothing\}.
\end{equation}
Each $P_K^\sigma(t)$ is compact. Indeed, the pairs $(\zeta,x)$ with
$x\in I_t^\sigma(\zeta)\cap q^{-1}(K)$ form a compact set. Its projection onto the first coordinate is $P_K^\sigma(t)$.
These sets are also nonempty. To see this, take $s_n\nearrow t$ and choose
$x_n\in q^{-1}(K)\cap A_{s_n}$. By \eqref{eq:left-Hausdorff}, a subsequence converges to a point of $A_t^-$.
The limit also belongs to $q^{-1}(K)$, which is closed. Taking $s_n\searrow t$ and using
\eqref{eq:right-Hausdorff} gives a point of $q^{-1}(K)\cap A_t^+$. Since the impressions cover each side boundary,
$P_K^\sigma(t)$ is nonempty for both signs.

\begin{lemma}\label{lem:prime-end-good-fibre}
Let $h:\T^2\to\T^1$ be continuous and homotopic to a coordinate projection.
Suppose that every fibre is a thin essential annular continuum and that the fibres are Jordan curves on a residual set of parameters.
Let $K\subset\T^2$ be compact with $h(K)=\T^1$.
Then there is a residual set $G_K\subset\T^1$ such that, for every real lift $t_0$ of a parameter in $G_K$,
the fibre $A_{t_0}$ is a Jordan curve and the following conclusions hold.
In the limits below, $t$ varies through all nearby parameters, including those for which $A_t$ is not a Jordan curve.
\begin{enumerate}
\item The sets $q^{-1}(K)\cap A_t$ converge to $q^{-1}(K)\cap A_{t_0}$ in the Hausdorff metric as $t\to t_0$.
For each $\sigma\in\{-,+\}$, the same is true with $A_t$ replaced by $A_t^\sigma$.
\item For each $\sigma\in\{-,+\}$, the sets $P_K^\sigma(t)$ converge to $P_K^\sigma(t_0)$ in the Hausdorff metric as $t\to t_0$.
\item For every $\varepsilon>0$ there is $\delta>0$ such that, for every $t$ with $|t-t_0|<\delta$ and both signs $\sigma\in\{-,+\}$,
\begin{equation}\label{eq:prime-end-graph-estimate}
 \sup_{\zeta\in\partial\mathbb D}\ \sup_{x\in I_t^\sigma(\zeta)}
 d\bigl(x,F_{t_0}^\sigma(\zeta)\bigr)<\varepsilon.
\end{equation}
\end{enumerate}
\end{lemma}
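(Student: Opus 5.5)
The set $G_K$ will be the intersection of three residual sets, each invariant under integer translation of the real parameter. The first is the given residual set of Jordan-curve parameters. The second is the set of continuity points of the upper semicontinuous compact-valued maps $t\mapsto q^{-1}(K)\cap A_t$ and $t\mapsto q^{-1}(K)\cap A_t^\sigma$, restricted to a fixed compact fundamental region. The map $t\mapsto A_t$ is upper semicontinuous because $\widehat h$ is continuous and proper. For the side boundaries, I will use the one-sided Hausdorff limits \eqref{eq:left-Hausdorff}--\eqref{eq:right-Hausdorff} together with $A_s^\sigma\subset A_s$ to check upper semicontinuity of $t\mapsto q^{-1}(K)\cap A_t^\sigma$. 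Lemma~\ref{lem:baire-continuity}\eqref{item:baire-sections} then gives a dense $G_\delta$ set of continuity points for each of these three maps. The third set consists of continuity points of $t\mapsto P_K^\sigma(t)$, which is handled after item (3). Projecting to $\T^1$ and intersecting countably many residual sets leaves a residual set, so item (1) holds on $G_K$ by construction.

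The key step is item (3), which needs only that $A_{t_0}$ is a Jordan curve. Fix $\sigma=-$; the other sign is symmetric. Since $A_{t_0}=C_{t_0}$, we have $A_t\to A_{t_0}$ and $A_t^\sigma\to A_{t_0}$ in the Hausdorff metric as $t\to t_0$, from both sides, by \eqref{eq:left-Hausdorff} and \eqref{eq:right-Hausdorff}. Hence $\partial D_t^-\to\partial D_{t_0}^-$. Lemma~\ref{lem:riemann-map-convergence}\eqref{item:jordan-domain-limit} then gives $\varphi_t^-\to\varphi_{t_0}^-$ locally uniformly on $\mathbb D$.

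I will upgrade this to control of impressions with a modulus argument, which I expect to be the main obstacle. Fix $\varepsilon>0$. Because $F_{t_0}$ is uniformly continuous on $\overline{\mathbb D}$, there is $r<1$ such that $\varphi_{t_0}$ maps each radial segment $[r\zeta,\zeta]$ into the $\varepsilon/4$-neighbourhood of $F_{t_0}(\zeta)$.

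The point to establish is that for $t$ close to $t_0$, the whole image $\varphi_t(\{r\le|z|<1,\ |\arg z-\arg\zeta|<\kappa\})$ lies within $\varepsilon$ of $F_{t_0}(\zeta)$, uniformly in $\zeta$. I will argue by crosscuts. Take the hyperbolic geodesic crosscut $c_{t_0}$ of $D_{t_0}$ separating the prime end $\zeta$ from $0$. Its endpoints lie on the Jordan curve near $F_{t_0}(\zeta)$, and its diameter is small. By local uniform convergence on a slightly smaller circle, together with Hausdorff closeness of the boundaries, this crosscut can be pushed to a crosscut of $D_t$ of small diameter. The region it cuts off contains $\varphi_t$ of the sector. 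Its diameter is small because any large piece of that region would have to stay within Hausdorff distance of $\partial D_{t_0}$ near a short Jordan arc, and thinness of $A_t$ prevents the region from swelling. The Jordan curve property of the limit, namely a uniform modulus that small separated arcs have small complementary pieces, is exactly what gives uniformity in $\zeta$.

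If this estimate fails, I would fall back on a contradiction argument. Suppose there are sequences $t_n\to t_0$, $\zeta_n\to\zeta$ and $x_n\in I_{t_n}(\zeta_n)$ with $x_n\to x\ne F_{t_0}(\zeta)$. Pick $z_n$ near $\zeta_n$ with $\varphi_{t_n}(z_n)$ near $x_n$. A path from $\varphi_{t_n}(r\zeta_n)$ to $\varphi_{t_n}(z_n)$ inside the image of a thin sector would then have to cross a short crosscut. Comparing with the limit Jordan domain yields the contradiction via a Wolff-type length–area estimate, which is uniform in $n$ because the domains share common bounds. Since cluster sets are limits of $\varphi_t(z)$ as $z\to\zeta$, this gives \eqref{eq:prime-end-graph-estimate}.

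For item (2), item (3) shows that every limit of $P_K^\sigma(t)$ is contained in $\{\zeta: F_{t_0}^\sigma(\zeta)\in q^{-1}(K)\}=P_K^\sigma(t_0)$; this is upper semicontinuity at $t_0$. For the reverse inclusion, I will add the continuity points of $t\mapsto P_K^\sigma(t)$ to $G_K$. Upper semicontinuity of this map everywhere should follow from joint closedness of the impression relation, and Lemma~\ref{lem:baire-continuity}\eqref{item:baire-sections} then supplies the continuity points. If joint closedness proves awkward at singular $t$, I will argue directly instead. Given $\zeta_0\in P_K^\sigma(t_0)$, set $x_0=F_{t_0}(\zeta_0)\in q^{-1}(K)$. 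Item (1) gives $x_t\in q^{-1}(K)\cap A_t^\sigma$ close to $x_0$. This point lies in some impression $I_t^\sigma(\zeta_t)$, and by (3) we get $F_{t_0}(\zeta_t)$ close to $x_0$. Injectivity of $F_{t_0}$ then forces $\zeta_t$ close to $\zeta_0$, which is lower semicontinuity.
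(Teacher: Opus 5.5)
The gap is in item~(3). You say this estimate needs only that $A_{t_0}$ be a Jordan curve, and you try to obtain it by pushing crosscuts or by a length--area argument. That claim is false, so no such argument can be completed. Hausdorff convergence of $A_t$ to a Jordan curve gives kernel convergence and locally uniform convergence of $\varphi_t^\sigma$, as you say. Uniform control up to the boundary would need uniform local connectedness of the complements of the $D_t^\sigma$, and nothing in the hypotheses provides it.

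For a counterexample, work locally near a round fibre $A_0=\{0\}\times\T^1$ and take $A_t=\{t\}\times\T^1$ for $t\ge0$. For a sequence $t_n\nearrow0$, let $\{\widehat h<t_n\}$ be the region below $x\approx t_n$ together with a channel of width $\epsilon_n\to0$. The channel lies just below $A_0$ along half a turn and is entered through a gate of width tending to zero. Fill the closed annuli between consecutive level curves by Jordan curves. Then every fibre is a Jordan curve, and $\widehat h$ is continuous at $A_0$ because the level curves converge to $A_0$ in the Hausdorff metric.

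A short crosscut separates each channel from the capped end, so the channel's harmonic measure tends to zero. Its prime ends therefore form arcs $\Gamma_n\subset\partial\mathbb D$ of length tending to zero, while $F^-_{t_n}(\Gamma_n)$ spreads over half the circle. Since $F^-_0$ is uniformly continuous, $\sup_\zeta d(F^-_{t_n}(\zeta),F^-_0(\zeta))$ stays bounded below, and item~(3) fails at $t_0=0$.

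This is exactly where your crosscut step breaks. The short crosscut at the gate cuts off the whole channel, which stays Hausdorff-close to $\partial D^-_0$ along a long arc, not a short one. Thinness is irrelevant here, since every fibre is a Jordan curve. Moreover, with $K=\T^2$ the parameter $0$ satisfies every condition you put into $G_K$: Jordan fibre, continuity of the intersection maps, and $P^\sigma_K\equiv\partial\mathbb D$. So your $G_K$ is too large.

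What is missing is a genericity condition on the boundary correspondence itself. The paper fixes a dense $G_\delta$ set $G$ of Jordan parameters. At each $t\in G$ one has $A_s\to A_t$, so Lemma~\ref{lem:riemann-map-convergence} makes $t\mapsto\varphi^\sigma_t(r\,\cdot)$ continuous on $G$. Hence $t\mapsto F^\sigma_t\in C(\partial\mathbb D,\mathbb C)$ is Baire-one, and $t_0$ is chosen as a continuity point relative to $G$ via Lemma~\ref{lem:baire-continuity}. Then $\|F^\sigma_s-F^\sigma_{t_0}\|_\infty<\eta$ for $s\in G$ near $t_0$, and the maximum principle gives $\sup_{\mathbb D}|\varphi^\sigma_s-\varphi^\sigma_{t_0}|<\eta$. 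For an arbitrary nearby $t$, write $D^-_t$ as the increasing union of the $D^-_{s_n}$ with $s_n\nearrow t$ in $G$, and treat $D^+_t$ symmetrically with $s_n\searrow t$. The kernel theorem then gives $\sup_{\mathbb D}|\varphi^\sigma_t-\varphi^\sigma_{t_0}|\le\eta$, which bounds every impression at once. Given item~(3), your direct argument for item~(2) is the paper's.

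Two smaller points. First, $t\mapsto q^{-1}(K)\cap A^\sigma_t$ is not upper semicontinuous in general: for $s\searrow t$ the sets $A^-_s$ accumulate on $A^+_t$, which may contain hairs outside $A^-_t$. The side version of item~(1) should instead be deduced at $t_0$ from continuity of $t\mapsto q^{-1}(K)\cap A_t$ together with the one-sided Hausdorff limits. Second, the impression relation is not jointly closed in $t$; the channel example shows this even at a Jordan parameter. So your first plan for item~(2), via upper semicontinuity of $P^\sigma_K$, is unavailable.
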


\begin{proof}
Cover the base by countably many open intervals on which the parameter has a real lift. On each interval it is
enough to exclude a meagre set; the union of the resulting exceptional sets is meagre in the base.
Fix one such interval and choose a dense $G_\delta$ subset $G$ whose fibres are Jordan curves.
We first choose the parameters $t_0$ at which the conclusions will hold.

For $t\in G$, both side boundaries equal $A_t$. The one-sided limits in
\eqref{eq:left-Hausdorff} and \eqref{eq:right-Hausdorff} therefore imply that $A_s\to A_t$ in the Hausdorff metric as $s\to t$.
In particular, the boundaries of the capped domains converge when $s\to t$ within $G$.
Lemma~\ref{lem:riemann-map-convergence}\eqref{item:jordan-domain-limit} gives convergence of
$\varphi_s^\sigma$ to $\varphi_t^\sigma$ uniformly on each compact subset of $\mathbb D$.
Thus, for each $0<r<1$, the map $t\mapsto(\zeta\mapsto\varphi_t^\sigma(r\zeta))$ is continuous on $G$ in the uniform norm.
For each fixed $t\in G$, these maps converge uniformly on $\partial\mathbb D$ to $F_t^\sigma$ as $r\nearrow1$.
It follows that $t\mapsto F_t^\sigma$ is a Baire-one map from $G$ to
$C(\partial\mathbb D,\mathbb C)$, equipped with the uniform norm.

The map $t\mapsto q^{-1}(K)\cap A_t$ is upper semicontinuous on the full interval.
Indeed, these nonempty compact sets lie in a fixed compact annular region, and any limit point of their elements belongs to the limiting fibre and to $q^{-1}(K)$.
Apply Lemma~\ref{lem:baire-continuity} to this map on the full interval and to both boundary maps on $G$.
Since $G$ is a dense $G_\delta$ set, there is a residual set of parameters $t_0\in G$ at which the intersection map is continuous and both boundary maps are continuous relative to $G$.
Fix such a parameter $t_0$.
The first convergence in part~(1) is exactly the continuity of the intersection map at $t_0$.

We first prove \eqref{eq:prime-end-graph-estimate}.
Fix $\varepsilon>0$. Choose $\eta>0$ so that planar distance at most $\eta$ between points of the fixed compact annular region gives annular distance at most $\varepsilon/2$, in either chart.
Continuity of both boundary maps at $t_0$ gives $\delta>0$ such that
$\|F_s^\sigma-F_{t_0}^\sigma\|_\infty<\eta$ whenever $s\in G$, $|s-t_0|<2\delta$ and $\sigma\in\{-,+\}$.
Choose $\delta$ small enough that this neighbourhood lies in the parameter interval.
The maximum principle gives
$|\varphi_s^\sigma(z)-\varphi_{t_0}^\sigma(z)|<\eta$ for every $z\in\mathbb D$ and the same parameters $s$.

Now fix any $t$ with $|t-t_0|<\delta$. No assumption is made on the fibre $A_t$.
Since $G$ is dense, choose $s_n^-\in G$ increasing to $t$ and $s_n^+\in G$ decreasing to $t$, with $|s_n^\sigma-t_0|<2\delta$.
The descriptions of the complementary domains in \eqref{eq:fibre-complement-sides} give
\begin{equation}\label{eq:increasing-capped-domains}
 D_t^-=\bigcup_n D_{s_n^-}^-,\qquad
 D_t^+=\bigcup_n D_{s_n^+}^+.
\end{equation}
Both unions are increasing. Lemma~\ref{lem:riemann-map-convergence}\eqref{item:increasing-domains} therefore gives
$\varphi_{s_n^\sigma}^\sigma\to\varphi_t^\sigma$ uniformly on each compact subset of $\mathbb D$.
Passing to the limit in the preceding bound yields
$|\varphi_t^\sigma(z)-\varphi_{t_0}^\sigma(z)|\leq\eta$ for every $z\in\mathbb D$ and both signs.

Fix $\zeta\in\partial\mathbb D$ and $x\in I_t^\sigma(\zeta)$.
By the definition of an impression, there are points $z_n\in\mathbb D$ tending to $\zeta$ such that
$\varphi_t^\sigma(z_n)\to x$ in the corresponding planar chart.
Since $A_{t_0}$ is a Jordan curve, the continuous boundary extension gives
$\varphi_{t_0}^\sigma(z_n)\to F_{t_0}^\sigma(\zeta)$.
The planar distance between $x$ and $F_{t_0}^\sigma(\zeta)$ is therefore at most $\eta$.
Their annular distance is at most $\varepsilon/2$. This bound is uniform in $t$, $\sigma$, $\zeta$ and $x$ in the specified ranges, so it proves \eqref{eq:prime-end-graph-estimate}.

We next prove convergence of the intersections with the side boundaries:
\begin{equation}\label{eq:prime-end-side-colours}
 q^{-1}(K)\cap A_t^\sigma\longrightarrow q^{-1}(K)\cap A_{t_0}
 \quad\text{in the Hausdorff metric as }t\to t_0.
\end{equation}
Every limit point belongs to $q^{-1}(K)\cap A_{t_0}$, since
$q^{-1}(K)\cap A_t^\sigma\subset q^{-1}(K)\cap A_t$.
For the converse, fix $\varepsilon>0$.
Continuity of $s\mapsto q^{-1}(K)\cap A_s$ at $t_0$ gives a neighbourhood in which every point of
$q^{-1}(K)\cap A_{t_0}$ is within $\varepsilon/2$ of $q^{-1}(K)\cap A_s$.
Fix $t$ in a smaller neighbourhood and $y\in q^{-1}(K)\cap A_{t_0}$.
Take $s_n\nearrow t$ within the first neighbourhood and choose $x_n\in q^{-1}(K)\cap A_{s_n}$ with $d(x_n,y)<\varepsilon/2$.
A subsequence converges to a point of $q^{-1}(K)\cap A_t^-$ by \eqref{eq:left-Hausdorff}.
This point is within $\varepsilon$ of $y$.
Taking $s_n\searrow t$ and using \eqref{eq:right-Hausdorff} gives the same conclusion for $A_t^+$.
The neighbourhood is independent of $y$, which proves \eqref{eq:prime-end-side-colours}.

We now prove convergence of the prime-end sets. Fix $\sigma\in\{-,+\}$.
Suppose that $t_n\to t_0$ and $\zeta_n\in P_K^\sigma(t_n)$ converge to $\zeta$.
Choose $x_n\in I_{t_n}^\sigma(\zeta_n)\cap q^{-1}(K)$.
By \eqref{eq:prime-end-graph-estimate} and continuity of $F_{t_0}^\sigma$, we have
$x_n\to F_{t_0}^\sigma(\zeta)$.
The limit belongs to $q^{-1}(K)$, so $\zeta\in P_K^\sigma(t_0)$.

Conversely, let $\zeta\in P_K^\sigma(t_0)$.
By \eqref{eq:prime-end-side-colours}, choose $x_t\in q^{-1}(K)\cap A_t^\sigma$ such that
$d(x_t,F_{t_0}^\sigma(\zeta))\to0$, uniformly over $\zeta\in P_K^\sigma(t_0)$.
Each $x_t$ belongs to an impression $I_t^\sigma(\zeta_t)$, so $\zeta_t\in P_K^\sigma(t)$.
Equation~\eqref{eq:prime-end-graph-estimate} gives
$d(F_{t_0}^\sigma(\zeta_t),F_{t_0}^\sigma(\zeta))\to0$ uniformly over these $\zeta$.
The inverse of $F_{t_0}^\sigma$ is uniformly continuous. Hence $\zeta_t\to\zeta$ uniformly as well.
Together with the preceding limit-point argument, this proves the claimed Hausdorff convergence.
\end{proof}

\subsection{Oriented gaps}

For two disjoint compact subsets $E_0,E_1$ of an oriented circle, a gap is a component of the complement of
$E_0\cup E_1$. It has type $ij$, with $ij\in\{01,10\}$, if its
initial endpoint belongs to $E_i$ and its terminal endpoint belongs to $E_j$. We call these \textbf{mixed gaps}.

\begin{proof}[Proof of Theorem~\ref{thm:generic-jordan-fibres}]
Assume for a contradiction that $M_0$ and $M_1$ are distinct minimal sets.

\smallskip
\noindent\emph{Step 1: choose a common Jordan fibre.}
The sets $M_0$ and $M_1$ are disjoint. For each $i\in\{0,1\}$, the image $h(M_i)$ is a nonempty compact
$R_\alpha$-invariant subset of the circle. Minimality of $R_\alpha$ therefore gives $h(M_i)=\T^1$.
Apply Lemma~\ref{lem:prime-end-good-fibre} separately to $M_0$ and $M_1$, and choose a real lift $t_0$ of a
parameter in the intersection of the two residual sets. Then $A_{t_0}$ is a Jordan curve, and all three conclusions of the lemma hold for both
minimal sets. Write $P_i^\sigma(t)=P_{M_i}^\sigma(t)$ for $i\in\{0,1\}$ and $\sigma\in\{-,+\}$.
Since impressions on $A_{t_0}$ are single points, the disjointness of $M_0$ and $M_1$ implies that
$P_0^\sigma(t_0)$ and $P_1^\sigma(t_0)$ are disjoint. They are nonempty compact sets. Their Hausdorff continuity
shows that they remain disjoint for all $t$ sufficiently close to $t_0$.

\smallskip
\noindent\emph{Step 2: define two invariant gap sets.}
Orient both prime-end circles in the common annular direction. Their orientations agree when the fibre is a
Jordan curve, although for one capped domain this reverses the analytic boundary orientation. Since
$\widehat h\circ\widehat f=\widehat h+\alpha$, the lift $\widehat f$ preserves both ends. It therefore induces
homeomorphisms of the prime-end circles which carry impressions to impressions. In both homotopy classes,
$f_*$ is the identity on $\Lambda_h$, so these homeomorphisms preserve the common orientation.

For any parameter $t$, a complementary gap of $P_0^\sigma(t)\cup P_1^\sigma(t)$ has type $ij$ if its initial
endpoint lies in $P_i^\sigma(t)\setminus P_j^\sigma(t)$ and its terminal endpoint lies in
$P_j^\sigma(t)\setminus P_i^\sigma(t)$. This definition still makes sense when the two compact sets overlap.
For $ij\in\{01,10\}$, define $U_{ij}\subset\T^2$ as follows. If $x\in A_t$, then $q(x)\in U_{ij}$ when every
prime end whose impression contains $x$ lies in a gap of type $ij$. This is required on every side
$\sigma\in\{-,+\}$ for which $x\in A_t^\sigma$. The condition is imposed on every such prime end because a point
of a singular fibre may belong to several impressions.

This definition is unchanged under deck translations and hence does not depend on the lift $x$ of $q(x)$.
Every point belongs to an impression on at least one side, so $U_{01}\cap U_{10}=\varnothing$. The dynamics
preserves the sides, their orientations, the impressions and both minimal sets. Thus each $U_{ij}$ is invariant.
Set $V_{ij}=\operatorname{int}U_{ij}$.

\smallskip
\noindent\emph{Step 3: show that both interiors are nonempty and have finite fibre sections.}
Let $\mathcal G_{ij}$ be the gaps of type $ij$ in
$A_{t_0}\setminus q^{-1}(M_0\cup M_1)$. On the Jordan curve $A_{t_0}$, the two side definitions give the same
cyclic order. In either prime-end parametrisation, the two compact sets are positively separated. Hence the
mixed gaps have a common positive lower bound on their angular lengths. The two families $\mathcal G_{01}$ and
$\mathcal G_{10}$ are therefore finite. Both are nonempty because the two compact sets are nonempty and disjoint.

We prove that
\begin{equation}\label{eq:whole-mixed-gap-sections}
 q^{-1}(V_{ij})\cap A_{t_0}=q^{-1}(U_{ij})\cap A_{t_0}
   =\bigcup_{J\in\mathcal G_{ij}}J.
\end{equation}
The second equality follows directly from the definition because every impression on the Jordan fibre $A_{t_0}$
is a single point. It remains to prove that each point of a mixed gap is interior to the corresponding set.

Fix $x\in J\in\mathcal G_{ij}$. On each prime-end circle, choose a small compact arc $L^\sigma$ around the
unique prime end representing $x$, with $L^\sigma$ contained in the corresponding gap. Choose disjoint closed
arc neighbourhoods of the two endpoints of that gap. The first avoids $P_j^\sigma(t_0)$, the second avoids
$P_i^\sigma(t_0)$, and the closed arc between them contains $L^\sigma$ and avoids
$P_0^\sigma(t_0)\cup P_1^\sigma(t_0)$. The Hausdorff convergence in Lemma~\ref{lem:prime-end-good-fibre}
preserves these exclusions for nearby $t$. It also ensures that the two endpoint neighbourhoods still meet
$P_i^\sigma(t)$ and $P_j^\sigma(t)$, respectively. Thus $L^\sigma$ remains inside a gap of type $ij$.

For each sign, the compact set
$F_{t_0}^\sigma(\partial\mathbb D\setminus\operatorname{int}L^\sigma)$ does not contain $x$ and hence has
positive distance from $x$. The estimate \eqref{eq:prime-end-graph-estimate} now shows that every prime end whose
impression contains a point $x'$ sufficiently close to $x$ lies in $L^\sigma$. Here
$t=\widehat h(x')$ is close to $t_0$. The same argument applies on every side containing $x'$. Hence every such
$x'$ belongs to $q^{-1}(U_{ij})$. Thus $x$ is an interior point of $q^{-1}(U_{ij})$, and the local
homeomorphism $q$ gives $q(x)\in V_{ij}$. This proves \eqref{eq:whole-mixed-gap-sections}.

It follows that $V_{01}$ and $V_{10}$ are disjoint, nonempty, invariant open sets.
The restriction of $q$ to $A_{t_0}$ is a homeomorphism onto $h^{-1}(t_0\bmod1)$.
Equation~\eqref{eq:whole-mixed-gap-sections} shows that each $V_{ij}$ meets this torus fibre in finitely many
connected components.

\smallskip
\noindent\emph{Step 4: obtain the contradiction.}
Lemma~\ref{lem:invariant-open-finite-fibre} shows that both $V_{01}$ and $V_{10}$ are connected and fully essential.
Two fully essential domains in the torus must intersect, whereas $V_{01}$ and $V_{10}$ are disjoint. This
contradiction proves that $f$ has a unique minimal set.
\end{proof}

\subsection{Factors of the Denjoy model}

The examples in Theorem~\ref{thm:thin-counterexample} are obtained as limits of conjugates of Avila's model $g$.
A continuous surjection $q:\T^2\to\T^2$ satisfying $q\circ g=f\circ q$ would instead realise $f$ as a torus factor
of $g$. The following corollary rules this out whenever $f$ is a totally irrational pseudo-rotation admitting an
irrational circle factor with thin fibres.

\begin{corollary}\label{cor:denjoy-no-thin-factor}
Let $g:\T^2\to\T^2$ be the map constructed in Proposition~\ref{prop:denjoy-model}, and let $f:\T^2\to\T^2$ be a totally irrational
pseudo-rotation admitting an irrational circle factor $h:\T^2\to\T^1$, with $h\simeq\pr_1$ and every fibre thin.
Then there is no continuous surjection $q:\T^2\to\T^2$ satisfying
\begin{equation}\label{eq:denjoy-compatible-factor}
 q\circ g=f\circ q.
\end{equation}
No monotonicity assumption on $q$ is required.
\end{corollary}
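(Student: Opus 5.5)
The plan is to compose the hypothetical factor map with the circle factor, show that the resulting circle factor of $g$ has a rigid form, read off local connectedness of most fibres of $h$, and then invoke Corollary~\ref{cor:locally-connected}.

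Suppose $q$ exists and put $c=h\circ q$, so that $c\circ g=R_\alpha\circ c$ and $c$ is a circle factor of $g$. The first step is to determine $c$. On each minimal set $K\times\{s\}$, $g$ acts as $f_0|_K$, whose maximal equicontinuous factor is $\psi|_K$. Hence $c(x,s)=\psi(x)+\phi(s)$ for $x\in K$, for some function $\phi:\T^1\to\T^1$. Continuity of $c$ makes $\phi$ continuous. I then want to show that $\phi$ is constant, or at least that each level set of $c$ over a parameter outside the two blown-up orbits is a single vertical circle $\{x_\theta\}\times\T^1$. I expect to use that $c$ restricted to a wandering annulus $g^j(A)$ is a rotate of $c|_A$, whose oscillation is controlled by $\phi$. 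Together with the homotopy class of $c$, which is determined by $q_*$ and $h\simeq\pr_1$, this should rule out a nonconstant $\phi$. This is the first place I am unsure. If $\phi$ has nonzero degree, the level sets of $\psi(x)+\phi(y)$ are graphs over $K$ in the $y$-direction. They are still continuous images of circles over parameters outside the countable set $\psi(\partial(\bigcup_j f_0^j(I^0\cup I^1)))$, and that property is all I need below. So I will phrase the claim as: for all $\theta$ outside a countable set, $c^{-1}(\theta)$ is a continuous image of a circle.

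The second step is to pass to the fibres of $h$. Since $q$ is surjective,
\begin{equation}
h^{-1}(\theta)=q\bigl(c^{-1}(\theta)\bigr)
\end{equation}
for every $\theta\in\T^1$. For $\theta$ outside the countable exceptional set, the right side is a continuous image of a circle. By Lemma~\ref{lem:locally-connected-separator} it is therefore locally connected. Thus the fibres of $h$ are locally connected on a co-countable, hence nonmeagre, set of parameters. By Lemma~\ref{prop:canonical-fibres}, every fibre is an essential annular continuum, and the fibres are thin by hypothesis. Corollary~\ref{cor:locally-connected} then says that $f$ has a unique minimal set $M$. No monotonicity of $q$ is used anywhere in this step.

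The third step derives the contradiction. The image of a minimal set under a factor map is minimal, so $q(K\times\{s\})=M$ for every $s$, and hence $q(K\times\T^1)=M$. Every $f$-invariant measure lifts to a $g$-invariant measure, which is supported in $K\times\T^1$. Consequently every $f$-invariant measure is supported in $M$. Surjectivity of $q$ then forces $q$ to map the union of the wandering annuli $g^j(\operatorname{int}(I^0\times\T^1))$ and $g^j(\operatorname{int}(I^1\times\T^1))$ onto $\T^2\setminus M$.

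I expect this final contradiction to be the main obstacle. My first attempt uses the rotation vector. Take lifts satisfying $\widetilde q\,G=F\,\widetilde q$. Evaluating on an orbit in $K\times\{s\}$ forces $q_*\rho=\rho$ for the linear part $q_*$ of $\widetilde q$. Total irrationality of $\rho$ then pins down $q_*$, and I expect $q_*=\operatorname{id}$. With $q_*=\operatorname{id}$, the vertical circle $\{x\}\times\T^1$ for $x\in K$ maps under $q$ to an essential loop inside $M$. The set $M$ is then an essential invariant continuum contained in the union of the fibres. I would try to show that $q$ collapses each $\{x\}\times\T^1$ onto the core $C_{\psi(x)}$. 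From there I would derive that the fibres have nonempty interior, contradicting thinness, or that some fibre $q(\partial g^j(A))$ bounds a region whose image is wandering, contradicting that all invariant measures, which project to Lebesgue, live on $M$. If the rotation-vector argument does not fix $q_*$, I would fall back on comparing the homotopy classes of $c$ and $\pr_1\circ q$ directly.
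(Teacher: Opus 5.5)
There are two genuine gaps. Your outline agrees with the paper up to the appeal to Corollary~\ref{cor:locally-connected}, but the two steps that carry the proof are left open.

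\textbf{First gap: the rigidity of $c=h\circ q$.} Knowing $c$ on $K\times\T^1$ is not enough, because the level sets $c^{-1}(\theta)$ may enter the wandering annuli, where you have no information. This is also why your fallback fails. If $\phi$ has nonzero degree, then for every $\theta$ some heights $y$ force the level set to meet $K\times\{y\}$ in the two endpoints of a wandering interval. Whatever joins them lies in a wandering annulus, so nothing you have shown makes $c^{-1}(\theta)$ a continuous image of a circle. Homotopy considerations cannot close this gap either: they constrain at most the degree of $\phi$, not whether a degree-zero $\phi$ is constant.

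The missing idea is to follow an orbit in a wandering annulus in both time directions. For $x\in\operatorname{int}I^0$, one has $g^{-j}(x,s)=(f_0^{-j}(x),s)$ for $j\geq0$ and $g^{j}(x,s)=(f_0^{j}(x),s+u(x))$ for $j\geq1$. Both orbits accumulate on $K\times\T^1$, since $|f_0^{j}(I^0)|\to0$. Equivariance and uniform continuity of $c$ then give $c(x,s)=m\psi(x)+\phi(s)=m\psi(x)+\phi(s+u(x))$. Because $u(I^0)$ contains an interval of length one, $\phi$ is constant. The same backward-orbit argument works over $I^1$, and equivariance then gives $c=m\,\psi\circ\pr_1+c_0$ on the whole torus. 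The paper packages this as the statement that every continuous $g$-invariant map $\T^2\to\T^1$ is constant, applied to $h\circ q-mh_0$.

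Here $m$ is the degree of the induced circle map, and the factor rotation number of $f$ is $m\alpha$. You may not assume it equals $\alpha$, so your formula $\psi(x)+\phi(s)$ should read $m\psi(x)+\phi(s)$.

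\textbf{Second gap: the final contradiction.} Your third step does not reach one. The rotation-vector route is unavailable anyway, since the rotation vector of $f$ is not assumed to be $\rho$. Once the rigidity is known, a counting argument suffices.

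Let $D$ be the union of the two blown-up orbits and put $B=\{m\theta+c_0:\theta\in D\}$. For $t\notin B$, the set $c^{-1}(t)$ consists of $|m|$ vertical circles $\{x_\theta\}\times\T^1$ with $x_\theta\in K$. Hence $h^{-1}(t)=q(c^{-1}(t))\subset q(K\times\T^1)=M$. The omitted fibres are countably many closed sets with empty interior, so the remaining fibres are dense, and $M=\T^2$.

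Thus $q(K\times\{s\})=\T^2$ for a fixed $s$. But $K\times\{s\}$ meets $c^{-1}(t)$ in only $|m|$ points. So $h^{-1}(t)$ is finite, contradicting Lemma~\ref{prop:canonical-fibres}. Your observation that $q$ maps the wandering annuli onto a set containing $\T^2\setminus M$ leads nowhere, because that set turns out to be empty.
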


\begin{proof}
Recall that $g(x,s)=(f_0(x),s+u(x))$, $h_0(x,s)=\psi(x)$, and the minimal sets of $g$ are $K\times\{s\}$.
We first show that every continuous $g$-invariant map $A:\T^2\to\T^1$ is constant. On $K\times\{s\}$, minimality
gives $A(x,s)=a(s)$. For $x\in\operatorname{int}I^0$, all backward iterates have second coordinate $s$, and all
positive iterates have second coordinate $s+u(x)$: the other intervals in this wandering orbit carry integer values
of $u$. These iterates approach $K\times\T^1$, so invariance and continuity give $a(s)=a(s+u(x))$.
By continuity this also holds at the endpoints of $I^0$. The values of $u$ on $I^0$ cover an interval of length one,
so $a$ is constant. Every orbit outside $K\times\T^1$ accumulates on that set, so $A$ is constant on the whole torus.

Suppose that $q$ exists, write $h\circ f=R_\gamma\circ h$ with $\gamma$ irrational, and put $H=h\circ q$.
Fix $s_0\in\T^1$. The endpoints of each complementary interval of $K$ are forward asymptotic under $f_0$.
Since rotations preserve distance, $H(\cdot,s_0)$ takes the same value at each such pair. It therefore descends
through $\psi|_K$ to a continuous map $\chi:\T^1\to\T^1$ satisfying $\chi\circ R_\alpha=R_\gamma\circ\chi$.
If $m$ is its degree, write a lift as $\widetilde\chi(t)=mt+v(t)$ with $v$ periodic. The factor identity makes
$v(t+\alpha)-v(t)$ constant. Boundedness forces this constant to vanish; irrationality then makes $v$ constant.
Hence $\gamma=m\alpha\pmod1$ and $m\neq0$. The map $H-mh_0$ is now $g$-invariant, so the preceding observation gives
\begin{equation}\label{eq:denjoy-factor-form}
 h\circ q=mh_0+c\qquad\text{for some }c\in\T^1.
\end{equation}

Let $D\subset\T^1$ be the union of the two blown-up rotation orbits and put $B=\{m\theta+c:\theta\in D\}$.
For $t\notin B$, surjectivity of $q$ and \eqref{eq:denjoy-factor-form} give
\begin{equation}\label{eq:denjoy-target-fibre}
 h^{-1}(t)=\bigcup_{m\theta+c=t}q\bigl(h_0^{-1}(\theta)\bigr).
\end{equation}
Each source fibre in this union is a circle. Its continuous image is locally connected, and a finite union of
compact locally connected sets is locally connected. Thus every target fibre outside the countable set $B$
is locally connected. Lemma~\ref{prop:canonical-fibres} and Corollary~\ref{cor:locally-connected} give a unique
minimal set $M$ for $f$.

Every set $q(K\times\{s\})$ is minimal, so equals $M$. Hence $q(K\times\T^1)=M$.
For $\theta\notin D$, the source fibre $h_0^{-1}(\theta)$ lies in $K\times\T^1$. Thus
\eqref{eq:denjoy-target-fibre} places every target fibre outside $B$ in $M$. Their union is dense, since the
omitted fibres are countably many closed sets with empty interior. Therefore $M=\T^2$.

Fix $s\in\T^1$ and $t\notin B$. The restriction of $q$ to $K\times\{s\}$ is onto $\T^2$.
There are exactly $|m|$ solutions of $m\theta+c=t$, and each has a unique preimage $x_\theta$ under $\psi$.
By \eqref{eq:denjoy-factor-form}, every point of $h^{-1}(t)$ is therefore one of the points $q(x_\theta,s)$.
This makes $h^{-1}(t)$ finite, contradicting Lemma~\ref{prop:canonical-fibres}.
\end{proof}

\bibliographystyle{alpha}
\bibliography{minimal}

@misc{AvilaPrivateCommunication,
  author = {Avila, Artur},
  note   = {Private communication}
}

@article{AddasZanataTalGarcia2014,
  author  = {Addas-Zanata, Salvador and Tal, F{\'a}bio A. and Garcia, Br{\'a}ulio A.},
  title   = {Dynamics of homeomorphisms of the torus homotopic to {Dehn} twists},
  journal = {Ergodic Theory and Dynamical Systems},
  volume  = {34},
  number  = {2},
  pages   = {409--422},
  year    = {2014},
  doi     = {10.1017/etds.2012.156}
}

@article{HammerlindlPotrie2014,
  author  = {Hammerlindl, Andy and Potrie, Rafael},
  title   = {Pointwise partial hyperbolicity in three-dimensional nilmanifolds},
  journal = {Journal of the London Mathematical Society},
  series  = {2},
  volume  = {89},
  number  = {3},
  pages   = {853--875},
  year    = {2014},
  doi     = {10.1112/jlms/jdu013}
}

@article{BeguinCrovisierJagerLeRoux2009,
  author  = {B{\'e}guin, Fran{\c c}ois and Crovisier, Sylvain and
             J{\"a}ger, Tobias and Le Roux, Fr{\'e}d{\'e}ric},
  title   = {Denjoy constructions for fibered homeomorphisms of the torus},
  journal = {Transactions of the American Mathematical Society},
  volume  = {361},
  number  = {11},
  pages   = {5851--5883},
  year    = {2009},
  doi     = {10.1090/S0002-9947-09-04914-9}
}

@incollection{BeguinCrovisierJager2017,
  author    = {B{\'e}guin, Fran{\c c}ois and Crovisier, Sylvain and
               J{\"a}ger, Tobias},
  title     = {A dynamical decomposition of the torus into pseudo-circles},
  booktitle = {Modern Theory of Dynamical Systems: A Tribute to
               Dmitry Victorovich Anosov},
  series    = {Contemporary Mathematics},
  volume    = {692},
  pages     = {39--50},
  publisher = {American Mathematical Society},
  address   = {Providence, RI},
  year      = {2017},
  doi       = {10.1090/conm/692/13915}
}

@article{JagerKeller2006,
  author  = {J{\"a}ger, Tobias H. and Keller, Gerhard},
  title   = {The {Denjoy} type of argument for quasiperiodically forced
             circle diffeomorphisms},
  journal = {Ergodic Theory and Dynamical Systems},
  volume  = {26},
  number  = {2},
  pages   = {447--465},
  year    = {2006},
  doi     = {10.1017/S0143385705000477}
}

@article{JagerPasseggi2015,
  author  = {J{\"a}ger, Tobias and Passeggi, Alejandro},
  title   = {On torus homeomorphisms semiconjugate to irrational rotations},
  journal = {Ergodic Theory and Dynamical Systems},
  volume  = {35},
  number  = {7},
  pages   = {2114--2137},
  year    = {2015},
  doi     = {10.1017/etds.2014.23}
}

@article{JagerKwakkelPasseggi2013,
  author  = {J{\"a}ger, Tobias and Kwakkel, Ferry and Passeggi, Alejandro},
  title   = {A classification of minimal sets of torus homeomorphisms},
  journal = {Mathematische Zeitschrift},
  volume  = {274},
  number  = {1--2},
  pages   = {405--426},
  year    = {2013},
  doi     = {10.1007/s00209-012-1076-y}
}

@article{KoropeckiPasseggiSambarino2021,
  author  = {Koropecki, Andr{\'e}s and Passeggi, Alejandro and Sambarino, Mart{\'i}n},
  title   = {The {Franks--Misiurewicz} conjecture for extensions of
             irrational rotations},
  journal = {Annales scientifiques de l'\'{E}cole normale sup{\'e}rieure},
  series  = {4},
  volume  = {54},
  number  = {4},
  pages   = {1035--1049},
  year    = {2021},
  doi     = {10.24033/asens.2476}
}

@article{Potrie2012,
  author  = {Potrie, Rafael},
  title   = {Recurrence of non-resonant homeomorphisms on the torus},
  journal = {Proceedings of the American Mathematical Society},
  volume  = {140},
  number  = {11},
  pages   = {3973--3981},
  year    = {2012},
  doi     = {10.1090/S0002-9939-2012-11249-3}
}

@article{Kwakkel2011,
  author  = {Kwakkel, Ferry H.},
  title   = {Minimal sets of non-resonant torus homeomorphisms},
  journal = {Fundamenta Mathematicae},
  volume  = {211},
  number  = {1},
  pages   = {41--76},
  year    = {2011},
  doi     = {10.4064/fm211-1-3},
  note    = {Corrected version: arXiv:1002.0364v3; erratum in
             Fundamenta Mathematicae 213 (2011), 291}
}

@article{HrushovskiLoeserPoonen2014,
  author = {Hrushovski, Ehud and Loeser, Fran{\c c}ois and Poonen, Bjorn},
  title = {Berkovich spaces embed in {Euclidean} spaces},
  journal = {L'Enseignement Math{\'e}matique},
  volume = {60},
  number = {3--4},
  year = {2014},
  pages = {273--292},
  doi = {10.4171/LEM/60-3/4-4},
  eprint = {1210.6485},
  archivePrefix = {arXiv},
  primaryClass = {math.AG}
}

@article{Rempe2008,
  author = {Rempe, Lasse},
  title = {On prime ends and local connectivity},
  journal = {Bulletin of the London Mathematical Society},
  volume = {40},
  number = {5},
  pages = {817--826},
  year = {2008},
  doi = {10.1112/blms/bdn061},
  eprint = {math/0309022},
  archivePrefix = {arXiv},
  primaryClass = {math.GN}
}

@book{Pommerenke1992,
  author = {Pommerenke, Christian},
  title = {Boundary Behaviour of Conformal Maps},
  series = {Grundlehren der mathematischen Wissenschaften},
  volume = {299},
  publisher = {Springer-Verlag},
  address = {Berlin},
  year = {1992},
  doi = {10.1007/978-3-662-02770-7}
}

@article{LeRoux2014,
  author = {Le Roux, Fr{\'e}d{\'e}ric},
  title = {On closed subgroups of the group of homeomorphisms of a manifold},
  journal = {Journal de l'{\'E}cole polytechnique --- Math{\'e}matiques},
  volume = {1},
  pages = {147--159},
  year = {2014},
  doi = {10.5802/jep.7}
}

@article{Kocsard2021,
  author  = {Kocsard, Alejandro},
  title   = {Periodic point free homeomorphisms and irrational rotation
             factors},
  journal = {Ergodic Theory and Dynamical Systems},
  volume  = {41},
  number  = {10},
  pages   = {2946--2982},
  year    = {2021},
  doi     = {10.1017/etds.2020.88},
  eprint  = {1908.05746},
  archivePrefix = {arXiv},
  primaryClass = {math.DS}
}

@misc{Correa2026,
  author  = {Corr{\^e}a, Heric},
  title   = {Bounded vertical deviations and irrational circle factors in
             {Dehn Twist} classes},
  year    = {2026},
  eprint  = {2609.11627},
  archivePrefix = {arXiv},
  primaryClass = {math.DS},
  note    = {arXiv:2609.11627v1, 10 September 2026}
}

@book{Kechris1995,
  author = {Kechris, Alexander S.},
  title = {Classical Descriptive Set Theory},
  series = {Graduate Texts in Mathematics},
  volume = {156},
  publisher = {Springer-Verlag},
  address = {New York},
  year = {1995},
  doi = {10.1007/978-1-4612-4190-4}
}

@article{FayadKatok2004,
  author  = {Fayad, Bassam and Katok, Anatole},
  title   = {Constructions in elliptic dynamics},
  journal = {Ergodic Theory and Dynamical Systems},
  volume  = {24},
  number  = {5},
  pages   = {1477--1520},
  year    = {2004},
  doi     = {10.1017/S0143385703000798}
}

@article{KoropeckiLeCalvezNassiri2015,
  author  = {Koropecki, Andr{\'e}s and Le Calvez, Patrice and Nassiri, Meysam},
  title   = {Prime ends rotation numbers and periodic points},
  journal = {Duke Mathematical Journal},
  volume  = {164},
  number  = {3},
  pages   = {403--472},
  year    = {2015},
  doi     = {10.1215/00127094-2861386}
}

@incollection{Mather1982,
  author    = {Mather, John N.},
  title     = {Topological proofs of some purely topological consequences
               of {Carath\'{e}odory}'s theory of prime ends},
  booktitle = {Selected Studies: Physics-Astrophysics, Mathematics,
               History of Science},
  editor    = {Rassias, Th. M. and Rassias, G. M.},
  publisher = {North-Holland},
  address   = {Amsterdam},
  pages     = {225--255},
  year      = {1982}
}

\end{document}